\documentclass[english]{smfart}
\usepackage{amsmath, amssymb, amsthm, amscd,smfenum,xspace,mathrsfs,url,upref,verbatim}
\usepackage{dsfont}
\usepackage[all,cmtip]{xy}
\usepackage[utf8]{inputenc}
\usepackage{mathrsfs}
\usepackage{stmaryrd}
\usepackage{bbm}
\usepackage{oldgerm}
\usepackage[english]{babel}
\usepackage[T1]{fontenc}
\usepackage{graphicx}
\usepackage{mathabx}
\usepackage[colorlinks,linkcolor=red,anchorcolor=red,citecolor=blue]{hyperref}

\newtheorem{theorem}[subsection]{Theorem}
\newtheorem{proposition}[subsection]{Proposition}
\newtheorem{corollary}[subsection]{Corollary}
\newtheorem{lemma}[subsection]{Lemma}
\newtheorem{conjecture}[subsection]{Conjecture}

\theoremstyle{definition}

\newtheorem{definition}[subsection]{Definition}
\newtheorem{remark}[subsection]{Remark}

\newtheorem{example}[subsection]{Example}

\numberwithin{equation}{subsection}

\setcounter{tocdepth}{1}

\newcommand{\ol}{\overline}

\newcommand{\wt}{\widetilde}

\newcommand{\spec}{\mathrm{Spec}}

\newcommand{\gal}{\mathrm{Gal}}
\newcommand{\ch}{\mathrm{char}}
\newcommand{\cl}{\mathrm{cl}}
\newcommand{\fm}{\mathfrak m}

\newcommand{\pr}{\mathrm{pr}}
\newcommand{\id}{\mathrm{id}}

\newcommand{\dt}{\mathrm{dimtot}}
\newcommand{\rk}{\mathrm{rank}}

\newcommand{\sw}{\mathrm{sw}}

\newcommand{\rt}{\mathrm{t}}

\newcommand{\red}{\mathrm{red}}
\newcommand{\SW}{\mathrm{SW}}

\newcommand{\sO}{\mathscr{O}}

\newcommand{\bZ}{\mathbb Z}
\newcommand{\sF}{\mathscr F}
\newcommand{\bA}{\mathbb A}
\newcommand{\bQ}{\mathbb Q}
\newcommand{\bT}{\mathbb T}

\newcommand{\bF}{\mathbb F}
\newcommand{\sG}{\mathscr G}

\newcommand{\sN}{\mathscr{N}}

\newcommand{\sK}{\mathscr K}

\newcommand{\cA}{\mathcal A}

\DeclareMathOperator{\Sw}{Sw}
\DeclareMathOperator{\Max}{Max}
\DeclareMathOperator{\Sl}{Sl}
\DeclareMathOperator{\rank}{rank}

\DeclareMathOperator{\DT}{DT}

\newcommand{\aut}{\mathrm{Aut}}

\usepackage{xcolor}
\usepackage[english]{babel}

\title[Characteristic cycle and ramification]{Characteristic cycle and wild ramification for nearby cycles of \'etale sheaves}

\usepackage[english]{babel}

\begin{document}

\author[H.Hu]{Haoyu Hu}
\address{Department of Mathematics, Nanjing University, Hankou Road 22, Nanjing 210000, China}
\email{huhaoyu@nju.edu.cn, huhaoyu1987@gmail.com}

\author[J.-B. Teyssier]{Jean-Baptiste Teyssier}
\address{Institut de Math\'ematiques de Jussieu, 4 place Jussieu, Paris, France}
\email{jean-baptiste.teyssier@imj-prg.fr}

\begin{abstract}
In this article, we give a bound for the wild ramification of the monodromy action on the nearby cycles complex of a locally constant \'etale sheaf on the generic fiber of a smooth scheme over an equal characteristic trait  in terms of Abbes and Saito's logarithmic ramification filtration. This provides a positive answer to the main conjecture in \cite{Leal} for smooth morphisms in equal characteristic. We also study the ramification along vertical divisors of \'etale sheaves on relative curves and abelian schemes over a trait.\\


\end{abstract}
\maketitle

The main topic of this article is the wild ramification of the monodromy action on the nearby cycles complex of an \'etale locally constant sheaf defined on the generic fiber of a smooth family of schemes over a trait.

Let $S$ be an henselian trait with a perfect residue field of characteristic $p>0$. Let $s$ be the closed point of $S$. Let $\bar s$ be a geometric closed point of $S$. Let $\eta$ be the generic point of $S$. Let $\bar\eta$ be a geometric generic point of $S$. Let $G$ be the Galois group of $\bar\eta$ over $\eta$. Let $X$ be a scheme and let $f:X\longrightarrow
S$ be a morphism of finite type.  Let $\ell\neq p$  be a prime number. Let $\Lambda$ be a finite field of characteristic $\ell$. Let $\sK
$ be a bounded below complex of \'etale sheaves of $\Lambda$-modules on $X$. Introduced by Grothendieck in the 1960s, the nearby cycles complex $R\Psi(\sK,f)$ and the vanishing cycles
complex $R\Phi(\sK,f)$ of $\sK$ relative to $f$ are complexes  on the geometric special fiber $X_{\bar s}$ of $f$ endowed with a $G$-action, called the \textit{monodromy action}.

The monodromy action on cohomological objects (such as nearby cycles complex, vanishing cycles complex as well as \'etale cohomological groups of the geometric generic fiber) is a central theme in arithmetic geometry. Major contributions include Grothendieck's proof of the local monodromy theorem \cite[XI]{SGA7}, Deligne's Milnor formula computing the total dimension of the vanishing cycles of the constant sheaf at an isolated critical point of a morphism to a curve \cite[XVI]{SGA7}, good and semi-stable reduction criteria for abelian varieties \cite{ST} \cite[IX]{SGA7}, and Saito's proof of the semi-stable reduction theorem for curves \cite{saito87}. Under the semi-stable condition, the tameness of the monodromy action on the nearby cycles complex of the constant sheaf was investigated in \cite{RZ} and \cite{illdwork}.  See also \cite{IllusieMonodromyLocal} and \cite{IllusieNearbyCycles} for a survey. For arbitrary schemes over a trait, the wild ramification of the monodromy action is involved. Bloch's conductor formula \cite{bloch} provides a geometric interpretation for the alternating sum of the Swan conductors of the cohomology groups of the constant sheaf. In this direction, progress to prove this formula were made (among many other works) in  \cite{Abbes} and \cite{KS05}, and generalizations to arbitrary \'etale sheaves were obtained for exemple in \cite{KS13} and \cite{SaitoDirectimage}.

However, very little is known on the wild ramification for the monodromy action on each \textit{individual} cohomology group of an \textit{arbitrary} $\ell$-adic sheaf. In this direction, the following conjecture was formulated in \cite{Leal}.

\begin{conjecture}\label{conjLeal}
 Let $(X,Z)$ be a proper semi-stable pair\footnote{See definition \ref{defsemistable} for details.} over $S$. Let $\sF$ be a locally constant and
constructible sheaf of $\Lambda$-modules on $U:=X\setminus Z$. Suppose that $\sF$ is tamely ramified  along the horizontal part of $Z$. Let $r_{\log}$ be the maximum of the set of Abbes and Saito's logarithmic slopes of $\sF$ at the generic points of the special fiber of $X$.

Then, for every $r>r_{\log}$, the $r$-th upper numbering ramification subgroup of $G$ acts trivially on $H^i_c(U_{\overline{\eta}},\sF|_{U_{\overline{\eta}}})$ for every $i\in \mathbb{Z}_{\geq 0}$.
\end{conjecture}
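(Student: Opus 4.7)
The plan is to reduce the global vanishing statement for $H^i_c$ to the local bound on stalks of the nearby cycles complex announced in the abstract, namely a bound for the wild ramification of the monodromy in terms of Abbes-Saito's logarithmic filtration.

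Let $j : U \hookrightarrow X$ denote the open immersion and set $\sG := R\Psi(j_!\sF, f)$, a constructible complex of $\Lambda$-modules on $X_{\bar s}$ carrying a continuous $G$-action. Since $f$ is proper, proper base change yields a $G$-equivariant isomorphism
\[
H^i_c(U_{\bar\eta}, \sF|_{U_{\bar\eta}}) \simeq H^i(X_{\bar s}, \sG)
\]
for every $i$. A standard argument based on the hypercohomology spectral sequence and on a stratification of $X_{\bar s}$ over which the cohomology sheaves of $\sG$ become locally constant then reduces the conjecture to proving: for every geometric point $\bar x$ of $X_{\bar s}$ and every $r > r_{\log}$, the subgroup $G^r_{\log}$ acts trivially on the stalk $\sG_{\bar x}$. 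Since this stalk is the cohomology of the Milnor fiber $(X_{(\bar x)})_{\bar\eta}$ with coefficients in $\sF$, the global conjecture is reduced to a purely local statement at each closed point of $X_{\bar s}$.

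I would then invoke the paper's main local bound. Near a closed point $\bar x$ the semi-stable geometry provides an \'etale local model of $(X,Z)$ over $S$; the tame-along-horizontal hypothesis on $\sF$ confines the wild ramification to the direction of the special fiber. Using Beilinson's singular support and Saito's characteristic cycle applied to an appropriate extension of $j_!\sF$ across $X$, together with their compatibility with nearby cycles, one translates the log slope of $\sF$ at a generic point of $X_{\bar s}$ into a control on the ramification of the monodromy action on the Milnor fiber cohomology. Taking the maximum over all generic points of $X_{\bar s}$ then yields the bound $r_{\log}$.

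The hard part is this local translation itself: from the Abbes-Saito log slope of $\sF$ at a generic point of $X_{\bar s}$ to a bound on the logarithmic ramification of the $G$-action on the Milnor fiber cohomology. The natural approach is to realize the log slope as a slope in a characteristic cycle computation in the cotangent bundle of $X$, and then to read off the ramification of the $G$-action via a Milnor-type formula for nearby cycles along the section of $T^*X$ defined by $df$. Executing this in the smooth equal characteristic case is the scope of the paper; extending it to the genuinely semi-stable case is considerably more delicate because of the singularities of $X$ along the irreducible components of $X_{\bar s}$, which is presumably why the authors restrict their positive answer to Leal's conjecture to smooth morphisms.
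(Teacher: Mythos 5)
Your reduction of the conjecture to the local bound on the nearby cycles complex coincides with the paper's own argument: proper base change and the $G$-equivariant spectral sequence $E_2^{ab} = H^a(X_{\bar s}, R^b\Psi(j_!\sF, f)) \Rightarrow H^{a+b}_c(U_{\bar\eta},\sF|_{U_{\bar\eta}})$ reduce Conjecture \ref{conjLeal} to Theorem \ref{genleal}, and both you and the paper establish the latter only when $f$ is smooth and $S$ is equal characteristic. (The stratification of $X_{\bar s}$ you invoke is superfluous: $G^r$ acts by automorphisms of the sheaf $R^b\Psi$, so stalkwise triviality of the action already gives triviality on $H^a(X_{\bar s}, R^b\Psi)$.) One caveat about your sketch of the local step: the Milnor-type formula pairing $CC(j_!\sF)$ with $[df]$ returns total dimensions of vanishing cycles, not the logarithmic slopes of the monodromy, so it cannot directly \emph{read off} the ramification of the $G$-action. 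The paper instead characterizes nearby slopes of $R\Psi$ as logarithmic slopes (corollary \ref{tncls}), twists $\sF$ by $f^*\sN$ for an isoclinic $\sN$ on $\eta$ of logarithmic slope exceeding $r_{\log}$, uses semi-continuity of the largest slope (proposition \ref{slope'}) to compute $SS$ and $CC$ of the twist (Theorem \ref{sameCC}), and deduces $Rj_* = j_!$ for the twist (corollary \ref{!=R*}), whence the vanishing of the relevant tame nearby cycles and the bound; the horizontal part of $Z$ is then handled by a d\'evissage resting on proposition \ref{logslopehd}.
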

This conjecture was proved by Leal in \cite{Leal} under the assumption that $S$ is equal characteristic, that $f:X\longrightarrow S$ has relative dimension $1$ and that $\sF$ has rank $1$. The main goal of this article is to prove a local version of this conjecture  for arbitration relative dimension and arbitrary rank sheaf in the good reduction case. The main result is the following (Theorem \ref{genleal}):

\begin{theorem}\label{introgenleal}
Suppose that $S$ is the henselization at a closed point of a smooth curve over a perfect field of characteristic $p>0$. Let $(X,Z)$ be a semi-stable pair over $S$ such that $f : X\longrightarrow S$ is smooth. Let $U:=X\setminus Z$ and $j:U\longrightarrow X$ the canonical injection.
Let $\sF$ be a locally constant and
constructible sheaf of $\Lambda$-modules on $U$. Suppose that $\sF$ is tamely ramified  along the horizontal part of $Z$. Let $r_{\log}$ be the maximum of the set of Abbes and Saito's logarithmic slopes of $\sF$ at the generic points of the special fiber of $X$.

Then, for every $r>r_{\log}$, the $r$-th upper numbering ramification subgroup of $G$ acts trivially on $R^i\Psi(j_{!}\sF,f)$ for every $i\in \mathbb Z_{\geq 0}$.
\end{theorem}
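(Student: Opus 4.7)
The plan is to reduce the statement to a local computation of stalks of $R\Psi(j_!\sF,f)$ at geometric points of $X_{\bar s}$, and to control each stalk via the characteristic cycle of $j_!\sF$. Since the $G$-action on the constructible complex $R\Psi(j_!\sF, f)$ is compatible with restriction to geometric stalks, triviality of the action of the $r$-th upper numbering ramification subgroup can be tested pointwise. Hence it suffices to prove, for every geometric point $\bar x \in X_{\bar s}$, that this subgroup acts trivially on the stalk $R\Psi(j_!\sF, f)_{\bar x}$, which is $R\Gamma$ of $j_! \sF$ on the Milnor fiber $X_{(\bar x)}\times_{S}\bar\eta$, where $X_{(\bar x)}$ denotes the strict henselization of $X$ at $\bar x$.

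First, using the smoothness of $f$ and the semi-stability of $(X,Z)$, I would pick étale local coordinates at $\bar x$ in which the horizontal part of $Z$ is a union of coordinate hyperplanes and a uniformizer of $S$ pulls back to a single coordinate of $X$. Semi-stability together with $f$ smooth implies that $X_{\bar s}$ is smooth and meets the horizontal part transversally. Since $\sF$ is tame along the horizontal part of $Z$, the problem reduces to bounding the wild ramification of $\sF$ along the vertical component $X_{\bar s}$ alone, from which one must produce a bound on the $G$-action on the nearby cycles stalks.

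Second, I would invoke the paper's central theme, suggested by its title: the comparison between Abbes--Saito's logarithmic slopes of $\sF$ along a divisor and the characteristic cycle of $j_!\sF$ inside $T^{*}X$. At each generic point $\xi$ of $X_{\bar s}$, the hypothesis $r > r_{\log}$ translates into a bound on the coefficients of the characteristic cycle of $j_!\sF$ in a neighbourhood of $\xi$. A stability argument for the characteristic cycle under restriction and specialization should extend this bound from the generic points of $X_{\bar s}$ to every point of $X_{\bar s}$. Finally, a Milnor-type formula relating $R\Psi(j_!\sF,f)$ to the intersection of the characteristic cycle of $j_!\sF$ with the section $df$ converts this bound on the characteristic cycle into the required bound on the $G$-action on each $R^{i}\Psi(j_!\sF,f)_{\bar x}$.

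The main obstacle is the propagation from generic points of $X_{\bar s}$ to arbitrary points: log slopes are not upper semi-continuous in positive characteristic, so a naive pointwise extension is not available. The characteristic cycle, being a globally defined conic Lagrangian cycle in $T^{*}X$, plays the role of the correct interpolating invariant, but one must carefully control the components of its support that may appear in higher codimension on $X_{\bar s}$ and verify that they do not degrade the bound on the monodromy. A secondary subtlety is to ensure, using the smoothness of $f$, that the covector $df$ is sufficiently transversal to the singular support of $j_!\sF$ along $X_{\bar s}$ for the Milnor-type formula to be effective without correction terms; this transversality is where the smooth (rather than merely semi-stable) hypothesis on $f$ really enters.
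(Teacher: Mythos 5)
Your outline identifies some of the right ingredients (characteristic cycle, smoothness of $f$ giving smoothness of $X_s$), but the core mechanism you propose does not work, and a key reduction is missing.

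The decisive gap is in your final step. Saito's Milnor-type formula relates the intersection number $(CC(j_!\sF),[df])_{\bT^*X,x}$ to the quantity $-\sum_i (-1)^i \dt(R^i\Phi_{\bar x}(j_!\sF,f))$, i.e.\ to an \emph{alternating sum} of \emph{total dimensions}. Total dimension is $\dim + \sw$, and $\sw$ is a weighted sum of logarithmic slopes, so at best this gives one numerical identity involving a conductor-type aggregate over all cohomology degrees. What the theorem demands is a bound on the \emph{largest logarithmic slope} of each \emph{individual} $R^i\Psi(j_!\sF,f)_{\bar x}$. A single alternating-sum identity cannot produce such a bound: the slope structure of the monodromy action is strictly finer information than the Euler characteristic-type quantity that the index formula computes. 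Your phrase ``converts this bound on the characteristic cycle into the required bound on the $G$-action'' names exactly the step that has no proof behind it. The paper does not extract slope information from the Milnor formula at all.

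What actually makes the argument go is a \emph{twisting detection mechanism} that your proposal does not mention. One first shows (Corollary \ref{tncls}) that $r$ is a logarithmic slope of some $R^i\Psi_{\bar x}(\sF,f)$ if and only if, for some isoclinic sheaf $\sN$ on $\eta$ of logarithmic slope $r$, the \emph{tame} nearby cycles $R\Psi^{\rt}_{\bar x}(\sF\otimes f^*\sN,f)$ do not vanish. So the theorem reduces to proving $R\Psi^{\rt}(\sF\otimes f^*\sN,f)=0$ whenever $\sN$ has slope $>r_{\log}$. The characteristic cycle (more precisely, the \emph{singular support}) then enters in a completely different way than in your sketch: one proves (Theorem \ref{sameCC}) that $SS(j_!(\sF\otimes f^*\sN)) = \bT^*_XX\cup\bT^*_D X$, using the Deligne--Laumon semi-continuity of Swan conductors and the semi-continuity of the largest slope from Section~3; this transversality to curves transverse to $D$ is precisely what yields $j_!(\sF\otimes f^*\sN)=Rj_*(\sF\otimes f^*\sN)$ (Corollary \ref{!=R*}), hence the vanishing of tame nearby cycles after passing to prime-to-$p$ covers. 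Finally, you omit the handling of the horizontal part of $Z$ altogether: the tame ramification along $Z_f$ is removed by Abhyankar's lemma, and an induction on the number of horizontal components, driven by the semi-continuity of the largest logarithmic slope under restriction to smooth subschemes (Proposition \ref{logslopehd}), reduces to the purely vertical case. Without the twist-and-detect reduction and the horizontal induction, the strategy cannot be completed.
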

Theorem \ref{introgenleal} says in particular that the slopes of the nearby cycles complex can be bounded in a way depending only on $\sF$ but not on the smooth morphism $f$. This is precisely the content of the boundedness question asked in Question $2$ from \cite{tey}. Due to the $G$-equivariant spectral sequence
\begin{equation*}
E^{ab}_2=H^a(X_s, R^b\Psi(j_!\sF,f))\Longrightarrow H^{a+b}_c(U_{\bar\eta},\sF|_{U_{\bar\eta}}).
\end{equation*}
Theorem \ref{introgenleal} implies that Conjecture \ref{conjLeal} is valid when $f:X\longrightarrow S$ is smooth and $S$ is equal characteristic.

Let us now comment on our approach to Conjecture \ref{conjLeal}. Leal's proof is global. It relies on Kato and Saito's conductor formula \cite{KS13} and Kato's formula \cite{Kato} for the Swan class of a clean rank $1$ sheaf. By contrast, our proof is local. The tools are the \textit{singular support} and the \textit{characteristic cycle} for constructible \'etale sheaves constructed by Beilinson  \cite{bei} and Saito \cite{cc}, respectively, as well as
the semi-continuity properties for Abbes and Saito's ramification invariants following \cite{HY17,Hu}.  The strategy to prove Theorem \ref{introgenleal} has three steps. The first one is to reduce the question to the vanishing of the tame nearby cycles of $j_!(\sF\otimes_{\Lambda}f^*\sN)$ for every isoclinic sheaf of $\Lambda$-modules $\sN$ on $\eta$ with slope strictly bigger than $r_{\log}$. From this point on, we descend to the case where $S$ is a smooth curve. In the second step, when $Z$ is the special fiber of $f$, we prove that for $\sN$ as above, the singular support of $j_!(\sF\otimes_{\Lambda}f^*\sN)$ is supported on the zero section of $\bT^*X$ and the conormal bundle of $Z$ in $X$. In particular, any smooth curve on $X$ transverse to $Z$ is $SS(j_!(\sF\otimes_{\Lambda}f^*\sN))$-transversal. This allows us to reduce the vanishing of the above tame nearby cycles to the curve case, where it is obvious.
The semi-continuity of the largest slopes is the key point in this step. In the last step, we apply the semi-continuity of the largest logarithmic slopes to deal with the general case where $Z$ has some horizontal components.\\ \indent

To describe an application of the above result, we modify our notations. Let $S$ be a smooth curve over a perfect field of characteristic $p>0$. Let $f:X\longrightarrow S$ be a proper and smooth morphism with geometric connected fibers. Let $s$ be a closed point of $S$, $U$ the complement of the fiber $X_s$ in $X$ and $j:U\longrightarrow X$ the canonical injection. Let $\sF$ be a locally constant and constructible sheaf of $\Lambda$-modules on $U$. Using the main Theorem \ref{introgenleal}, we obtain the following boundedness result (Theorem \ref{bounded}):

\begin{corollary}\label{boundedintroduction}
Assume that $f:X\longrightarrow S$ has relative dimension $1$ and genus $g>1$. Let $Z(\sF)$ be the set of closed points of $X_s$ such that $\mathbb T^*_xX$ is contained in $SS(j_!\sF)$. Then, we have
\begin{equation}
\sharp Z(\sF) \leq (2g-2)\cdot \rk_{\Lambda}\sF+2g\cdot \rk_{\Lambda}\sF\cdot  r_{\log}(X_s,\sF),
\end{equation}
where $r_{\log}(X_s,\sF)$ denotes the largest Abbes-Saito's logarithmic slope of $\sF$ at the generic point of $X_s$.
\end{corollary}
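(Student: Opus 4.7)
The plan is to bound $\sharp Z(\sF)$ by $\sum_{x\in Z(\sF)}m_x$, where $m_x\geq 1$ denotes the multiplicity of $[\mathbb{T}^*_xX]$ in $CC(j_!\sF)$. First I would describe the structure of $SS(j_!\sF)$: since $\sF|_U$ is locally constant constructible and $j_!\sF|_{X_s}=0$, Beilinson's isotropy of $SS$ forces every irreducible component of $SS(j_!\sF)$ lying above $X_s$ to be a conic Lagrangian of $\mathbb{T}^*X$. On a smooth curve inside a surface, the only such Lagrangians are the conormal bundle $\mathbb{T}^*_{X_s}X$ and the cotangent fibers $\mathbb{T}^*_xX$. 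Hence
\[CC(j_!\sF)=\rk(\sF)[0_X]+m_0[\mathbb{T}^*_{X_s}X]+\sum_{x\in Z(\sF)}m_x[\mathbb{T}^*_xX]\]
with $m_x\geq 1$, which gives the reduction $\sharp Z(\sF)\leq\sum m_x$.

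Next I would control the conormal multiplicity using Saito's description of $CC$ along smooth divisors, which gives $m_0$ essentially equal to $\rk(\sF)+\sw_{\eta_{X_s}}\sF$. Combined with the hypothesis on log slopes, this yields $m_0\leq\rk(\sF)(1+r_{\log}(X_s,\sF))$.

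The crucial step is an index/conductor formula for the proper smooth morphism $f:X\to S$ relating $\sum m_x$ to $m_0$ and the wild ramification at $s$ of $Rf_*j_!\sF$. Using the intersection numbers $([\mathbb{T}^*_{X_s}X]\cdot[0_X])_{\mathbb{T}^*X}=-\chi(X_s)=2g-2$ (by excess intersection, since $X_s$ has genus $g$) and $([\mathbb{T}^*_xX]\cdot[0_X])_{\mathbb{T}^*X}=1$, combined with Saito's conductor formula for the proper pushforward of characteristic cycles, one obtains an inequality of the form $\sum m_x\leq 2g\cdot m_0-2\rk(\sF)$. Here Theorem~\ref{introgenleal}, applied to the smooth family $f$, bounds the log slopes of the $R^if_*j_!\sF$ by $r_{\log}$, and this together with the Euler--Poincar\'e bound $\dim H^1(X_{\bar\eta},\sF)\leq 2g\rk(\sF)$ on the proper smooth curve $X_{\bar\eta}$ of genus $g$ controls the Artin conductor contribution. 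Substituting the bound on $m_0$ then produces
\[\sum m_x\leq 2g\rk(\sF)(1+r_{\log})-2\rk(\sF)=(2g-2)\rk(\sF)+2g\rk(\sF)\cdot r_{\log}(X_s,\sF),\]
as required. The main obstacle will be the precise conductor identity yielding the linear inequality $\sum m_x\leq 2g\cdot m_0-2\rk(\sF)$: the coefficients $2g$ (the rank of $H^1(X_s,\Lambda)$) and $2$ (the rank of $H^0$ plus $H^2$) must emerge from careful bookkeeping in Saito's proper pushforward formula, combined with the slope bound provided by the main Theorem.
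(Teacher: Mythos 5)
Your overall strategy — conductor formula for the proper pushforward, Theorem~\ref{introgenleal} plus the Euler--Poincar\'e bound $\dim H^1(X_{\bar\eta},\sF)\le 2g\rk_\Lambda\sF$ to control the Swan term, and intersection theory in $\bT^*X$ — is the same as the paper's, and the intersection number $\big([\bT^*_{X_s}X]\cdot[\text{section}]\big)=2g-2$ is correct. But two of your intermediate steps fail in positive characteristic.

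First, the structural claim that every irreducible component of $SS(j_!\sF)$ lying over $X_s$ is either $\bT^*_{X_s}X$ or a cotangent fiber $\bT^*_xX$ is a characteristic $0$ statement. In characteristic $p$, $SS$ is equidimensional but not isotropic, and Saito's formula \eqref{CCnondeg} shows that $SS(j_!\sF)$ over $X_s$ generically consists of closures $\overline{L}_\chi$ of lines determined by the characteristic forms; these dominate $X_s$ and are usually neither the conormal bundle nor cotangent fibers. Your proposed decomposition $CC(j_!\sF)=\rk(\sF)[0_X]+m_0[\bT^*_{X_s}X]+\sum_{x}m_x[\bT^*_xX]$ therefore omits terms. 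The paper instead writes $CC(j_!\sF)=\rk_\Lambda\sF[\bT^*_XX]+\alpha[\bT^*_{X_s}X]+\sum_Z\beta_Z[Z]$ over \emph{all} integral $2$-dimensional conical $Z$, and only needs $\big([Z],[df]\big)\ge\sharp(Z\cap df)_{\red}$ together with positivity $\beta_Z>0$ to conclude $\sum_Z\beta_Z([Z],[df])\ge\sharp Z(\sF)$. Relatedly, $m_0$ is not $\rk\sF+\sw_{\eta_{X_s}}\sF$: the coefficient of $\bT^*_{X_s}X$ equals $\dim M^{(1)}$ plus only the contributions of the $\chi$ whose characteristic line is conormal, so $\dim M^{(1)}\le m_0\le\dt_{X_s}\sF$, and $m_0$ can well be $0$.

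Second, the ``crucial'' inequality $\sum m_x\le 2g\cdot m_0-2\rk_\Lambda\sF$ cannot be derived and is false in general: for $\sF$ totally wild along $X_s$ with non-degenerate ramification whose characteristic forms are all non-conormal, one has $m_0=0$ and $\sum m_x=0$ (so $Z(\sF)=\emptyset$), and your inequality would read $0\le -2\rk_\Lambda\sF$. The paper never bounds $m_0$ from above; it keeps the raw conductor identity
\[(2g-2)\rk_\Lambda\sF-\sw_s(Rf_{V*}\sF)=(2g-2)\alpha+\sum_Z\beta_Z\big([Z],[df]\big),\]
uses $\alpha\ge 0$ and $\sum_Z\beta_Z([Z],[df])\ge\sharp Z(\sF)$ to get $\sharp Z(\sF)\le (2g-2)\rk_\Lambda\sF-\sw_s(Rf_{V*}\sF)$, and then the one-sided bound $-\sw_s(Rf_{V*}\sF)\le\sw_s(R^1f_{V*}\sF)\le 2g\rk_\Lambda\sF\cdot r_{\log}$ coming from Theorem~\ref{introgenleal} and Euler--Poincar\'e. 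Note that a nonzero $\alpha$ only helps this chain; there is no need, and no valid way, to convert the bound into a statement in terms of $m_0$ alone.
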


If $g=1$, we obtain $Z(\sF)=\emptyset$ by the following theorem due to T. Saito, which proves the non-degeneracy of the ramification of $\sF$ along $X_s$ when $f:X\longrightarrow S$ is an abelian scheme (Theorem \ref{mainprop}):

\begin{theorem}[T. Saito]\label{mainpropintro}
Assume that $X$ is an abelian scheme over $S$. Then, the ramification of $\sF$ along the divisor $X_s$ of $X$ is non-degenerate. In particular, for any closed point $x\in X_s$, the fiber of the singular support  $SS(j_!\sF)\longrightarrow X$ above $x$ is a union of lines in $\mathbb \bT^*_x X$.
\end{theorem}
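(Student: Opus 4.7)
The plan is to exploit two structural features of the abelian scheme $f:X\to S$: the transitive action of the group $X_{\bar s}(\bar k)$ on the geometric special fiber by translation, and the triviality of the cotangent sheaf $\Omega^1_{X/S}$ coming from invariant differentials. Non-degeneracy of the ramification of $\sF$ along $X_s$ is a local condition at each closed point $x\in X_s$ on the refined Swan conductor of $\sF$, and once it is established the ``union of lines'' description of the singular support follows directly from Saito's structure theorem for sheaves with non-degenerate ramification along a smooth divisor.

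The first step is to reduce the verification of non-degeneracy to a single point of $X_s$, say its generic point $\xi$. For any geometric point $t$ of $X_{\bar s}$, translation by $t$ extends (after an appropriate \'etale base change) to an automorphism $\tau_t$ of $X$ preserving $X_s$. Combining the functoriality of Abbes--Saito's ramification filtration under such automorphisms with the semi-continuity of the largest logarithmic slope from \cite{HY17,Hu}, the ramification data of $\sF$ at different points of $X_s$ are linked by translation, so that non-degeneracy at $\xi$ propagates to every closed point of $X_s$.

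The second step is to verify non-degeneracy at $\xi$ itself. Here I would use that $\Omega^1_{X/S}$ is trivializable via translation-invariant differential forms on the abelian scheme, yielding a canonical splitting
\begin{equation*}
\Omega^1_X \simeq f^*\Omega^1_S \oplus \Omega^1_{X/S}.
\end{equation*}
A local equation $\pi$ for $X_s$ can be chosen as the pullback of a uniformizer of $S$ at $s$, so $d\log \pi$ corresponds to the direction $f^*\Omega^1_S(\log s)$. The refined Swan conductor of $\sF$ at $\xi$, which records the wild ramification transverse to $X_s$, must then have a non-trivial component in this direction, giving non-degeneracy at $\xi$.

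The hard part will be making the first step rigorous: since $\sF$ itself need not be translation-invariant, one cannot directly transport the ramification data from $\xi$ to another point via $\tau_t^*$. The correct formulation is to compare the refined Swan conductors of $\sF$ and $\tau_t^*\sF$ using the compatibility of Saito's characteristic cycle with pullback along the \'etale automorphism $\tau_t$, together with the fact that both $X_s$ and $\bT^*X$ are globally translation-equivariant, so that non-degeneracy becomes a condition on a quotient of $\bT^*X|_x$ that transforms covariantly under translations.
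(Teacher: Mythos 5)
Your proposal has a genuine gap at its core, and also misstates the nature of the condition to be verified.

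First, ``non-degeneracy at the generic point $\xi$'' is not the thing to prove: non-degeneracy is automatic at $\xi$ by definition. The content of the condition is that the ramification of $\sF$ at each \emph{closed} point of $X_s$ is controlled by the ramification at $\xi$ --- equivalently, that the dense open subset $W\subset X_s$ (guaranteed by \cite[Lemma 3.2]{wr}) where the condition holds is in fact all of $X_s$. Your second step, which proposes to establish non-degeneracy at $\xi$ by showing the refined Swan conductor has a non-zero component in the $f^*\Omega^1_S$-direction, is thus aimed at a non-issue; it is also false as a claim (the characteristic form may well lie purely in the $\Omega^1_{X/S}$-direction, e.g.\ for an Artin--Schreier sheaf $\mathcal{L}_\psi(t/\pi)$ on $\mathbb{A}^1_S$), and has no bearing on non-degeneracy, which is not about which direction the characteristic form points in.

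Second, your first step correctly identifies translation as the mechanism, but you then concede the key difficulty --- that $\sF$ is not translation-invariant --- and propose to route around it via compatibility of characteristic cycles with \'etale pullback. This does not close the gap: pullback along $\tau_t$ moves $SS(j_!\sF)$ to $SS(j_!\tau_t^*\sF)$, which has no a priori relation to $SS(j_!\sF)$, so nothing propagates from $W$ to $W+t$. The missing idea, which is precisely Proposition \ref{invariant} of the paper (and T. Saito's key observation), is that $\sF$ \emph{is} invariant by translation by all $n$-torsion points with $n$ prime to a fixed integer $m(\sF)$: this follows from a group-theoretic analysis of the Galois \'etale cover trivializing $\sF$. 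Combined with the density of such torsion points in $X_s$ over a strict henselian trait, the translates $W+a_c$ cover $X_s$, and non-degeneracy follows. Semi-continuity of the largest slope only gives one-sided inequalities and cannot by itself yield the equality required for non-degeneracy. Without the translation-invariance lemma, your argument cannot be completed.
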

T. Saito's key idea is the following: a locally constant and constructible \'etale sheaf on an abelian variety over a  field is invariant by translation by some non trivial torsion points depending on the sheaf (proposition \ref{invariant}). When the base field is a strict henselian discrete valuation field, these good torsion points are dense. Thus, we get the non-degeneracy of the ramification of $\sF$ along $X_s$ from the  invariance of $\sF$ by sufficiently many translations. Theorem \ref{mainpropintro} bears some similarity with  Tsuzuki's result on the constancy of Newton polygons for convergent $F$-isocrystals on abelian varieties over a finite field \cite{Tsuzuki}.\\ \indent

The article is organized as follows. In section 1 and 2, we briefly recall Abbes and Saito's ramification theory and introduce the singular support and the characteristic cycle for \'etale sheaves. In section 3, we prove semi-continuity properties for the largest slope and the largest logarithmic slopes for locally constant sheaves ramified along smooth divisors. Relying on these semi-continuity properties, we compute in section 4 the characteristic cycle of a locally constant sheaf on the complement of a smooth divisor after a wild enough twist by a locally constant sheaf coming from the base curve. Based on results in section 3 and section 4, the main Theorem \ref{introgenleal} is proved in section 5. In section 6, we prove corollary \ref{boundedintroduction} by applying our main Theorem \ref{introgenleal}. Following a suggestion from T. Saito, we prove in the last section Theorem \ref{mainpropintro} and other various properties for the wild ramification of $\ell$-adic lisse sheaves on abelian schemes over curves in positive characteristic ramified along a vertical divisor.

\subsection*{Acknowledgement}
We would like to express our gratitude to A. Abbes, A. Beilinson, L. Fu, O. Gabber, F. Orgogozo, T. Saito and Y. Tian for inspiring discussions on this topic and valuable suggestions. We are indebted to T. Saito for providing a simplified argument in the proof of Theorem \ref{sameCC}, as well as for suggesting proposition \ref{invariant}, which gave birth to the last section of this paper. We thank the anonymous referee for careful reading and valuable comments. The first author would like to thank Y. Cao and T. Deng for helpful discussions on abelian varieties.  This manuscript was mainly written while the first author was visiting the Max-Planck Institute for Mathematics in Bonn and while the second author was visiting the Catholic University in Leuven and received support from the long term structural funding-Methusalem grant of the Flemish Government. We would like to thank both Institutes for their hospitality and for providing outstanding working conditions.
The first author is currently supported by the National Natural Science Foundation of China (No. 11901287), the Natural Science Foundation of Jiangsu Province (No. BK20190288) and the Nanjing Science and Technology Innovation Project.

\section{Recollection on the ramification theory of local fields}

\subsection{}
Let $K$ be a henselian discrete valuation field, $\sO_K$ the ring of integer of $K$, $\fm_K$ the maximal ideal of $\sO_K$, $F$ the residue field of $\sO_K$, $\overline K$ a separable closure of $K$, and let $G_K$ be the Galois group of $\overline K$ over $K$.
 We assume that the characteristic of $F$ is $p>0$.
\subsection{}
Suppose that $F$ is perfect. Let $\{G^r_{K,\cl}\}_{r\in \bQ\geq 0}$ be the classical upper numbering ramification filtration on $G_K$  \cite{CL}. For $r\in \bQ_{\geq 0}$, put
\begin{equation*}
G^{r+}_{K,\cl}=\overline{ \bigcup_{s>r}G^s_{K,\cl} }
\end{equation*}
The subgroup $G^0_{K,\cl}$ is the inertia subgroup $I_K$ of $G_K$ and $G^{0+}_{K,\cl}$ is the wild inertia subgroup $P_K$ of $ G_K$.

\subsection{}
 When $F$ is not assumed to be perfect, Abbes and Saito defined in \cite{RamImperfect} two decreasing filtrations $\{G^r_K\}_{r\in\bQ_{>0}}$ and $\{G^r_{K,\log}\}_{r\in\bQ_{\geq 0}}$ on $G_K$ by closed normal subgroups. These filtrations are called respectively {\it the ramification filtration} and {\it the logarithmic ramification filtration}.  For $r\in\bQ_{\geq 0}$, put
\begin{equation*}
G^{r+}_K=\overline{\bigcup_{s>r}G_K^s}\ \ \ \textrm{and}\ \ \ G^{r+}_{K,\log}=\overline{\bigcup_{s>r}G_{K,\log}^s}.
\end{equation*}
We denote by $G^0_K$ the group $G_K$. The ramification filtrations satisfy the following properties:

\begin{proposition}[\cite{RamImperfect,as ii,logcc,wr}]\label{propramfil}
\begin{itemize}
\item[(i)]
For any $0<r\leq 1$, we have
$$G^r_K=G^0_{K,\log}=I_K \text{ and } G^{+1}_K=G^{0+}_{K,\log}=P_K.$$
\item[(ii)]
For any $r\in \bQ_{\geq 0}$, we have
$$G^{r+1}_K\subseteq G^r_{K,\log}\subseteq G^r_K.$$
If $F$ is perfect, then for any $r\in \bQ_{\geq 0}$, we have $$G^r_{K,\cl}=G^r_{K,\log}=G^{r+1}_K.$$
\item[(iii)]
Let $K'$ be a finite extension of $K$ contained in $\overline K$. Let $e$ be the ramification index of $K'/K$. Then, for any $r\in\bQ_{> 1}$, we have $G^{er}_{K'}\subseteq G^r_K$ with equality if $K'/K$ is unramified. For any $r\in\bQ_{>0}$, we have $G^{er}_{K',\log}\subseteq G^r_{K,\log}$ with equality if $K'/K$ is tamely ramified.
\item[(iv)]
For any $r\in \bQ_{>0}$, the graded piece $G^{r}_{K,\log}/G^{r+}_{K,\log}$ is abelian, $p$-torsion and contained in the center of $P_K/G^{r+}_{K,\log}$.  For any $r\in\bQ_{>1}$, the graded piece $G^{r}_{K}/G^{r+}_{K}$ is abelian and contained in the center of $P_K/G^{r+}_{K}$. If we further assume that the characteristic of $K$ is $p>0$, then, for any $r\in\bQ_{>1}$, the graded piece $G^{r}_{K}/G^{r+}_{K}$ is $p$-torsion.
\end{itemize}
\end{proposition}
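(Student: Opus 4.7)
Since Proposition \ref{propramfil} is a compilation of results already proved in \cite{RamImperfect,as ii,logcc,wr}, my plan is to indicate, for each item, which reference contains the statement and what the underlying mechanism is, rather than to redo the arguments from scratch.

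Part (i) follows directly from the definitions of the non-log and log Abbes-Saito filtrations in \cite{RamImperfect}: at small radii the rigid-geometric tubes used to cut out $G^r_K$ only see the ring of integers, so one recovers the full inertia $I_K$; the identification $G^{1+}_K = P_K = G^{0+}_{K,\log}$ encodes the shift-by-one between the log and non-log conventions at the tame level. For (ii), the inclusions $G^{r+1}_K \subseteq G^r_{K,\log} \subseteq G^r_K$ come from the comparison between the representing functors for the two filtrations, also in \cite{RamImperfect}. When $F$ is perfect, a careful comparison of the Abbes-Saito tubes with the classical construction via Herbrand's function yields $G^r_{K,\cl} = G^r_{K,\log} = G^{r+1}_K$; this identification is again established in \cite{RamImperfect}.

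Statement (iii) reflects the behavior of the defining tubes under a finite extension $K'/K$: their radii scale by the ramification index $e$, which gives the inclusion $G^{er}_{K'} \subseteq G^r_K$, with equality whenever the base change leaves the relevant tubes unchanged, i.e.\ in the unramified case (respectively the tame case for the logarithmic version). Again this is in \cite{RamImperfect}.

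The main content of the proposition is (iv). The abelianness, $p$-torsion and centrality in $P_K$ of the log graded piece $G^r_{K,\log}/G^{r+}_{K,\log}$ is the principal theorem of \cite{logcc} and, in positive characteristic, \cite{wr}; the analogous structural statement for the non-log graded piece $G^r_K/G^{r+}_K$ with $r>1$ is the main theorem of \cite{as ii}, with the $p$-torsion assertion in characteristic $p$ appearing in \cite{wr}. The common strategy is to embed the graded piece into the $\bF_p$-vector space of additive characters on a suitable $F$-vector space built out of Kähler differentials on the integral models, which is visibly abelian and of exponent $p$; centrality in $P_K$ then results from the commutator estimates implicit in this identification. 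The step requiring the most care, if one were to reconstruct the argument, is precisely this structural result on graded pieces, since it depends on a delicate analysis of modules of differentials on rigid analytic tubes and on the construction of the refined Swan conductor.
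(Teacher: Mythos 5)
Your proposal is correct and takes the same approach as the paper: the paper gives no proof of Proposition \ref{propramfil}, simply citing \cite{RamImperfect,as ii,logcc,wr}, and your dispatch of each item to the appropriate reference is accurate. Your brief remarks on the underlying mechanisms (scaling of the defining tubes under finite extensions, the characteristic form realizing the graded piece inside a space of differentials) are consistent with the cited sources but are not required for the paper's purposes, which simply invokes the results.
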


\subsection{}\label{invM}
Let $\Lambda$ be a finite field of characteristic $\ell\neq p$.
 Let $M$ be a finitely generated $\Lambda$-module with a continuous $P_K$-action. The module $M$ has decompositions
 \begin{equation}\label{twodecomp}
M=\bigoplus_{r\geq 1}M^{(r)}\ \ \ \textrm{and}\ \ \ M=\bigoplus_{r\geq 0}M_{\log}^{(r)}
\end{equation}
into $P_K$-stable submodules, where $M^{(1)}=M^{(0)}_{\log}=M^{P_K}$, and such that for every $r\in \bQ_{>0}$,
\begin{align*}
(M^{(r+1)})^{G^{r+1}_K}=0\ \ \ \textrm{and}\ \ \ (M^{(r+1)})^{G^{(r+1)+}_K}=M^{(r+1)};\\
(M^{(r)}_{\log})^{G^{r}_{K,\log}}=0\ \ \ \textrm{and}\ \ \ (M^{(r)}_{\log})^{G^{r+}_{K,\log}}=M^{r}_{\log}.
\end{align*}
The decompositions \eqref{twodecomp} are called respectively the {\it slope decomposition} and the {\it logarithmic slope decomposition} of $M$. The values $r$ for which $M^{(r)}\neq 0$ (resp. $M^{(r)}_{\log}\neq 0$) are the {\it slopes} (resp. the {\it logarithmic slopes}) of $M$.
Let $\Sl_K(M)$ be the set of slopes of $M$. Let $ \Sl_{K,\log}(M)$ be the set of logarithmic slopes of $M$. For $M\neq 0$, let $r_K(M)$ be the largest slope of $M$.  Let  $r_{K,\log}(M)$ be the largest logarithmic slope of $M$. We say that $M$ is {\it isoclinic} (resp. {\it logarithmic isoclinic}) if $M$ has only one slope (resp. only one logarithmic slope). We say that $M$ is {\it tame} if the action of $P_K$ on $M$ is trivial, that is $M=M^{(1)}=M^{(0)}_{\log}$. By proposition \ref{propramfil} (ii), we have
\begin{equation}\label{inequalityLogNonLog}
r_{K,\log}(M)+1\geq r_K(M)\geq r_{K,\log}(M).
\end{equation}
The {\it total dimension} of $M$ is defined by
\begin{equation}\label{dimtot}
\dt_K(M):=\sum_{r\geq 1}r\cdot\dim_{\Lambda} M^{(r)}.
\end{equation}
The {\it Swan conductor} of $M$ is defined by
\begin{equation*}
\sw_K(M):=\sum_{r\geq 0}r\cdot\dim_{\Lambda} M^{(r)}_{\log}
\end{equation*}
We have
\begin{equation*}
\sw_K(M)+\dim_{\Lambda}M\geq \dt_K(M)\geq \sw_K(M).
\end{equation*}
If the residue field $F$ is perfect, we have
\begin{align}
r_{K,\log}(M)+1&=r_K(M)\\
\sw_K(M)+\dim_{\Lambda}M&= \dt_K(M)
\end{align}
and $\sw_K(M)$ is the classical Swan conductor of $M$.

Let $1\leq r_1<r_2<\cdots<r_n$ be all slopes of $M$, the {\it Newton polygon} of the slope decomposition  of $M$ denotes the lower convex hull in $\mathbb R^2$ of the following points:
$$
(0,0)\ \ \textrm{and}\ \ \left(\sum_{i=1}^j\dim_\Lambda M^{(r_i)}, \sum_{i=1}^j r_i\cdot\dim_\Lambda M^{(r_i)}\right)\ \  (j=1,\dots, n).
$$

Let $K'$ be a finite separable extension of $K$ contained in $\overline K$. Let $e$ be the ramification index of $K'/K$. By proposition \ref{propramfil} (iii), we have
\begin{align*}
e\cdot r_K(M)\geq r_{K'}(M)\ \ \ &\textrm{and}\ \ \ e\cdot r_{K,\log}(M)\geq r_{K',\log}(M);\\
e\cdot\dt_K (M)\geq\dt_{K'}(M) \ \ \ &\textrm{and}\ \ \ e\cdot\sw_K (M)\geq\sw_{K'}(M).
\end{align*}
If $K'/K$ is tamely ramified, we have
\begin{equation*}
e\cdot r_{K,\log}(M)= r_{K',\log}(M) \ \ \ \textrm{and}\ \ \ e\cdot\sw_K (M)=\sw_{K'}(M).
\end{equation*}
If $K'/K$ is unramified, we have
\begin{equation*}
r_K(M)=r_{K'}(M)\ \ \ \textrm{and}\ \ \ \dt_K (M)=\dt_{K'}(M)
\end{equation*}

\subsection{}
We assume that the characteristic of $K$ is $p>0$. Let $M$ be a finitely generated $\Lambda$-module with a continuous $P_K$-action. Suppose that $M$ is isoclinic of slope $r\in \bQ_{>1}$. Then, $M$ has a unique direct sum decomposition
\begin{equation*}
M=\bigoplus_{\chi\in X(r)} M_{\chi}
\end{equation*}
into $P_K$-stable submodules $M_{\chi}$, where $X(r)$ is the set of isomorphism classes of non-trivial finite characters $\chi:G^r_K/G^{r+}_K\longrightarrow \Lambda^{\times}$ and  where $M_{\chi}$ is a direct sum of finitely many copies of $\chi$.

Assume moreover that $F$ is of finite type over a perfect field. We denote by $\sO_{\overline K}$ the integral closure of $\sO_K$ in $\overline K$, by $\overline F$ the residue field of $\sO_{\overline K}$ and by $\mathrm{ord}:\overline K\longrightarrow\bQ\bigcup\{\infty\}$ a valuation normalized by $\mathrm{ord}(K^{\times})=\bZ$. For any $r\in\bQ_{\geq 0}$, we put
\begin{equation*}
\fm^r_{\overline K}=\{x\in\overline K^{\times}\,;\,\mathrm{ord}(x)\geq r\}\ \ \ \textrm{and}\ \ \ \fm^{r+}_{\overline K}=\{x\in\overline K^{\times}\,;\,\mathrm{ord}(x)>r\}.
\end{equation*}
The quotient $\fm^r_{\overline K}/\fm^{r+}_{\overline K}$ is a $1$-dimensional $\overline F$-vector space. For $r\in\bQ_{>1}$, there exists an injective homomorphism called the {\it characteristic form}
\begin{equation}\label{charform}
\xymatrix{
\ch:\mathrm{Hom}_{\mathbb F_p}(G^r_K/G^{r+}_K,\mathbb F_p)\ar[r]& \mathrm{Hom}_{\overline F}(\fm^r_{\overline K}/\fm^{r+}_{\overline K},\Omega^1_{\sO_K}\otimes_{\sO_K}\overline F).
}
\end{equation}

\subsection{}
We present three elementary lemmas on Galois modules that will be used in the sequel of the article.

\begin{lemma}\label{Nwithprescribedslope}
Assume that $\ch(K)=p$, that the residue field $F$ is perfect and that
$\Lambda$  contains a primitive $p$-th root of unity. Then, for any two positive integers $r,s$ co-prime to $p$, there exists a non-trivial $\Lambda$-module $N$ with continuous $G_K$-action such that $N$ is pure of logarithmic slope $r/s$.
\end{lemma}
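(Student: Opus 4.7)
The plan is to split the problem into the cases $r=0$ and $r>0$. For $r=0$ I would simply take $N=\Lambda$ endowed with the trivial $G_K$-action: then $P_K$ fixes $N$, so $N=N^{(0)}_{\log}$, and $N$ is non-zero, which settles this case.

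For $r>0$, my first observation is that the graded piece $G^r_{K,\log}/G^{r+}_{K,\log}$ is non-trivial, abelian and $p$-torsion. Indeed, the perfection of $F$ together with proposition \ref{propramfil}(ii) identifies this quotient with the classical graded piece $G^r_{K,\cl}/G^{r+}_{K,\cl}$, and the stated properties are exactly those recorded in the paragraph preceding proposition \ref{propramfil}. So it is a non-trivial topological $\mathbb{F}_p$-vector space. From there, I would reduce to a finite setting: choose a non-identity element $x$ and an open normal subgroup $V$ of the profinite group $G_K/G^{r+}_{K,\log}$ avoiding $x$, and form the finite quotient $\pi:G_K\twoheadrightarrow G:=(G_K/G^{r+}_{K,\log})/V$. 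By construction, $\pi(G^{r+}_{K,\log})=\{e\}$ while $H:=\pi(G^r_{K,\log})$ is a non-trivial normal elementary abelian $p$-subgroup of $G$.

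Since $\Lambda$ contains $\mu_p$ by hypothesis, I can pick a non-trivial character $\chi:H\to\mu_p(\Lambda)\subset\Lambda^\times$ and define $N:=\mathrm{Ind}_H^G\chi$, inflated to $G_K$ via $\pi$. The verification that $N$ is logarithmic isoclinic of slope $r$ boils down to the two equalities $N^{G^{r+}_{K,\log}}=N$ and $N^{G^r_{K,\log}}=0$. The first is automatic since $G^{r+}_{K,\log}$ acts through $\{e\}$. For the second, I would invoke the Mackey decomposition $N|_H=\bigoplus_{g\in G/H}\chi^g$, valid because $H\trianglelefteq G$, and observe that every conjugate character $\chi^g$ is still non-trivial (conjugation by $g$ is an automorphism of $H$, which preserves the non-triviality of characters), so $N^H=0$ and hence $N^{G^r_{K,\log}}=0$. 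The only step requiring any care is the choice of the finite quotient $G$ in which $G^r_{K,\log}$ and $G^{r+}_{K,\log}$ have distinct images; this is a routine use of profiniteness, and I do not expect any serious obstacle along the way.
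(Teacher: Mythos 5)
Your proof is correct and follows essentially the same route as the paper's: reduce to a finite quotient $G$ of $G_K/G^{r+}_{K,\log}$ in which the image $H$ of $G^r_{K,\log}$ is a non-trivial normal elementary abelian $p$-group, then take $N=\mathrm{Ind}_H^G\chi$ for a non-trivial character $\chi$ and use that $N|_H$ is a sum of $G$-conjugates of $\chi$. You merely spell out the profiniteness reduction and the Mackey computation that the paper states in one line.
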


\begin{proof}
Let $\pi$ be a uniformizer of $K$, let $L_1=K[\theta]/(\theta^s-\pi)$ and $L_2=L_1[t]/(t^p-t-\theta^{-r})$. The extension $L_2/L_1$ is Galois with $\gal(L_2/L_1)\cong \bZ/p\bZ$. Let $\chi:\gal(L_2/L_1)\longrightarrow \Lambda^{\times}$ be a non-trivial representation and we denote by $N_1$ the associated $\Lambda$-module of rank $1$ with a continuous $G_{L_1}$-action.
Since $(r,p)=1$, we have $r_{L_1,\log}(N_1)=r$ (\cite[1.1.7]{lau}). Since $L_1/K$ is tamely ramified with $e(L_1/K)=s$, the $G_{L_1}$-module $$\mathrm{Res}_{G_{K}}^{G_{L_1}}\mathrm{Ind}_{G_{K}}^{G_{L_1}}N_1
\cong N_1\otimes_{\Lambda}\mathrm{Res}_{G_{K}}^{G_{L_1}}
\mathrm{Ind}_{G_{K}}^{G_{L_1}}\Lambda$$
is pure of logarithmic  slope $r$. Hence the $G_K$-module $N=\mathrm{Ind}_{G_{K}}^{G_{L_1}}N_1$ is logarithmic isoclinic of slope $r/s$.
\end{proof}

\begin{lemma}\label{tensordtsw}
Let $M$ and $N$ be finitely generated $\Lambda$-modules with continuous $G_K$-actions.
Put $r=r_K(M)$ and $s=r_K(N)$.
Put $r_{\log}=r_{K,\log}(M)$ and $s_{\log}=r_{K,\log}(N)$.
\begin{itemize}
\item[(i)]
If $r\geq s$, then we have $r_K(M\otimes_\Lambda N)\leq r$ and
\begin{equation}\label{dimtotless}
\dt_K(M\otimes_\Lambda N)\leq\dim_{\Lambda} N\cdot\dim_{\Lambda} M\cdot r
\end{equation}
Assume that $N$ is isoclinic and that $s$ is not a slope of $M$.
Assume that $M$ and $N$ are not trivial.
Then,
\begin{equation}\label{dimtotbigger}
\dt_K(M\otimes_\Lambda N) \geq \dim_{\Lambda} N\cdot\dim_{\Lambda} M\cdot s
\end{equation}
with equality if and only if $s>r$.
In the latter case, $M\otimes_\Lambda N$ is isoclinic of slope $s$.\\
\item[(ii)]
If $r_{\log}\geq s_{\log}$,  then we have $r_{K,\log}(M\otimes_\Lambda N)\leq r_{\log}$ and
\begin{equation*}
\sw_K(M\otimes_\Lambda N)\leq\dim_{\Lambda} N\cdot\dim_{\Lambda} M\cdot r_{\log}.
\end{equation*}
Assume that $N$ is logarithmic isoclinic and that $s_{\log}$ is not a logarithmic slope of $M$.
Assume that $M$ and $N$ are not trivial.
Then,
\begin{equation*}
\sw_K(M\otimes_\Lambda N) \geq \dim_{\Lambda} N\cdot\dim_{\Lambda} M\cdot s_{\log}
\end{equation*}
with equality if and only if $s_{\log}>r_{\log}$.
In the latter case, $M\otimes_\Lambda N$ is logarithmic isoclinic of slope $s_{\log}$.
\end{itemize}
\end{lemma}
\begin{proof}
The proofs of $(i)$ and $(ii)$ are the same. We prove $(i)$. Since the action of $G^{r+}_K$ on $M$ and $N$ is trivial, its action on $M\otimes_{\Lambda} N$ is trivial too. Hence, $r_K(M\otimes_{\Lambda} N)\leq r$ and we get \eqref{dimtotless}.
Assume that $N$ is isoclinic and that $s$ is not a slope of $M$.
To prove the second part of lemma \ref{tensordtsw} $(i)$, we can suppose that $M$ is isoclinic.
If $r>s$, we have
$M\otimes_{\Lambda} N\cong M^{\oplus\dim_{\Lambda}N}$ as $G^r_K$-representation. Hence $(M\otimes_{\Lambda} N)^{G^r_K}=(M^{G^r_K})^{\oplus\dim_{\Lambda}N}=\{0\}$. Hence $M\otimes_{\Lambda} N$ is isoclinic of slope $r$.
Thus,
$$
\dt_K(M\otimes_\Lambda N)=\dim_{\Lambda} N\cdot\dim_{\Lambda} M\cdot r > \dim_{\Lambda} N\cdot\dim_{\Lambda} M\cdot s
$$
If $r<s$, we have
$$
\dt_K(M\otimes_\Lambda N)= \dim_{\Lambda} N\cdot\dim_{\Lambda} M\cdot s
$$
and the second part of lemma \ref{tensordtsw} $(i)$ is proved.
\end{proof}

\begin{lemma}\label{added_referee_lemma}
Let $M$ be a non trivial finitely generated $\Lambda$-module with continuous $G_K$-action.
Let $s\geq 0$ be a real number.
\begin{enumerate}
\item If $N$ is a finitely generated $\Lambda$-module of pure logarithmic slope $s$, and if $s $ is not a logarithmic slope of $M$, then $(M\otimes_{\Lambda} N)^{P_K}=0$.\\
\item If $(M\otimes_{\Lambda} N)^{P_K}=0$ for every  finitely generated $\Lambda$-module of pure logarithmic slope $s$, then $s$ is not a logarithmic slope of $M$.
\end{enumerate}
\end{lemma}
\begin{proof}
$(1)$ We can suppose that $M$ is logarithmic isoclinic and that $N$ is not trivial.
Then, lemma \ref{tensordtsw} $(ii)$ implies that $M\otimes_{\Lambda} N$ is isoclinic of logarithmic slope $\Max(r_{K,\log}(M)),s)>0$, so that $(M\otimes_{\Lambda} N)^{P_K}=0$ follows.\\
$(2)$ Put $N=(M^{(s)}_{\log})^{\vee}$ and observe that $N$ has pure logarithmic slope $s$.
The inclusion $M^{(s)}_{\log}\longrightarrow  M$ induces an inclusion
$$
(M^{(s)}_{\log}\otimes_{\Lambda} N)^{P_K}\longrightarrow (M \otimes_{\Lambda} N)^{P_K}
$$
If $(M \otimes_{\Lambda} N)^{P_K}=0$, we deduce that the identity morphism of $M^{(s)}_{\log}$ is the zero element of the space of endomorphisms of $M^{(s)}_{\log}$.
Hence, $M^{(s)}_{\log}=0$ and the proof of lemma \ref{added_referee_lemma} is complete.
\end{proof}

\section{Singular support and characteristic cycle  for $\ell$-adic sheaves}

\subsection{}
Let $k$ be a field of characteristic $p>0$.  Let $X$ be a smooth connected $k$-scheme. Let $C$ be a closed conical subset of the cotangent bundle $\bT^*X$. Let $x$ be a point of $X$ and $\bar x\longrightarrow X$ a geometric point of $X$ above $x$. Set $\bT^*_{x}X=\bT^*X\times_X x$   and $C_{ x}=C\times_X x$.  Set $\bT^*_{\bar x}X=\bT^*X\times_X\bar x$ and  $C_{\bar x}=C\times_X\bar x$. Let $\Lambda$ be a finite field of characteristic $\ell\neq p$.

\subsection{}
Let $f:U\longrightarrow X$ be a morphism of smooth $k$-schemes. Let $u\in U$ and let $\bar u\longrightarrow U$ be a geometric point of $U$ lying over $u$. We say that $f:U\longrightarrow X$ is $C$-{\it transversal at} $u$ if $\ker df_{\bar u} \bigcap C_{f(\bar u)}\subseteq \{0\}\subseteq
\bT^*_{f(\bar u)}X$, where $df_{\bar u}:\bT^*_{f(\bar u)}X\longrightarrow \bT^*_{\bar u}U$ is the cotangent map of $f$ at $\bar u$. We say that $f:U\longrightarrow Y$ is $C$-transversal if it is $C$-transversal at every point of $U$. For a $C$-transversal morphism $f:U\longrightarrow X$, let $f^\circ C$ be the image of $C\times_XU$ in $\bT^*U$ by  $df:\bT^*X\times_XU\longrightarrow\bT^*U$.
Note that $f^\circ C$ is a closed conical subset of $\bT^*U$.

Let $g:X\longrightarrow Y$ be a morphism of smooth $k$-schemes.  Let $x\in X$ and let $\bar x\longrightarrow X$ be a geometric point of $X$ lying over $x$. We say that $g:X\longrightarrow Y$ is $C$-{\it transversal at} $x$ if $dg_{\bar x}^{-1}(C_{\bar x})=\{0\}\subset\bT^*_{\bar x}Y$. We say that $g:X\longrightarrow Y$ is $C$-{\it transversal } if it is $C$-transversal at every point of $X$.

Let $(g,f):Y\longleftarrow U\longrightarrow X$ be a pair of morphisms of smooth $k$-schemes. We say that $(g,f)$ is $C$-{\it transversal} if $f:U\longrightarrow X$ is $C$-transversal and if $g:U\longrightarrow Y$ is $f^\circ C$-transversal.

Let $\sF$ be an object in $D^b_c(X,\Lambda)$. We say that $\sF$ is {\it micro-supported on} $C$ if for any $C$-transversal pair of morphisms of smooth $k$-schemes $(g,f):Y\longleftarrow U\longrightarrow X$, the map $g:U\longrightarrow Y$ is universally locally acyclic with respect to $f^*\sF$. We say that $\sF$ is {\it weakly micro-supported} on $C$ if for any $C$-transversal pair of morphisms of smooth $k$-schemes $(g,f):\mathbb A^1_k\longleftarrow U\longrightarrow X$ satisfying
\begin{itemize}
\item[(1)]
If $k$ is infinite, the map $f:U\longrightarrow X$ is an open immersion;
\item[(2)]
If $k$ is finite, there exists a finite extension $k'/k$ such that $f:U\longrightarrow X$ is the composition of the first projection  $\pr_1:U\otimes_{k}k'\longrightarrow U$ with an open immersion $h: U\longrightarrow X$;
\end{itemize}
the map $g:U\longrightarrow Y$ is universally locally acyclic with respect to $f^*\sF$.

\vspace{0.2cm}
The following theorem is due to Beilinson \cite[Th. 1.3, Th. 1.5]{bei}.

\begin{theorem}\label{beilinsonmain}
Let $\sF$ be an object in $D^b_c(X,\Lambda)$. Then, there exists a smallest closed conical subset $SS(\sF)$ in $\bT^*X$ on which $\sF$ is micro-supported. The closed conical subset $SS(\sF)$ coincides with the smallest closed conical subset in $\bT^*X$ on which $\sF$ is weakly micro-supported. Furthermore, $SS(\sF)$ is equidimensional of dimension $\dim_k X$.
\end{theorem}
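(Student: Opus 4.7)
The plan is to follow Beilinson's strategy, which probes $\sF$ by pencils of curves (the Radon transform philosophy) and bootstraps from the curve case.

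First I would establish the existence of a smallest closed conical subset $SS_w(\sF) \subseteq \bT^*X$ on which $\sF$ is weakly micro-supported. The key point is stability under intersection: if $\sF$ is weakly micro-supported on closed conical $C_1$ and $C_2$, then on $C_1 \cap C_2$. Given a pair $(g,f) : \bA^1_k \longleftarrow U \longrightarrow X$ satisfying the conditions (1)--(2) that is $(C_1 \cap C_2)$-transversal, one perturbs $(g,f)$ within the open locus of $(C_1 \cap C_2)$-transversal pairs to produce pairs transversal to each $C_i$ separately, and local acyclicity for $f^*\sF$ follows from the two hypotheses. Together with the obvious descending-chain condition on conical subsets with fixed base, this yields $SS_w(\sF)$.

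Next I would handle $\dim X = 1$ directly. Here weak micro-support on $C$ amounts to $\sF$ being locally constant on the complement of the image of $C \setminus \{0\}$ in $X$, so $SS_w(\sF)$ is the union of the zero section with the cotangent fibers at the finitely many points where $\sF$ fails to be locally constant. This is already equidimensional of dimension $1 = \dim X$.

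The main thrust is then the dimension bound for general $X$: every irreducible component of $SS_w(\sF)$ has dimension $\dim_k X$. I would embed $X$ (locally) into a projective space and use a family of linear projections to $\bA^1$, i.e.\ Lefschetz-type pencils. For a sufficiently general pencil, the induced projection is transversal to $SS_w(\sF)$ outside a controlled locus, and the one-dimensional case applied to the members of the pencil provides a sharp count for the ``bad'' fibers. A dimension-theoretic bookkeeping in the total space of the family of pencils then rules out components of $SS_w(\sF)$ of dimension strictly greater than $\dim X$, while minimality of $SS_w(\sF)$ plus a specialization argument rules out components of dimension strictly smaller. I expect this equidimensionality estimate to be the main obstacle: it encapsulates the essential geometric input, and the handling of the singular members of the pencils (where one must control contributions from the cotangent fibers appearing in the curve calculation) is technically delicate and requires Beilinson's analysis of the Radon transform.

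Finally I would deduce $SS_w(\sF) = SS(\sF)$. The nontrivial direction is that weakly micro-supported on $C := SS_w(\sF)$ implies micro-supported on $C$. Given an arbitrary $C$-transversal pair $(g,f): Y \longleftarrow U \longrightarrow X$ of smooth $k$-schemes, universal local acyclicity of $g$ with respect to $f^*\sF$ can be tested by composing $g$ with general curves in $Y$ and restricting $f$ to the preimages; the equidimensionality proved above ensures that enough such tests fall into the pencil framework of the weak definition. Granted the equidimensionality bound, this last step is essentially formal.
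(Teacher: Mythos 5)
The paper does not prove Theorem~\ref{beilinsonmain}; it is stated with the attribution ``due to Beilinson \cite[Th.~1.3, Th.~1.5]{bei}'' and used throughout as a black box, so there is no internal proof in the paper to compare your sketch against. That said, your outline does follow the broad architecture of Beilinson's argument in \cite{bei} (probing $\sF$ by pencils of curves, the Radon-transform correspondence, the curve case as base, and a dimension count in a parameter space), but the first step contains a genuine gap.

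You propose to obtain the existence of $SS_w(\sF)$ from stability under intersection, arguing that a $(C_1\cap C_2)$-transversal test pair $(g,f):\bA^1_k\longleftarrow U\longrightarrow X$ can be perturbed to pairs transversal to each $C_i$ separately and that local acyclicity then follows from the two hypotheses. This does not go through. Since $C_1\cap C_2\subseteq C_i$, being $(C_1\cap C_2)$-transversal is a strictly \emph{weaker} condition than being $C_i$-transversal: $\ker df_u$ may meet $(C_i\setminus C_{3-i})_{f(u)}$ over a positive-dimensional locus of $u\in U$, and no small perturbation of $(g,f)$ is forced to avoid these directions. Even if one could arrange such perturbations, you would then need to pass local acyclicity from the perturbed pairs to the original pair $(g,f)$; that semicontinuity is not available at the outset --- in Beilinson's paper it is an output of the Radon-transform analysis, not an axiom one can invoke to get the argument off the ground. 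Invoking it at step 1 is therefore circular. Beilinson does not derive existence by intersecting a descending chain of conical subsets: he produces the singular support essentially directly out of the Radon transform on the dual projective space and proves minimality and equidimensionality from that construction; intersection stability is a consequence, not the starting point. Replacing your step 1 by that construction would bring the sketch in line with \cite{bei}; as written, the step does not close.
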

The closed conical subset $SS(\sF)$ is called \textit{the singular support of $\sF$}.

\subsection{}\label{def df}
We assume that $k$ is perfect. Let $S$ be a smooth $k$-curve. Let $f:X\longrightarrow S$ be a morphism. Let $x$ be a closed point of $X$ and put $s=f(x)$. We say that $x$ is an {\it at most $C$-isolated characteristic point} for $f:X\longrightarrow S$ if $f:X\setminus \{x\}\longrightarrow S$ is $C$-transversal. Any local trivialization of
$\bT^*S$ in a neighborhood of $s$ gives rise to a local section of $\bT^*X$ by applying $df:\bT^*S\times_SX\longrightarrow\bT^*X$. We abusively denote by $df$ this section. Let $A$ be a cycle in $\bT^*X$ supported on $C$. Under the condition that $x$ is an at most $C$-isolated characteristic point of $f:X\longrightarrow S$, the intersection of $A$  and the cycle $[df]$ is supported at most at a single point in $\bT^*_xX$. Since $C$ is conical, the intersection number $(A, [df])_{\bT^*X,x}$ is independent of the chosen local trivialization for $\bT^*S$ in a neighborhood of $s$.

The following theorem is due to Saito \cite[Th. 5.9]{cc}.
\begin{theorem}
Let $\sF$ be an object in $D^b_c(X,\Lambda)$.  There exists a unique cycle $CC(\sF)$ in $\bT^{\ast}X$ supported on $SS(\sF)$  such that for every \'etale morphism $h: U\longrightarrow X$, for every morphism $f: U\longrightarrow S$ with $S$ a smooth $k$-curve,  for every at most $h^{\circ}(SS(\sF))$-isolated characteristic point $u\in U$ for $f:U\longrightarrow S$, we have the following Milnor type formula
 \begin{equation}\label{Milnor}
 -\sum_i(-1)^i\dt (R^i\Phi_{\bar u}(h^{\ast}\sF, f))=(h^*(CC(\sF)),[df])_{T^{\ast}U, u},
 \end{equation}
 where $R\Phi_{\bar u}(h^{\ast}\sF, f)$ denotes the stalk of the vanishing cycle of $h^*\sF$ with respect to $f:U\longrightarrow S$ at the geometric point $\bar u\longrightarrow U$ above $u$.
\end{theorem}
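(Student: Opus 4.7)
I would start with uniqueness, which is more tractable. Write $SS(\sF)=\bigcup_a C_a$ as a union of irreducible components, and suppose two cycles $A=\sum_a m_a [C_a]$ and $A'=\sum_a m'_a [C_a]$ both satisfy the Milnor formula \eqref{Milnor}. By Beilinson's theorem \ref{beilinsonmain}, each $C_a$ is equidimensional of dimension $\dim_k X$, so its image in $X$ has codimension $\leq 1$ at the generic point. For each $a$, I would produce an \'etale neighborhood $h:U\longrightarrow X$ of a general point of the image of $C_a$ and a morphism $f:U\longrightarrow S$ to a smooth curve such that $(df):U\longrightarrow \bT^*U$ meets $h^\circ C_a$ at an isolated point $u$ transversally, while avoiding all other $h^\circ C_b$ except possibly at $u$. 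Ordering the components so that lower-dimensional contributions are already controlled by induction on $\dim(\pi(C_b))$, the difference $m_a-m'_a$ equals a linear combination of intersection numbers all of which vanish, forcing $m_a=m'_a$.

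\textbf{Existence.} The existence is the substantial part. My plan is to define the candidate coefficients $m_a$ component by component via the Milnor formula applied to a carefully chosen "isolating" test couple $(h,f)$ for $C_a$, then verify that the resulting cycle satisfies \eqref{Milnor} for \emph{every} admissible $(h,f,u)$. Concretely, for each irreducible component $C_a$ of $SS(\sF)$ choose $h_a:U_a\longrightarrow X$ \'etale and $f_a:U_a\longrightarrow S_a$ such that $u_a$ is an isolated $h_a^\circ SS(\sF)$-characteristic point meeting only $h_a^\circ C_a$; set
\begin{equation*}
m_a:=\frac{-\sum_i(-1)^i\dt R^i\Phi_{\bar u_a}(h_a^*\sF,f_a)-\sum_{b\neq a}m_b\cdot (h_a^*[C_b],[df_a])_{\bT^*U_a,u_a}}{(h_a^*[C_a],[df_a])_{\bT^*U_a,u_a}},
\end{equation*}
where the components $C_b$ contributing to the right-hand side have strictly smaller-dimensional image in $X$ so that an induction makes sense. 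One must check that $m_a$ does not depend on the choice of $(h_a,f_a,u_a)$: this independence is exactly a deformation statement saying that the alternating sum of total dimensions of vanishing cycles at an isolated characteristic point varies in an intersection-theoretic fashion as the test function is deformed within the space of $SS(\sF)$-transversal-except-at-one-point morphisms.

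\textbf{Main obstacle.} The key difficulty is precisely this independence, equivalently the validity of \eqref{Milnor} for arbitrary admissible test data once the $m_a$ are fixed by a specific choice. I would follow Saito's strategy and reduce to the case of a pencil, where one can compare two at most isolated characteristic points by sliding the test morphism $f$ through a one-parameter family of morphisms to $\mathbb A^1$. Invariance is reduced to a local conservation statement for total dimensions of vanishing cycles in an excellent one-parameter deformation; this in turn is handled either by Beilinson's Radon transform framework, which produces $CC(\sF)$ directly as a cycle in $\bT^*X$ through projection formulas on $X\times \check{\mathbb{P}}^N$, or, alternatively, by the curve-case computation $CC(\sG)=\sum_x(\dim \sG_{\bar\eta}+\sw_x \sG)[\bT^*_x X]$ on a smooth curve (itself a reformulation of the Grothendieck--Ogg--Shafarevich formula), combined with a compatibility of $CC$ under $SS$-transversal pullback. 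The genuine content of the theorem lies in upgrading the multiplicities, which a priori depend on Euler--Poincar\'e type data along one-dimensional slices, into a cycle-theoretic invariant: bridging that gap is where Saito's deformation-to-a-pencil argument and the analysis of non-isolated characteristic locus become indispensable.
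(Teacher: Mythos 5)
The paper does not prove this theorem: it is recalled as a citation of Saito \cite[Th.~5.9]{cc}, and no argument is given in the text. So there is no proof in the paper to compare your proposal against.

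That said, let me comment on the proposal itself. Your uniqueness argument is essentially sound in outline: the Milnor formula, applied to test pairs $(h,f)$ isolating a single component $C_a$ over a general point of its image in $X$, does pin down the multiplicity of $C_a$ once the lower-order contributions are controlled, and an induction on the dimension of the projection $\pi(C_a)\subseteq X$ organizes this. One point to be careful with is the zero section $\bT^*_XX$, which always lies in $SS(\sF)$ when $\sF\neq 0$ and whose image is all of $X$; its multiplicity is recovered from generic rank considerations rather than from isolated characteristic points, so your induction must start there. For existence, your plan is a plan, not a proof, and you acknowledge this honestly: the well-definedness of the $m_a$ independently of the chosen isolating test pair, and the validity of \eqref{Milnor} for \emph{all} admissible $(h,f,u)$ once the $m_a$ are fixed, is exactly where Saito's work lives. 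Your two proposed routes (Beilinson's Radon/Legendre transform on $X\times\check{\mathbb{P}}^N$, or a deformation-to-a-pencil argument with a conservation statement for total dimensions of vanishing cycles) are both genuine ingredients of the actual construction. In Saito's paper the characteristic cycle is first built globally on projective space via the Legendre transform and then \'etale-localized, with the Milnor formula established afterward as a theorem rather than used as the defining property component by component; your proposal runs the logic the other way, which creates the independence problem you flag. So the sketch is faithful to the spirit of Saito's approach and correctly locates the hard step, but it stops short of resolving it, and the statement as posed cannot be regarded as proved by what you have written.
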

The cycle $CC(\sF)$ is called the {\it characteristic cycle} of $\sF$.

\begin{example}\label{CCcurve}
Assume that $X$ is a smooth $k$-curve. Let $U$ be an open dense subscheme of $X$ and $j:U\longrightarrow X$ the canonical injection. Let $\sF$ be a locally constant and constructible sheaf of $\Lambda$-modules on $U$. Then we have
\begin{equation*}
CC(j_!\sF)=-\rk_{\Lambda}\sF\cdot[\bT^*_XX]-\sum_{x\in X\backslash U}\dt_x(\sF)\cdot[\bT^*_xX]
\end{equation*}
\end{example}

The following facts are due to Saito \cite[5.13, 5.14]{cc}.
\begin{proposition}\label{51314}
\begin{itemize}
\item[(i)]
Let $\sF$ be an object in $D^b_c(X,\Lambda)$. If $\sF$ is perverse, then, the support of $CC(\sF)$ is $SS(\sF)$ and the coefficients of $CC(\sF)$ are positive.
\item[(ii)] Let $f:Y\longrightarrow X$ be a separated morphism of smooth $k$-schemes. For every object $\sG$ in $D^b_c(Y,\Lambda)$, we have $CC(Rf_*\sG)=CC(Rf_!\sG)$.
\end{itemize}
\end{proposition}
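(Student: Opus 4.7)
For part (i), the plan is to exploit the Milnor formula (\ref{Milnor}) together with Beilinson's perversity result: when $\sF$ is perverse and $x\in X$ is an at most $SS(\sF)$-isolated characteristic point of a morphism $f:X\longrightarrow S$ to a smooth curve, the shifted vanishing cycles complex $R\Phi(\sF,f)[-1]$ is perverse on the special fiber and supported at $x$. Perverse sheaves supported at a single point live in cohomological degree zero, so $R\Phi_{\bar x}(\sF,f)$ is concentrated in degree $1$, and
\begin{equation*}
-\sum_i(-1)^i\dt R^i\Phi_{\bar x}(\sF,f)\;=\;\dt R^1\Phi_{\bar x}(\sF,f)\;\geq\;0.
\end{equation*}
Theorem \ref{beilinsonmain} ensures $SS(\sF)$ is equidimensional of dimension $\dim X$, so for any irreducible component $C_a$ of $SS(\sF)$ with coefficient $m_a$ in $CC(\sF)$, one can, at a generic smooth point of $C_a$ away from the other components, construct an \'etale chart $h$ and a morphism $f$ to a smooth curve such that $[df]$ meets $h^\circ SS(\sF)$ only along $h^{-1}(C_a)$ with a strictly positive multiplicity $n$. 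The Milnor formula then gives $m_a\cdot n=\dt R^1\Phi_{\bar x}(h^*\sF,f)\geq 0$, hence $m_a\geq 0$.

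To upgrade $m_a\geq 0$ into $m_a>0$ and simultaneously deduce $\mathrm{supp}(CC(\sF))=SS(\sF)$, I would argue by minimality. If $m_a=0$ for some component $C_a$, then $CC(\sF)$ is supported in the closed conical subset $C':=SS(\sF)\setminus C_a$. One then tries to show $\sF$ is weakly micro-supported on $C'$ by using the Milnor formula together with the vanishing of the relevant intersection numbers along $C_a$, contradicting the minimality of $SS(\sF)$ in Theorem \ref{beilinsonmain}. I expect the main obstacle to be the production of a sufficiently rich family of $C'$-transversal test pairs $(g,f):\mathbb A^1\longleftarrow U\longrightarrow X$ detecting each component, and controlling the total dimension of the associated vanishing cycles; this is where the perversity assumption and the sign constraints derived above do the essential work.

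For part (ii), the natural strategy is Nagata compactification: factor $f=\bar f\circ j$ with $j:Y\longrightarrow\bar Y$ an open immersion and $\bar f:\bar Y\longrightarrow X$ proper. Since $R\bar f_*=R\bar f_!$ trivially, and granting the compatibility of characteristic cycles with proper pushforward, the problem reduces to proving $CC(Rj_*\sG)=CC(Rj_!\sG)$ for any open immersion $j$. Writing $i$ for the closed complement in $\bar Y$, the excision triangle
\begin{equation*}
Rj_!\sG\longrightarrow Rj_*\sG\longrightarrow i_*i^*Rj_*\sG\xrightarrow{+1}
\end{equation*}
together with the additivity of $CC$ reduces the question to $CC(i_*i^*Rj_*\sG)=0$.

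To prove this last vanishing, I would invoke Verdier duality. Using $DRj_*=Rj_!D$ for open immersions together with the expected duality invariance $CC\circ D=CC$ of the characteristic cycle, one obtains the two symmetric identities $CC(Rj_*\sG)=CC(Rj_!D\sG)$ and $CC(Rj_!\sG)=CC(Rj_*D\sG)$. Testing both against Milnor-type intersection numbers on a common family of $SS$-transversal curve fibrations and exploiting the symmetry between $\sG$ and $D\sG$ should force the boundary contribution $i_*i^*Rj_*\sG$ to have zero characteristic cycle. The hardest step, in my view, is that the compatibility of $CC$ with proper pushforward invoked in the first reduction is itself a nontrivial input, so in practice a self-contained argument must treat pushforward and the Milnor formula simultaneously, with $C$-transversality of test pairs as the central technical device.
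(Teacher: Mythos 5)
The paper does not prove this proposition: it is a citation of [Sai17b, 5.13, 5.14], so there is no in-paper argument for your sketch to be measured against. Judging the sketch on its own merits:

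For (i), the first half is the right key input: perversity of vanishing cycles forces $R\Phi_{\bar x}(\sF,f)$ to be concentrated in a single degree at an isolated characteristic point, the Milnor formula \eqref{Milnor} then makes the intersection numbers $(CC(\sF),[df])$ nonnegative, and since $SS(\sF)$ is equidimensional by Theorem \ref{beilinsonmain}, a generic transverse test at a smooth point of each component yields $m_a\geq 0$. (The concentration is in degree $-1$, not $1$, with the usual convention that $R\Phi[-1]$ preserves perversity; the sign of the alternating sum is unaffected.) The hard content — strict positivity and $\mathrm{supp}\,CC(\sF)=SS(\sF)$ — is exactly where your sketch stops. The mechanism you name (a zero coefficient contradicts the minimality of $SS(\sF)$) is the right one, but to run it you must actually show that the vanishing of Milnor numbers along the allegedly superfluous component forces local acyclicity for a sufficiently rich family of $C'$-transversal test pairs, i.e.\ that the component is removable from the weak micro-support. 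That is the substantive step, and it is missing from your sketch.

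For (ii), the argument as written is circular within the logical order of [Sai17b]. You reduce to the open-immersion case by "granting the compatibility of characteristic cycles with proper pushforward," but that compatibility is strictly downstream of 5.14 in Saito's development: Theorem \ref{cc7.6} of the present paper is [Sai17b, Th.\ 7.6], and the general proper pushforward formula for $CC$ is later still (and partly in [Sai17c]), whereas 5.14 sits in section 5 and is an input to those results. The appeal to $CC\circ D = CC$ raises the same concern. Even setting circularity aside, your duality step only produces $CC(Rj_*\sG)=CC(Rj_!D\sG)$, which is not the desired $CC(Rj_!\sG)$ without a further application of duality invariance to $\sG$ itself — the "symmetry" you invoke does not close the loop on its own. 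A noncircular proof of (ii) must proceed without assuming pushforward or duality compatibility of $CC$; the reduction to open immersions is sensible, but what happens after that needs to be rethought from the Milnor formula directly.
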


\subsection{}
Assume that $C$ has pure dimension $\dim_k X$. Let $f:Y\longrightarrow X$ be a $C$-transversal morphism of smooth connected $k$-schemes. We say that $f:Y\longrightarrow X$ is {\it properly $C$-transversal} if every irreducible component of $Y\times_XC$  has dimension $\dim_k Y$. Assume that $f:Y\longrightarrow X$ is properly $C$-transversal. Consider the diagram
\begin{equation*}
\xymatrix{
\bT^*Y       & \ar[l]_-{df}  Y  \times_X \bT^*X   \ar[r]^-{\pr_2}  & \bT^*X
}
\end{equation*}
 Let $A$ be a cycle in $\bT^*X$ supported on $C$. We put
\begin{equation*}
 f^!A := (-1)^{\dim_k X-\dim_k Y} df_{\ast}\pr_2^*A
\end{equation*}
The following compatibility of the characteristic cycle with properly transversal morphism is due to Saito \cite[Th. 7.6]{cc}
\begin{theorem}\label{cc7.6}
Let $\sF$ be an object in $D^b_c(X,\Lambda)$. Let $f:Y\longrightarrow X$ be a morphism of smooth connected $k$-schemes such that $f:Y\longrightarrow X$ is properly $SS(\sF)$-transversal. Then, we have
\begin{equation*}
f^{!}(CC(\sF))=CC(f^*\sF).
\end{equation*}
\end{theorem}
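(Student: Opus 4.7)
The plan is to use the characterization of the characteristic cycle by the Milnor formula. First, because $f$ is $SS(\sF)$-transversal, Beilinson's theorem gives $SS(f^*\sF)\subseteq f^{\circ}SS(\sF)$, so both $CC(f^*\sF)$ and $f^!CC(\sF)$ are cycles supported on this common closed conical subset (which under proper transversality is equidimensional of dimension $\dim_k Y$). By Saito's uniqueness result, it therefore suffices to show that $f^!CC(\sF)$ satisfies the Milnor formula for $f^*\sF$: for every \'etale $h:U\longrightarrow Y$, every smooth $k$-curve $S$, every $\phi:U\longrightarrow S$ admitting an at most $h^{\circ}SS(f^*\sF)$-isolated characteristic point $u\in U$, one must check
\begin{equation*}
-\sum_i(-1)^i\dt(R^i\Phi_{\bar u}(h^*f^*\sF,\phi))=(h^*f^!CC(\sF),[d\phi])_{\bT^*U,u}.
\end{equation*}

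The next step is to factor $f$ locally \'etale on $Y$ as a composition of a smooth morphism and a closed immersion, and to treat each case separately. Since $f^!$ is transitive on cycles (with the dimension sign bookkeeping built in) and pullback of sheaves is also transitive, we may treat the two cases independently. For a smooth morphism the conclusion is essentially formal: vanishing cycles commute with smooth base change, so the Milnor formula on $Y$ reduces to the one on $X$ after accounting for the relative dimension; the cycle-theoretic identity $df_{\ast}\pr_2^{\ast}CC(\sF)=CC(f^*\sF)$ then follows at once. Concretely, in the smooth case, the map $df$ is a closed immersion of vector bundles, and $f^!$ is the usual smooth pullback of cycles.

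The heart of the argument is the case of a properly $SS(\sF)$-transversal closed immersion $i:Y\hookrightarrow X$. Given $\phi:U\longrightarrow S$ as above, one lifts $\phi$ \'etale-locally to a morphism $\widetilde\phi:\widetilde U\longrightarrow S$ on a neighborhood $\widetilde U$ of $i(U)$ in $X$. The proper transversality of $i$ guarantees, after shrinking, that the image point $i(u)$ is an at most $SS(\sF)$-isolated characteristic point for $\widetilde\phi$: indeed the intersection $[d\widetilde\phi]\cap SS(\sF)$ lies in the fiber over $u$ thanks to the dimension-count that proper transversality provides. Applying the Milnor formula for $CC(\sF)$ to $\widetilde\phi$ reduces the problem to two distinct equalities: on the intersection side, the excess intersection formula gives
\begin{equation*}
(h^*i^!CC(\sF),[d\phi])_{\bT^*U,u}=(CC(\sF),[d\widetilde\phi])_{\bT^*\widetilde U,i(u)},
\end{equation*}
which is again a consequence of proper transversality (no excess class appears); on the vanishing cycles side, one must identify the stalk $R\Phi_{\bar u}(i^*\sF,\phi)$ with $R\Phi_{\overline{i(u)}}(\sF,\widetilde\phi)$ after totalling the dimensions. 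This last identification is obtained via a deformation to the normal bundle: one constructs a family over $\bA^1$ specialising $\widetilde U$ to the normal bundle of $Y$ in $X$ and uses the local acyclicity arising from $SS(\sF)$-transversality of the projection to $\bA^1$ to show that both the total dimension of vanishing cycles and the intersection multiplicity stay constant along the family, and then one compares at the normal-bundle fiber, where the computation is essentially linear.

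The main obstacle is this last step, the closed-immersion case. One has to carefully orchestrate the deformation to the normal cone, verify that the tautological section $d\widetilde\phi$ deforms compatibly, and invoke local acyclicity to conclude that both sides of the Milnor formula are preserved under specialization. All other ingredients---the reduction to the Milnor formula, the factorization of $f$, and the smooth case---are either formal or straightforward compatibilities.
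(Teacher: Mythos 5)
This theorem is cited in the paper directly from Saito's \cite[Th. 7.6]{cc} without proof, so there is no in-paper argument to compare against; the comparison must be with Saito's original proof.

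Your high-level plan (reduce to the uniqueness of $CC$ via the Milnor formula, factor through graph embedding into a smooth morphism followed by a closed immersion, and handle the two cases separately) does reflect the architecture of Saito's proof. The smooth case is indeed essentially smooth base change for vanishing cycles. But there is a genuine gap in your treatment of the closed immersion case, in the step where you claim that any \'etale-local lift $\widetilde\phi:\widetilde U\longrightarrow S$ of $\phi$ has $i(u)$ as an at most $SS(\sF)$-isolated characteristic point ``thanks to the dimension-count that proper transversality provides.'' Proper transversality of $i$ controls the fiber product $Y\times_X SS(\sF)$, i.e. the behavior of $SS(\sF)$ \emph{over} $Y$; it says nothing about the intersection of $[d\widetilde\phi]$ with $SS(\sF)$ over points of $\widetilde U\setminus i(U)$. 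For a poorly chosen extension $\widetilde\phi$, that intersection can have excess components, or isolated zeros accumulating at $i(u)$, in which case neither shrinking $\widetilde U$ nor the Milnor formula for $\sF$ applies. Overcoming this requires an explicit argument that a good lift exists (a genericity argument in the space of local extensions, or a more structured choice adapted to local coordinates transverse to $Y$). This is precisely the technical heart of the closed-immersion case, and waving at ``dimension-count'' skips it.

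Your proposed use of deformation to the normal cone to carry out the identification on both sides is a plausible idea but also remains a sketch: one would have to verify that the section $d\widetilde\phi$ deforms to a section of the cotangent bundle of the normal bundle in a way compatible with the specialization of $SS(\sF)$, and that the relevant local acyclicity holds along the $\bA^1$-parameter without shrinking the family in a way that kills the isolated characteristic point. These are not formalities. Saito's actual argument in \cite{cc} organizes the closed-immersion case somewhat differently (building on the local acyclicity and semi-continuity machinery of his Sections 5--7 rather than on a bare deformation-to-the-normal-cone comparison), so you should not present the deformation step as if it were routine.
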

Assume that $C$ has pure dimension $\dim_k X$. Let $f:X\longrightarrow Y$ be a $C$-transversal morphism of smooth connected $k$-schemes with $\dim_k Y \leq \dim_k X$. We say that $f:X\longrightarrow Y$ is {\it properly $C$-transversal} if for every closed point $y$ in $Y$, the fiber $y\times_Y C$  has dimension $\dim_k X-\dim_k Y$. The following theorem is due to Saito \cite[Th. 2.2.3]{SaitoDirectimage}.
\begin{theorem}\label{conductor}
Let $f:X\longrightarrow S$ be a quasi-projective morphism with $S$ a smooth connected $k$-curve. Let $\bar\eta$ be a geometric generic point of $S$. Let $s\in S$ be a closed point. Let $\sF$ be an object in $D^b_c(X,\Lambda)$. Assume that $f:X\longrightarrow S$ is proper on the
support of $\sF$ and properly $SS(\sF)$-transversal over a dense open subset in $S$. Then,
we have
\begin{equation*}
(CC(\sF), [df])_{\bT^*X,X_s}= -\sum_{i\in\mathbb Z}(-1)^i(\dim_{\Lambda}(R^if_*\sF)_{\bar\eta}-\dim_{\Lambda}(R^if_*\sF)_s+\sw_s(R^if_*\sF))
\end{equation*}
\end{theorem}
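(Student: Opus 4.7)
The plan is to combine the defining property of the characteristic cycle, namely the Milnor formula \eqref{Milnor}, with the compatibility of vanishing cycles with proper direct image together with a Grothendieck--Ogg--Shafarevich style bookkeeping on the base curve $S$. Since both sides of the desired formula are of local nature at $s$, I would first replace $S$ by its henselization at $s$ so that $R\Psi$ and $R\Phi$ on $S$ can be used freely; proper base change on the support of $\sF$ shows neither side changes. By hypothesis, $f : X \longrightarrow S$ is properly $SS(\sF)$-transversal over a dense open of $S$, hence the set of points of $X_s$ where $f$ fails to be $SS(\sF)$-transversal is finite, say $\{u_1,\dots,u_m\}$. Off these points, the section $[df]$ of $\bT^*X$ meets $SS(\sF)$ only in the zero section and thus does not contribute to the intersection; everywhere in $X_s$ the $u_j$ are at most $SS(\sF)$-isolated characteristic points of $f$.

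Applying the Milnor formula \eqref{Milnor} at each $u_j$ and summing yields
\begin{equation*}
(CC(\sF), [df])_{\bT^*X, X_s} = -\sum_{j=1}^{m}\sum_{i \in \bZ} (-1)^i \dt(R^i\Phi_{\bar u_j}(\sF, f)).
\end{equation*}
Because the residue field $k(s)$ is perfect, the identity $\dt = \dim_\Lambda + \sw$ from 1.4 splits the right-hand side into a dimension part and a Swan part. The goal is then to identify each part globally. For the dimension part, proper base change on the support of $\sF$ gives a $G$-equivariant isomorphism $R\Phi_s(Rf_*\sF) \simeq R\Gamma(X_{\bar s}, R\Phi(\sF,f))$, and since $R\Phi(\sF,f)$ is concentrated on the finite set $\{u_1,\dots,u_m\}$ by Theorem \ref{beilinsonmain}, the associated spectral sequence degenerates, yielding $R^i\Phi_s(Rf_*\sF) \simeq \bigoplus_j R^i\Phi_{\bar u_j}(\sF,f)$ as $G$-modules. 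The nearby--vanishing triangle $(Rf_*\sF)_s \to R\Psi(Rf_*\sF) \to R\Phi_s(Rf_*\sF)$ combined with $R^i\Psi(Rf_*\sF) \simeq (R^if_*\sF)_{\bar\eta}$ then matches the dimension part with $\sum_i(-1)^i(\dim(R^if_*\sF)_{\bar\eta} - \dim(R^if_*\sF)_s)$.

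The Swan part is handled the same way, now invoking the additivity of $\sw$ on short exact sequences of continuous $G$-representations, plus the fact that $(Rf_*\sF)_s$ is tame (the $G$-action on the stalk at $s$ is trivial). This gives $\sum_i (-1)^i \sw(R^i\Phi_s(Rf_*\sF)) = \sum_i (-1)^i \sw_s(R^if_*\sF)$, and the sum over $j$ of the local Swan contributions matches this by the same degeneration argument. Assembling the two parts reproduces the right-hand side with the correct sign. The main technical obstacle is step three, namely upgrading the compatibility $R\Phi_s(Rf_*\sF) \simeq Rf_* R\Phi(\sF,f)$ to an identification that respects the monodromy action well enough to transport Swan conductors, and verifying that the isolated-point support of $R\Phi(\sF,f)$ forces degeneration of the relevant spectral sequence; the quasi-projective and properness-on-support hypotheses are precisely what allow invocation of proper base change and of the compatibility of nearby cycles with $Rf_*$ in a $G$-equivariant manner.
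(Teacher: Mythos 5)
The paper does not prove this statement: it is quoted verbatim from Saito's \emph{Characteristic cycles and the conductor of direct image}, Theorem 2.2.3, and used as a black box, so there is no internal proof to compare against. Judging your attempt on its own terms, the overall architecture is the right one (Milnor formula, proper base change, compatibility of vanishing cycles with proper direct image, additivity of the Swan conductor, the identity $\dt=\dim+\sw$ over a perfect residue field), and the sign bookkeeping and the spectral sequence degeneration argument for a complex supported on finitely many points are all sound.

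However, there is a genuine gap at the very first reduction, and it is not cosmetic. You assert that because $f$ is properly $SS(\sF)$-transversal over a dense open $V\subset S$, the set of points of $X_s$ where $f$ fails to be $SS(\sF)$-transversal is finite. That inference is false. The hypothesis only guarantees that the non-transversal locus is contained in the finitely many bad fibres $\bigcup_{s'\notin V}X_{s'}$; it says nothing about its geometry inside a single bad fibre $X_s$, which can be all of $X_s$. This is in fact the generic situation in this paper: if $\sF$ is a lisse sheaf on $X\setminus X_s$ extended by zero and wildly ramified along $X_s$, then $\bT^*_{X_s}X\subseteq SS(j_!\sF)$, and $df$ lies in $\bT^*_{X_s}X$ over every point of $X_s$, so $f$ is $SS(j_!\sF)$-transversal at no point of $X_s$. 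In that case there are no $SS(\sF)$-isolated characteristic points to which the Milnor formula \eqref{Milnor} applies, and the left-hand side $(CC(\sF),[df])_{\bT^*X,X_s}$ must be understood via Fulton-style refined intersection rather than as a finite sum of local intersection multiplicities; likewise the vanishing cycles complex $R\Phi(\sF,f)$ need not have finite support, so the degeneration of the Leray-type spectral sequence that feeds your dimension and Swan computations breaks down. Closing the gap requires the real content of Saito's argument, namely a reduction (after suitable blow-ups or a deformation argument on $S$) to the isolated-characteristic-point situation while tracking how both sides of the identity transform, which is precisely the step you flagged as ``the main technical obstacle'' but did not supply. As written, your proof establishes the formula only under the extra hypothesis that $f$ has at most finitely many $SS(\sF)$-characteristic points in $X_s$.
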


\subsection{}
Let $f:Y\longrightarrow X$ be a separated morphism of smooth $k$-schemes.
Let $\sF$ be an object in $D^b_c(X,\Lambda)$.
There is a canonical morphism
\begin{equation}\label{cyclemap}
c_{f,\sF}:f^*\sF\otimes_{\Lambda}^L Rf^!\Lambda\longrightarrow Rf^!\sF
\end{equation}
obtained by adjunction from the composition
\begin{equation*}
Rf_!(f^*\sF\otimes^L_{\Lambda}Rf^!\Lambda)\xrightarrow{\sim} \sF\otimes^L_{\Lambda}Rf_!Rf^!\Lambda\longrightarrow\sF
\end{equation*}
where the first arrow is the projection formula and where the second arrow is induced by the adjunction $Rf_!Rf^!\Lambda\longrightarrow \Lambda$.

The following proposition is due to Saito \cite[Prop. 8.13]{cc}.
\begin{proposition}\label{8.13}
Let $\sF$ be an object in $D^b_c(X,\Lambda)$.
Let $f:Y\longrightarrow X$ be a separated morphism of smooth $k$-schemes.
If $f:Y\longrightarrow X$ is $SS(\sF)$-transversal, then the canonical morphism \eqref{cyclemap} is an isomorphism.
\end{proposition}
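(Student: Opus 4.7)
The plan is to reduce to the case of a smooth divisor and then exploit the weak micro-support characterization from Theorem \ref{beilinsonmain} together with a base change argument coming from universal local acyclicity.

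First I would factor $f$ locally through its graph as $f=\pi_2\circ\Gamma_f$, where $\Gamma_f:Y\hookrightarrow Y\times X$ is the closed immersion given by the graph and $\pi_2:Y\times X\to X$ is the smooth projection. The chain rule for differentials shows that $\Gamma_f$ is $\pi_2^{\circ}SS(\sF)$-transversal. For the smooth morphism $\pi_2$, the statement is immediate: both $R\pi_2^!\sF$ and $\pi_2^*\sF\otimes_{\Lambda}^L R\pi_2^!\Lambda$ equal $\pi_2^*\sF(\dim Y)[2\dim Y]$, and $c_{\pi_2,\sF}$ is a tautological isomorphism. Using the compatibility of $c_{\bullet,\bullet}$ with composition, the problem reduces to closed immersions. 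Writing a closed immersion of smooth schemes locally as a chain of inclusions of smooth divisors, an induction on codimension further reduces the question to the case of a smooth divisor $i:Y\hookrightarrow Z$.

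Next, for $i:Y\hookrightarrow Z$ a smooth divisor defined locally by a function $t:Z\to\mathbb{A}^1$ with $Y=t^{-1}(0)$, the conormal bundle $T^*_YZ$ is spanned along $Y$ by the image of $dt$. The $SS(\sF)$-transversality of $i$ is equivalent to saying that the section $dt$ of $\bT^*Z$ meets $SS(\sF)$ only at the zero section above $Y$, which is precisely the condition that the pair $(t,\id):\mathbb{A}^1\xleftarrow{t}Z\xrightarrow{\id}Z$ is $SS(\sF)$-transversal. Beilinson's characterization of $SS(\sF)$ as the smallest closed conical set on which $\sF$ is weakly micro-supported (Theorem \ref{beilinsonmain}) then guarantees that $t:Z\to\mathbb{A}^1$ is universally locally acyclic with respect to $\sF$ in a neighborhood of $Y$.

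The final step is to extract from this local acyclicity that $c_{i,\sF}$ is an isomorphism. The key input is that the closed immersion $i_0:\{0\}\hookrightarrow\mathbb{A}^1$ trivially satisfies $c_{i_0,\Lambda}:\Lambda(-1)[-2]\iso Ri_0^!\Lambda$. Universal local acyclicity of $t:Z\to\mathbb{A}^1$ relative to $\sF$ lets us base-change this identity along $t$ and identify $Ri^!\sF$ with $i^*\sF\otimes_{\Lambda}^L Ri^!\Lambda$ via $c_{i,\sF}$. The main obstacle in this outline is carrying out the last base change carefully: one must verify that the canonical morphism $c_{i,\sF}$ is indeed the one produced by this identification, which is a formal but delicate check using the compatibility of upper shriek functors with smooth base change and the precise content of universal local acyclicity.
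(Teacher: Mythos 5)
The paper does not reprove this statement: it is quoted verbatim from Saito's paper on singular supports and characteristic cycles and referenced as \cite[Prop.~8.13]{cc}, with no argument reproduced here. So there is no in-text proof against which to compare your proposal; I'll assess it directly.

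Your overall route — factor $f$ through its graph to reduce to a closed immersion, then reduce to a smooth divisor, then translate $SS(\sF)$-transversality into universal local acyclicity via the micro-support characterization, and finally extract the isomorphism from local acyclicity — is the right strategy and, as far as I can reconstruct, matches Saito's argument in spirit. Two points, however, need attention. First, the reduction from a closed immersion of codimension $c$ to a chain of smooth divisor inclusions requires more than linear algebra: to apply the divisor case to the intermediate restriction $\sF|_{Y_{k-1}}$ you must control $SS(\sF|_{Y_{k-1}})$, and for that you need the non-characteristic restriction inequality $SS(h^*\sF)\subseteq h^{\circ}SS(\sF)$ for a $SS(\sF)$-transversal $h$ (this is a theorem, not a triviality, and you don't invoke it). A cleaner route that bypasses the induction entirely is to take local coordinates $t=(t_1,\dots,t_c):Z\to\mathbb A^c$ cutting out $Y$; the $SS(\sF)$-transversality of $i$ at points of $Y$ is exactly the $SS(\sF)$-transversality of $t$ along $Y$, hence — by closedness of $SS(\sF)$ and the micro-support property, after shrinking $Z$ — $t$ is universally locally acyclic relative to $\sF$, and one concludes by relative purity for ULA morphisms (the statement that for $t$ ULA relative to $\sF$ and $i$ the inclusion of a fibre, $c_{i,\sF}$ is an isomorphism). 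Second, a small but real imprecision: what you need in the transversality-to-acyclicity step is the full \emph{micro-support} property of $SS(\sF)$, not the \emph{weak} micro-support characterization; Theorem~\ref{beilinsonmain} asserts both agree, so this is a wording slip rather than a mathematical error, but you are using the wrong half of the theorem. The final purity step is, as you say, the delicate point; it is correct in outline, but it is precisely the ``Künneth for ULA'' input that carries the weight, and simply saying one ``base-changes carefully'' underestimates what needs to be proved there.
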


\begin{corollary}\label{basechange}
Let
\begin{equation*}
\xymatrix{\relax
V\ar[r]^{h'}\ar[d]_{j'}\ar@{}|-{\Box}[rd]&U\ar[d]^j\\
W\ar[r]_h&X}
\end{equation*}
be a cartesian diagram of smooth schemes over $k$ such that the vertical arrows are open immersions and such that $h$ is separated.
Let $\sF$ be an object in $D^b_c(X,\Lambda)$.
Suppose that  $h : W\longrightarrow X$ is $SS(Rj_*\sF)$-transversal.
Then, there is an isomorphism in $D^b_c(X,\Lambda)$
\begin{equation}\label{keybc}
\xymatrix{
h^*Rj_*\sF \ar[r]^-{\sim} &  Rj'_*h'^*\sF.
}
\end{equation}
\end{corollary}
\begin{proof}
Since the schemes $X$ and $W$ are smooth, we have
$Rh^!\Lambda\cong \Lambda(c)[2c]$ and $Rh^{\prime !}\Lambda\cong \Lambda(c)[2c]$ for some integer $c$.
To prove corollary \ref{basechange}, it is thus enough to prove that the base change morphism
\begin{equation}\label{bc1}
\xymatrix{
h^*Rj_*\sF \otimes_{\Lambda}^L Rh^!\Lambda \longrightarrow  Rj'_*(h'^*\sF\otimes_{\Lambda}^L Rh^{\prime !}\Lambda).
}
\end{equation}
is an isomorphism.
Since $h:W\longrightarrow X$ is $SS(Rj_*\sF)$-transversal, proposition \ref{8.13} implies that the canonical morphism
$$
c_{h,Rj_*\sF}:h^*Rj_*\sF\otimes_{\Lambda}^L Rh^!\Lambda \longrightarrow Rh^! Rj_*\sF
$$
is an isomorphism.
On the other hand, the fact that $h:W\longrightarrow X$ is $SS(Rj_*\sF)$-transversal implies that $h':V\longrightarrow U$ is $SS(\sF)$-transversal.
Hence, proposition \ref{8.13} implies that the canonical morphism
$$
c_{h',\sF}:h'^*\sF\otimes_{\Lambda}^L Rh^{\prime !}\Lambda \longrightarrow Rh^{\prime !} \sF
$$
is an isomorphism.
Hence, the base change morphism \eqref{bc1} identifies with
\begin{equation}\label{bc2}
\xymatrix{
Rh^! Rj_*\sF \longrightarrow  Rj'_* Rh^{\prime !} \sF.
}
\end{equation}
By proper base change theorem \cite[XVIII 3.1.13]{SGA4-3}, the morphism \eqref{bc2} is an isomorphism, and proposition \ref{basechange} follows.
\end{proof}

\subsection{}\label{ccnondeg}
Let $D$ be a smooth connected divisor in $X$. Put $U:=X\setminus D$. Let $j:U\longrightarrow X$ be the canonical injection. Let  $\xi$ be the generic point of $D$, $K$ the fraction field of the henselization $X_{(\xi)}$ of $X$ at $\xi$, $\eta=\spec(K)$ the generic point of $X_{(\xi)}$, $\overline K$ a separable closure of $K$, $F$ the residue field of the ring of integer $\sO_K$ of $K$, $\overline F$ the residue field of the ring of integer $\sO_{\overline K}$ of $\overline K$, and let $G_K$ be the Galois group of $\overline K$ over $K$.  Let $\sF$ be a locally constant and constructible sheaf of $\Lambda$-modules on $U$.

We suppose that the ramification of $\sF$ along $D$ is non-degenerate. In a nutshell, this means that \'etale locally along $D$, the sheaf $\sF$  is a direct sum of sheaves of $\Lambda$-modules which are isoclinic at $\xi$ and whose ramification  at a closed point of $D$ is controlled by the ramification at $\xi$. See \cite[Def. 3.1]{wr} for details. Under the non-degenerate condition, the characteristic cycle of $j_!\sF$ can be expressed in terms of ramification theory. Let us explain how.

Let $M$ be the finitely generated $\Lambda$-module  with continuous $G_K$-action corresponding to $\sF|_{\eta}$. Let
 \begin{equation*}
 M=\bigoplus_{r\in \bQ_{\geq 1}}M^{(r)}
 \end{equation*}
 be the slope decomposition of $M$. For $r\in \mathbb{Q}_{>1}$, let
 \begin{equation*}
 M^{(r)}=\bigoplus_{\chi}M^{(r)}_{\chi}
 \end{equation*}
be the central character decomposition of $M^{(r)}$, for $r>1$. Let $\psi$ be a non-trivial character $\psi:\bF_p\longrightarrow \Lambda^{\times}$. For $r\in\mathbb Q_{>1}$, the graded piece $G^r_K/G^{r+}_K$ is abelian and $p$-torsion. Hence, each central character $\chi$ uniquely factors as $\chi:G^r_K/G^{r+}_K\longrightarrow \bF_p\xrightarrow{\psi}\Lambda^{\times}$. We also denote by $\chi$ the induced character $G^r_K/G^{r+}_K\longrightarrow \bF_p$ and by
\begin{equation*}
\ch(\chi):\fm^r_{\overline K}/\fm^{r+}_{\overline K}\longrightarrow\Omega^1_{\sO_K}\otimes_{\sO_K}\overline F
\end{equation*}
the characteristic form of $\chi$. Let $F_\chi$ be a finite extension of $F$ contained in $\overline F$ such that $\ch(\chi)$ is defined over $F_{\chi}$. The characteristic form $\ch(\chi)$ defines a line in $\bT^*X\times_X\spec F_{\chi}$. Let $\overline L_{\chi}$ be the closure of the image of $L_{\chi}$ in $\bT^*X$. If the ramification of $j_!\sF$ is non-degenerate along $D$, we have \cite[Th.~7.14]{cc}:
\begin{align}\label{CCnondeg}
CC(j_!\sF)=(-1)^{\dim_k X} \bigg(&\rk_{\Lambda}\sF\cdot[\bT^*_{X}X]+\dim_{\Lambda}M^{(1)}\cdot[\bT^*_{D}X]\\
&+\sum_{r>1}\sum_{\chi}\frac{r\cdot\dim_{\Lambda}M^{(r)}_{\chi}}{[F_{\chi}:F]}[\overline L_{\chi}]       \bigg)\nonumber.
\end{align}
When $\sF$ is not trivial, the non-degeneracy of the ramification of $\sF$ along $D$ implies that for every geometric point $\bar x\longrightarrow D$, the fiber $SS(j_!\sF)\times_X \bar x$ is a finite union of $1$-dimensional vector spaces in $\bT^*_{\bar x}X$.

In general, the ramification of $\sF$ along $D$ becomes non-degenerate after removing a codimension one closed subset in $D$ (cf. \cite[Lemma 3.2]{wr}).


\section{Semi-continuity for the largest slope}

\subsection{}\label{irrdiv}
Let $k$ be a perfect field of characteristic $p>0$. Let $\Lambda$ be a finite field of characteristic $\ell\neq p$. Let $X$ be a smooth $k$-scheme. Let $D$ be a smooth connected divisor in $X$. Put $U:=X\setminus D$. Let $j:U\longrightarrow X$ be the canonical injection. Let $\xi$ the generic point of $D$, $K$ the fraction field of the henselization $X_{(\xi)}$ of $X$ at $\xi$, $\eta=\spec (K)$ the generic point of $X_{(\xi)}$, $\overline K$ an algebraic closure of $K$, and let $G_K$ be the Galois group of $\overline K$ over $K$.

\subsection{}\label{irrdivsheaf}
Let $\sF$ be a locally constant and constructible sheaf of $\Lambda$-modules on $U$. The restriction $\sF|_{\eta}$ corresponds to a  finitely generated $\Lambda$-module with continuous $G_K$-action. Let $r(D,\sF)$ be the largest slope of $\sF|_{\eta}$. Set $R_D(\sF):=r(D,\sF)\cdot D$ and we call it the {\it largest slope divisor} of $\sF$. Let $r_{\log}(D,\sF)$ be the largest logarithmic slope of $\sF|_{\eta}$. Set $R_{D,\log}(\sF)=r_{\log}(D,\sF)\cdot D$ and we call it the {\it largest logarithmic slope divisor} of $\sF$. Similarly, we denote by $\dt_D(\sF)$ the total dimension of $\sF|_{\eta}$, by
\begin{equation*}
\DT_D(\sF):=\dt_D(\sF)\cdot D
\end{equation*}
the {\it total dimension divisor} of $\sF$, by $\sw_D(\sF)$ the Swan conductor of $\sF|_{\eta}$ and by
\begin{equation*}
\SW_D(\sF):=\sw_D(\sF)\cdot D
\end{equation*}
the {\it Swan divisor} of $\sF$.

In  \cite{HY17,Hu}, the authors studied the behavior of the total dimension divisors and the Swan divisors by pull-back, and deduced  several semi-continuity properties.  In this section, we pursue a similar study for the largest slope divisor and the largest logarithmic slope divisor. For an analogue for differential systems in characteristic $0$, we refer to \cite{Andre}.

\vspace{0.2cm}
In the following of this section, we assume that $k$ is algebraically closed.

\begin{proposition}\label{slope'}
\begin{itemize}
\item[(1)]{ \cite[Prop. 2.22.2]{wr}}
Let $C$ be a smooth $k$-curve in $X$ meeting $D$ transversally at a closed point $x$. We put $C_0=C\backslash \{x\}$.
 Then, if the ramification of $\sF$ along $D$ is non-degenerate at $x$ and the immersion $i:C\longrightarrow X$ is $SS(j_!\sF)$-transversal at $x$, then  \begin{equation*}
r(x,\sF|_{C_0})= r(D,\sF).
\end{equation*}
\item[(2)]
The following two conditions are equivalent:
\begin{itemize}
\item[(a)]
$r\geq r(D,\sF)$; 
\item[(b)]
$r\geq r(x,\sF|_{C_0})$ for any smooth $k$-curve $C$ in $X$ meeting $D$ transversally at a closed point $x$, where $C_0=C\setminus \{x\}$.
\end{itemize}
\end{itemize}
\end{proposition}

\begin{proof}
 Since the slopes are unchanged by scalar extensions, we can assume that $\Lambda$ contains a primitive $p$-th root of unity.

$(1)$ Since the ramification of $\sF$ along $D$ is non-degenerate at $x$, at the cost of replacing $X$ by an \'etale neighborhood of $x$, we can suppose that $\sF$ is a direct sum of locally constant and constructible sheaves $\{\sF_\alpha\}_{\alpha\in I}$ of $\Lambda$-modules on $U$ such that for every $\alpha\in I$, the ramification of $\sF_\alpha$ along $D$ is non-degenerate and  $\sF_\alpha|_{\eta}$ is isoclinic. Since $i:C\longrightarrow X$ is $SS(j_!\sF)$-transversal at $x$, it is $SS(j_!\sF_{\alpha})$-transversal at $x$ for every $\alpha\in I$. Hence, proposition \ref{slope'} $(1)$ is a direct consequence of \cite[Prop. 2.22.2]{wr}.

$(2)$ We firstly show (a)$\Rightarrow $(b). It is sufficient to show that $r(D,\sF)\geq r(x,\sF|_{C_0})$ for a smooth $k$-curve $C$ in $X$ meeting $D$ transversally at a closed point $x$.
At the cost of replacing $X$ by an \'etale neighborhood of $x$, we can suppose that there exists a smooth morphism $f:X\longrightarrow T$ with $T$ a smooth $k$-curve, and a closed point $t\in T$ such that $D=f^{-1}(t)$.
From lemma \ref{Nwithprescribedslope}, we can find a rational number $s$ such that $s>r(D,\sF)$, that $s$ is not a slope of $\sF|_{C_0}$ at $x$ and that there exists a locally constant and constructible sheaf of $\Lambda$-modules $\sN$ on the generic point of $T_{(t)}$ such that $\sN$ is isoclinic with slope $s$.
At the cost of replacing $T$  by an \'etale neighborhood of $t$, we can suppose that $\sN$  extends to $T \setminus \{t\}$. We still denote by $\sN$ this extension.
Let $N$ be the rank of $\sN$.
Let us consider the sheaf $f^*\sN\otimes_{\Lambda}\sF$ on $U=X\setminus D$.
Since $f:X\longrightarrow T$ is smooth, it is transversal with respect to the extension by $0$ of $\sN$  to $T$.
From \cite[Prop. 2.22.2]{wr}, we deduce that $f^*\sN|_{\eta}$ is isoclinic with slope $s$.
Hence, by lemma \ref{tensordtsw} $(i)$, we have
\begin{equation*}
\dt_D(f^*\sN\otimes_{\Lambda}\sF)=N\cdot\rk_{\Lambda}\sF\cdot s.
\end{equation*}
From \cite[Prop. 4.1]{HY17}, we deduce
\begin{equation*}
\dt_x((f^*\sN\otimes_{\Lambda}\sF)|_{C_0}) \leq N\cdot\rk_{\Lambda}\sF\cdot s.
\end{equation*}
Notice that $f|_C:C\longrightarrow T$ is \'etale at $x$ since $C$ and $D$ intersect transversally at $x$.
Hence, the restriction $(f^*\sN)|_{C_0}$ is isoclinic with slope $s$ at $x$.
Lemma \ref{tensordtsw} $(i)$ then gives
\begin{equation*}
\dt_x((f^*\sN\otimes_{\Lambda}\sF)|_{C_0}) = N\cdot\rk_{\Lambda}\sF\cdot s.
\end{equation*}
Hence, lemma \ref{tensordtsw} $(i)$ ensures that $s>r(x,\sF|_{C_0})$.
Since this holds for every rational number $s>r(D,\sF)$ with $s=s_1/s_2$ and $(s_1,p)=(s_2,p)=1$, which is not a slope of $\sF|_{C_0}$ at $x$, we obtain that $r(D,\sF)\geq r(x,\sF|_{C_0})$.

(b)$\Rightarrow$(a). We only need to treat the case where $\dim_k X\geq 2$.  Let $x$ be a closed point in $D$ such that the ramification of $\sF$ is non-degenerate at $x$. In particular, the fiber of  $SS(j_{!}\sF)$ above $x$ is a finite number of lines $L_1, \dots, L_d\subset \bT^*_x X$. Since $\bT_xX$ is a $k$-vector space of dimension $\geq 2$, we can take a non-zero vector $L\in\bT_xX$ away from the hyperplane $\bT_xD\subset \bT_xX$ and dual hyperplanes $L_i^{\vee}\subset \bT_xX$ $(1\leq i\leq d)$.  Let $C$ be a smooth curve in $X$ passing through $x$ such that $\bT_xC=k\cdot L$. Then, the curve $C$ meets $D$ transversally and the immersion $i:C\longrightarrow X$ is $SS(j_!\sF)$-transversal at $x$. By proposition \ref{slope'} $(1)$, we obtain that $r(D,\sF)=r(x,\sF|_{C_0})$ for this curve $C$. It finishes the proof of (b)$\Rightarrow$(a).
\end{proof}

\begin{corollary}\label{slopehd}
Let $Y$ be a smooth connected $k$-scheme. Let $g:Y\longrightarrow X$ be a separated morphism such that $E:=D\times_XY$ is an irreducible smooth divisor in $Y$. We put $V=Y\setminus E$ and let $j^{\prime}: V\longrightarrow Y$ be the canonical injection. Then, we have
\begin{equation*}
r(E,\sF|_V)\leq r(D,\sF)
\end{equation*}
\end{corollary}
\begin{proof}
The morphism $g:Y\longrightarrow X$ is a composition of the graph embedding $\Gamma_g:Y\longrightarrow Y\times_k X$ and the projection $\pr_2:Y\times_k X\longrightarrow X$. Since $\pr_2$ is smooth, proposition 2.22.2 from \cite{wr}
implies
\begin{equation*}
r(D\times_kY, \sF|_{U\times_kY})=r(D,\sF).
\end{equation*}
We are thus reduced to treat the case where $g:Y\longrightarrow X$ is a closed immersion. By proposition \ref{slope'} $(2)$, we are left to show that, for any smooth $k$-curve in $Y$ meeting $E$ transversally at $y\in E$, we have $r(D,\sF)\geq r(y,\sF|_{C_0})$, where $C_0=C\backslash\{y\}$. Notice that $E=D\times_XY$ is smooth implies that $Y$ and $D$ meets transversally. Hence the smooth $k$-curve $C$ above meets $D$ transversally at $y$. By proposition \ref{slope'} $(2)$ again, we have $r(D,\sF)\geq r(y,\sF|_{C_0})$ indeed.
\end{proof}


\begin{proposition}\label{rlogDFrlogxFC}
The following two conditions are equivalent:
\begin{itemize}
\item[(a)]
$r\geq r_{\log}(D,\sF)$;
\item[(b)]
$(C,D)_x\cdot r\geq r_{\log}(x,\sF|_{C_0})$ for any smooth $k$-curve $C$ in $X$ meeting $D$ properly at $x$ such that the intersection number $(C,D)_x$ is co-prime to $p$, where $C_0=C\backslash\{x\}$.
\end{itemize}
\end{proposition}
\begin{proof}
(a)$\Rightarrow$(b). It is sufficient to show that, for a smooth $k$-curve $C$ in $X$ meeting $D$ properly at $x$ such that the intersection number $m=(C,D)_x$ is co-prime to $p$, we have
\begin{equation}\label{mrDFgeq rxFC}
m\cdot r_{\log}(D,\sF)\geq r_{\log}(x,\sF|_{C_0}). 
\end{equation}
At the cost of replacing $X$ by an \'etale neighborhood of $x$, we can suppose that there exists a smooth morphism $f:X\longrightarrow T$ with $T$ a smooth $k$-curve, and a closed point $t\in T$ such that $D=f^{-1}(t)$.
From lemma \ref{Nwithprescribedslope}, we can find a rational number $s$ such that $s>r_{\log}(D,\sF)$, that $m\cdot s$ is not a logarithmic slope of $\sF|_{C_0}$ at $x$ and that there exists a locally constant and constructible sheaf of $\Lambda$-modules $\sN$ on the generic point of $T_{(t)}$ such that $\sN$ is logarithmic isoclinic with logarithmic slope $s$.
At the cost of replacing $T$  by an \'etale neighborhood of $t$, we can suppose that $\sN$  extends to $T \setminus \{t\}$. We still denote by $\sN$ this extension.
Let us consider the sheaf $f^*\sN\otimes_{\Lambda}\sF$ on $U=X\setminus D$.
Since $f:X\longrightarrow T$ is smooth,
from \cite[Lemma 1.22]{logcc}, we deduce that $f^*\sN|_{\eta}$ is logarithmic isoclinic with isoclinic slope $s$.
Hence, by lemma \ref{tensordtsw} $(ii)$, we have
\begin{equation*}
\sw_D(f^*\sN\otimes_{\Lambda}\sF)=N\cdot\rk_{\Lambda}\sF\cdot s.
\end{equation*}
From \cite[Th. 6.6]{Hu}, we deduce
\begin{equation*}
\sw_x((f^*\sN\otimes_{\Lambda}\sF)|_{C_0}) \leq m\cdot N\cdot\rk_{\Lambda}\sF\cdot s.
\end{equation*}
Notice that $f|_C:C\longrightarrow T$ is tamely ramified at $x$ with the ramification index $m$, the restriction $(f^*\sN)|_{C_0}$ is isoclinic with slope $m\cdot s$ at $x$. Lemma \ref{tensordtsw} $(ii)$ then gives
\begin{equation*}
\sw_x((f^*\sN\otimes_{\Lambda}\sF)|_{C_0}) = m\cdot N\cdot\rk_{\Lambda}\sF\cdot s.
\end{equation*}
Hence, lemma \ref{tensordtsw} $(ii)$ ensures that $m\cdot s>r_{\log}(x,\sF|_{C_0})$.
Since this holds for every $s>r_{\log}(D,\sF)$ with $s=s_1/s_2$ and $(s_1,p)=(s_2,p)=1$ such that $m\cdot s$ is not a logarithmic slope of $\sF|_{C_0}$ at $x$, we obtain \eqref{mrDFgeq rxFC}.

(b)$\Rightarrow$(a). We only need to treat the case where $\dim_kX\geq 2$. Let $m$ be a positive integer co-prime to $p$.  By \cite[Prop. 6.4]{Hu}, after replacing $X$ by an affine Zariski neighborhood of $\xi$, we have two $k$-morphisms $i:C\longrightarrow X'$ and $g:X'\longrightarrow X$ satisfying:
\begin{itemize}
\item[(1)]
The scheme $X'$ is a smooth over $k$ with an integral and smooth divisor $D'=(g^*D)_{\mathrm{red}}$ and $g':X'\longrightarrow X$ is tamely ramified along $D$ with ramification index $m$;
\item[(2)]
The ramification of $\sF|_{U'}$ along $D'$ is non-degenerate, where $U'=g^{-1}(U)$;
\item[(3)]
The morphism $i:C\longrightarrow X'$ is an $SS(g^*j_!\sF)$-transversal immersion from a smooth $k$-curve to $X'$ such that $C$ and $D'$ meet transversally at a closed point $x\in D'$ and that the composition $g\circ i$ is also an immersion.  
\end{itemize}
Notice that $m=(C,D)_x$. By proposition \ref{slope'} (1), we have 
\begin{equation}\label{rx'FC>rD'FU'}
r_{\log}(x, \sF|_{C_0})+1=r(D', \sF|_{U'}),
\end{equation}
By proposition \ref{propramfil}, we have 
\begin{equation}\label{rD'FU'=mrlogDF}
r(D', \sF|_{U'})\geq r_{\log}(D', \sF|_{U'})=m\cdot r_{\log}(D, \sF).
\end{equation}
Combining \eqref{rx'FC>rD'FU'} and \eqref{rD'FU'=mrlogDF},  we get
\begin{equation}
\frac{r_{\log}(x, \sF|_{C_0})}{(C,D)_x}\geq  r_{\log}(D, \sF)-\frac{1}{m}.
\end{equation}
Hence the supremum of $\frac{r_{\log}(x, \sF|_{C_0})}{(C,D)_x}$ is greater than or equal to $r_{\log}(D, \sF)$, by taking all smooth $k$-curves $C$ in $X$ meeting $D$ properly at some closed point $x$ such that the intersection number $m=(C,D)_x$ is co-prime to $p$. We obtain (b)$\Rightarrow$(a).

\end{proof}


\begin{corollary}\label{logslopehd}
Let $Y$ be a smooth connected $k$-scheme. Let $g:Y\longrightarrow X$ be a separated morphism such that $E:=D\times_XY$ is an irreducible smooth divisor in $Y$. We put $V=Y\setminus E$ and let $j^{\prime}: V\longrightarrow Y$ be the canonical injection. Then, we have
\begin{equation*}
r_{\log}(E,\sF|_V)\leq r_{\log}(D,\sF)
\end{equation*}
\end{corollary}
\begin{proof}
It is enough to treat the case where $k$ is algebraically closed.
The morphism $g:Y\longrightarrow X$ is a composition of the graph embedding $\Gamma_g:Y\longrightarrow Y\times_k X$ and the projection $\pr_2:Y\times_k X\longrightarrow X$. Since $\pr_2$ is smooth, it is
$SS(j_!\sF)$-transversal. Hence, proposition 1.22 from \cite{{logcc}}
implies
\begin{equation*}
r_{\log}(D\times_kY, \sF|_{U\times_kY})=r_{\log}(D,\sF).
\end{equation*}
We are thus reduced to treat the case where $g:Y\longrightarrow X$ is a closed immersion. Applying proposition \ref{rlogDFrlogxFC} to the sheaf $\sF|_V$ ramified along $E$, we are sufficient to show that $(C,E)_x\cdot r_{\log}(D,\sF)\geq r_{\log}(x,\sF|_{C_0})$ for any smooth $k$-curve $C$ in $Y$ meeting $E$ properly at $x$ such that the intersection number $(C,Y)_x$ is co-prime to $p$, where $C_0=C\backslash\{x\}$. Notice that $Y\subseteq X$ is a closed subscheme and that $Y$ and $D$ meet transversally. Hence the smooth $k$-curve $C$ above is a curve in $X$ meeting $D$ properly at $x$ with $(C,D)_x=(C,E)_x$. Applying proposition \ref{slope'} $(2)$ to the sheaf $\sF$ ramified along $D$, we have $(C,D)_x\cdot r_{\log}(D,\sF)\geq r_{\log}(x,\sF|_{C_0})$. We finish the proof of this corollary.
\end{proof}

\section{Characteristic cycle of a sheaf after twist}

\begin{lemma}\label{makerelcurve}
 Let $k$ be an algebraically closed field of characteristic $p>0$. Let $X$ be a smooth connected $k$-scheme of dimension $d\geq 2$. Let $D$ be a smooth connected divisor in $X$. Let $x$ be a closed point in $D$. Let $g:X\longrightarrow \bA^1_k$ be a smooth morphism such that $g|_D:D\longrightarrow\bA^1_k$ is smooth. Then, at the cost of replacing $X$ by a suitable Zariski neighborhood of $x$, there exists a map $h:X\longrightarrow \bA^{d-1}_k$ such that
\begin{itemize}
\item[(1)]
$g$ is the composition of $h$ and the projection $\pr_1:\bA^{d-1}_k\longrightarrow\bA^1_k$;
\item[(2)]
$h:X\longrightarrow \bA^{d-1}_k$ is  smooth and the restriction $h|_D:D\longrightarrow \bA^{d-1}_k$ is \'etale.
\end{itemize}
\end{lemma}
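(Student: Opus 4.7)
The plan is to build $h$ by adjoining $d-2$ regular functions to $g$, chosen so that together with $g|_D$ they form an étale coordinate system on $D$ at $x$. First I would use the smoothness of $g|_D\colon D\to\bA^1_k$ at $x$: the differential $dg|_{D,x}$ is a nonzero element of the $(d-1)$-dimensional cotangent space $\bT^*_x D$, so I can complete it to a basis $(dg|_{D,x}, d\bar h_2,\ldots, d\bar h_{d-1})$ of $\bT^*_x D$ for some $\bar h_2,\ldots,\bar h_{d-1}\in\sO_{D,x}$. I would then lift each $\bar h_i$ to an element $h_i\in\sO_{X,x}$ via the surjection $\sO_{X,x}\twoheadrightarrow\sO_{D,x}$, and, after replacing $X$ by an affine open neighborhood of $x$ on which $g$ and $h_2,\ldots,h_{d-1}$ are all defined, set $h=(g,h_2,\ldots,h_{d-1})\colon X\to\bA^{d-1}_k$.

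Next I would check (1) and (2) at the point $x$; openness of smoothness and étaleness will then justify a further Zariski shrinking of $X$ to obtain the lemma. Condition (1) holds by construction. For $h|_D$, the differential at $x$ is precisely the basis $(dg|_{D,x}, d\bar h_2,\ldots,d\bar h_{d-1})$ of $\bT^*_x D$, so $h|_D$ is unramified at $x$; since source and target both have dimension $d-1$, it is in fact étale at $x$. For smoothness of $h$ at $x$, I would note that the kernel of the canonical surjection $\bT^*_x X\twoheadrightarrow \bT^*_x D$ is the one-dimensional subspace generated by $dt_x$ for $t$ any local equation of $D$ at $x$; since the images of $dg_x, dh_{2,x},\ldots,dh_{d-1,x}$ form a basis of $\bT^*_x D$, these $d-1$ elements are themselves linearly independent in $\bT^*_x X$, which is exactly the surjectivity of the cotangent map of $h$ at $x$, i.e.\ smoothness of $h$ at $x$.

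There is no genuine obstacle in this argument: the content is purely local linear algebra in the cotangent space at $x$, and both conclusions are immediate consequences of the chosen basis once it has been lifted. The only mild point to handle is that the lifts $h_i$ are a priori only germs, so the passage from $\sO_{X,x}$ to a Zariski neighborhood of $x$ on which $h$ is a bona fide morphism to $\bA^{d-1}_k$ is precisely what necessitates the shrinking of $X$ allowed by the statement.
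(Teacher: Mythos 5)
Your proof is correct and takes essentially the same approach as the paper: in both arguments one completes the nonzero covector $dg|_{D,x}$ to a basis of $\bT^*_xD$, lifts the supplementary covectors to regular functions on a neighborhood of $x$, combines them with $g$ to define $h$, and verifies smoothness of $h$ and \'etaleness of $h|_D$ by linear algebra at $x$ followed by shrinking $X$. One small slip in wording: the criterion for smoothness of $h$ at $x$ is \emph{injectivity} (not surjectivity) of the cotangent map $\bT^*_{h(x)}\bA^{d-1}_k\to\bT^*_xX$, which is precisely what the linear independence of the $d-1$ differentials establishes.
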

\begin{proof}
Suppose that the image of $x$ in $\bA^1_k$ is the origin $o\in\bA^1_k$. Consider the canonical maps
\begin{equation}\label{mapcotan}
\bT^*_o\bA^1_k \longrightarrow \bT^*_xX \longrightarrow \bT^*_xD
\end{equation}
We denote by $t_1,\ldots,t_d$ a basis of $\bT^*_xX$, by $s_1,\ldots, s_{d-1}$ a basis of $\bT^*_xD$ such that $t_i$ maps to $s_i$ for $1\leq i\leq d-1$ and such that $t_d$ maps to $0$. Let $l$ be a basis of the line $\bT^*_o\bA^1_k$. Let  $\sum_{1\leq i\leq d}\lambda_it_i$ ($\lambda_i\in k$) be the image of $l$ by $\bT^*_o\bA^1_k\longrightarrow \bT^*_xX$. Since $g|_D:D\longrightarrow \bA^1_k$ is smooth, the composition \eqref{mapcotan} is injective. Hence, $(\lambda_1,\ldots\lambda_{d-1})\neq 0$. We may assume that $\lambda_{1}\neq 0$. Let $\wt t_2,\ldots\wt t_{d-1}$ be regular functions on $X$ defined around $x$ and lifting $t_2,\ldots,t_{d-1}$ respectively. We have the following morphism of $k$-algebras
\begin{equation}\label{morphi}
k[y,x_2,\ldots,x_{d-1}]\longrightarrow\sO_{X,x}\ \ \   y\mapsto g^*(y), \ \ x_i\mapsto \wt t_i.
\end{equation}
In a neighbourhood of $x$, the map \eqref{morphi} induces a map
\begin{equation*}
h:X\longrightarrow \bA^{d-1}_k.
\end{equation*}
 It induces an injection $d h_x:\bT^*_o\bA^{d-1}\longrightarrow \bT^*_xX$ and an isomorphism $(d h|_D)_x:\bT^*_o\bA^{d-1}_k\longrightarrow\bT^*_xD$. Hence, $h:X\longrightarrow \bA^{d-1}_k$ is smooth at $x$ and $h|_D:\bA^{d-1}_k\longrightarrow D$ is \'etale at $x$. By construction, the composition of $h$ and $\pr_1$ is $g$.
\end{proof}

\begin{proposition}\label{constant_swan_implies_easy_SS}
Let $k$ be a perfect field of characteristic $p>0$.
Let $X$ be a smooth connected $k$-scheme.
Let $D$ be a smooth connected divisor in $X$.
Put $U:=X\setminus D$.
Let $j : U \longrightarrow X$ be the canonical injection.
Let $\Lambda$ be a finite field of characteristic $\ell\neq p$.
Let $\sF$ be a non zero locally constant and constructible sheaf of $\Lambda$-modules on $U$.
Suppose that for every smooth curve $Z\subset X$ meeting $D$ transversally at a point $x$, the Swan conductor $\sw_x(\sF|_Z)$ is constant depending neither on $x$ nor on $Z$.
Then,
$$
SS j_!\sF=\bT^*_{X}X\bigcup \bT^*_DX
$$
\end{proposition}
\begin{proof}
We may assume that $k$ is algebraically closed.
We first show
\begin{equation}\label{inclusion_SS}
SS j_!\sF\subset\bT^*_{X}X\bigcup \bT^*_DX.
\end{equation}
Let $x$ be a closed point of $D$.
Let $g:X\longrightarrow \bA^1_k$ be a $k$-morphism which is $(\bT^*_{X}X\bigcup \bT^*_DX)$-transversal at $x$.
To prove \eqref{inclusion_SS}, we need to show by Beilinson's Theorem \ref{beilinsonmain} that $g:X\longrightarrow \bA^1_k$ is universally locally acyclic with respect to $j_!\sF$ in an open neighborhood of $x$.
By assumption, $g:X\longrightarrow \bA^1_k$ and $g|_D:D\longrightarrow \bA^1_k$ are smooth at $x$.
At the cost of replacing ${X}$ by a Zariski neighborhood of $x$, lemma \ref{makerelcurve} ensures that  $g=\pr_1 \circ h$ where $\pr_1:\bA^{d-1}_k\longrightarrow \bA^1_k$  is the first projection and $h:X\longrightarrow\bA^{d-1}_k$ is a smooth morphism such that $h(x)=0$ and $h|_D:D\longrightarrow \bA^{d-1}_k$ is \'etale.
For any closed point $z\in \bA^{d-1}_k$ in a sufficiently small neighbourhood of $0$, $T_z=h^{-1}(z)$ is a smooth $k$-curve transverse to $D$.
By assumption, for every closed point $w\in T_z\bigcap D$, the Swan conductor $\sw_w(\sF|_{T_z})$ does not depend on $z$ nor $w$.
Applying Deligne and Laumon's semi-continuity theorem for Swan conductors \cite[Th. 2.1.1]{lau} to the sheaf $j_!\sF$ on the relative curve $h:X\longrightarrow\bA^{d-1}_k$, we obtain that $h:X\longrightarrow\bA^{d-1}_k$ is universally locally acyclic with respect to $j_!\sF$.
Since $\pr_1:\bA^{d-1}_k\longrightarrow \bA^1_k$ is smooth, lemma 7.7.6 from \cite{fu} ensures that $g=\pr_1 \circ h$ is universally locally acyclic with respect to $j_!\sF$.
Since this holds in the neighbourhood of every closed point of $D$, we obtain that the sheaf $j_!\sF$ is weakly micro-supported on $\bT^*_{X}X\bigcup \bT^*_DX$.
From Beilinson's Theorem \ref{beilinsonmain}, we deduce that the inclusion \eqref{inclusion_SS} holds.
On the other hand, since $\sF\neq 0$, the sheaf $j_!\sF$ is not constant.
Hence, the singular support $SS j_!\sF$ must contain irreducible components of dimension $\dim_k X$ supported on $D$.
Proposition \ref{constant_swan_implies_easy_SS} thus follows.
\end{proof}

\begin{theorem}\label{sameCC}
Let $k$ be a perfect field of characteristic $p>0$. Let $X$ be a smooth connected $k$-scheme. Put $d=\dim_k X$. Let $D$ be a smooth connected divisor in $X$. Put $U:=X\setminus D$. Let $j : U \longrightarrow X$ be the canonical injection.
Let $S$ be a connected smooth $k$-curve. Let $s$ be a closed point in $S$. Put  $V:=S\setminus \{s\}$. Let  $f:X\longrightarrow S$ be a smooth morphism such that $D=f^{-1}(s)$. Let $\Lambda$ be a finite field of characteristic $\ell\neq p$. Let $\sF$ be a non-zero locally constant and constructible sheaf of $\Lambda$-modules on $U$. Let $\sN$  be a locally constant and constructible sheaf of $\Lambda$-modules on $V$ of rank $N>0$. Suppose that $\sN$ has pure logarithmic slope $r_{\log}$ at $s$ with $r_{\log}>r(D,\sF)-1$. Then, we have
\begin{align*}
SS(j_!(\sF\otimes_{\Lambda} f^*\sN))&= SS(j_! f^*\sN)=\bT^*_{X}X\bigcup \bT^*_DX\\
CC(j_!(\sF\otimes_{\Lambda} f^*\sN))&=\rk_{\Lambda}\sF\cdot CC(j_!f^*\sN)\\
&=(-1)^d\big(N\cdot\rk_{\Lambda} \sF\cdot[\bT^*_XX]+(r_{\log}+1)N\cdot\rk_{\Lambda} \sF\cdot[\bT^*_DX]\big)
\end{align*}
\end{theorem}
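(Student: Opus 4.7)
The strategy is to first compute $SS(j_!f^*\sN)$ and $CC(j_!f^*\sN)$ directly via Theorem \ref{cc7.6}, and then argue that tensoring by $\sF$ scales the characteristic cycle by $\rk_\Lambda\sF$ without enlarging the singular support, since the slope $r_{\log}+1$ of $f^*\sN$ along $D$ strictly dominates the slopes of $\sF$. Concretely, let $\bar j:V\to S$ be the canonical injection; since the residue field of $S$ at $s$ is perfect and $\sN$ has pure logarithmic slope $r_{\log}$, we have $\dt_s(\sN)=N(r_{\log}+1)$, so Example \ref{CCcurve} gives
\begin{equation*}
SS(\bar j_!\sN)=\bT^*_S S\cup \bT^*_s S,\qquad CC(\bar j_!\sN)=-N[\bT^*_S S]-N(r_{\log}+1)[\bT^*_s S].
\end{equation*}
The smooth morphism $f$ is properly $SS(\bar j_!\sN)$-transversal: both $X\times_S\bT^*_S S$ and $X\times_S\bT^*_s S$ have dimension $d$, and $df$ sends the latter isomorphically onto the conormal bundle $\bT^*_D X$. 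Theorem \ref{cc7.6} then yields
\begin{equation*}
CC(j_!f^*\sN)=f^!\,CC(\bar j_!\sN)=(-1)^d\bigl(N[\bT^*_X X]+N(r_{\log}+1)[\bT^*_D X]\bigr),
\end{equation*}
and $SS(j_!f^*\sN)=\bT^*_X X\cup\bT^*_D X$.

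Next, I will analyze the ramification of $\sF\otimes f^*\sN$ at the generic point $\xi$ of $D$. By proposition $2.22.2$ of \cite{wr} applied to the smooth morphism $f$, the restriction $(f^*\sN)|_\xi$ is isoclinic of slope $r_{\log}+1$. All slopes of $\sF|_\xi$ are at most $r(D,\sF)<r_{\log}+1$, so $G^{r_{\log}+1}_K$ acts trivially on $\sF|_\xi$, and the argument of lemma \ref{tensordtsw}(i) shows that $(\sF\otimes f^*\sN)|_\xi$ is isoclinic of slope $r_{\log}+1$ and is isomorphic, as a $G^{r_{\log}+1}_K$-module, to $((f^*\sN)|_\xi)^{\oplus\rk_\Lambda\sF}$. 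In particular, the central characters of $(\sF\otimes f^*\sN)|_\xi$ are exactly those of $(f^*\sN)|_\xi$, each with multiplicity multiplied by $\rk_\Lambda\sF$. Each such character is pulled back along $f$ from a character of $\sN$ at $s$, so its characteristic form lies in the image of $f^*\Omega^1_{\sO_{S,s}}\otimes\bar F_\xi$ in $\Omega^1_{\sO_{X,\xi}}\otimes\bar F_\xi$, which is precisely the conormal direction $(\bT^*_D X)_\xi$.

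To conclude, I observe that the ramification of $f^*\sN$ is non-degenerate along $D$, because étale-locally at $s$ the sheaf $\sN$ decomposes into characters whose pullbacks to $X$ have characteristic forms in $\bT^*_D X$ well-defined throughout $D$; by the previous step, the central characters of $\sF\otimes f^*\sN$ at $\xi$ and their characteristic forms coincide with those of $f^*\sN$, with multiplicities multiplied by $\rk_\Lambda\sF$. The semi-continuity results (Propositions \ref{slope'} and \ref{slopehd}) force the slope (and logarithmic slope) of $\sF\otimes f^*\sN$ to remain constant along $D$, precluding the emergence of extra components such as $\bT^*_x X$ in $SS(j_!(\sF\otimes f^*\sN))$ at closed points $x\in D$ and extending non-degeneracy globally. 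Applying formula \eqref{CCnondeg} to both $j_!f^*\sN$ and $j_!(\sF\otimes f^*\sN)$ then gives $CC(j_!(\sF\otimes f^*\sN))=\rk_\Lambda\sF\cdot CC(j_!f^*\sN)$, and the equality of singular supports follows. The main obstacle is this last step — propagating non-degeneracy from $\xi$ to all of $D$ for the tensor product and ruling out isolated characteristic fibers above closed points of $D$ — where the semi-continuity of largest slopes from Section 3 is the technical linchpin.
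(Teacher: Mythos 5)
Your first two steps are correct and consistent with the paper's strategy: the computation of $SS(j_!f^*\sN)$ and $CC(j_!f^*\sN)$ via pullback along the smooth morphism $f$, and the identification of the slope and central characters of $(\sF\otimes f^*\sN)|_\xi$ with those of $(f^*\sN)|_\xi$ (rescaled by $\rk_\Lambda\sF$), all hold. The problem is the concluding step, which is genuinely the crux and where your argument has a gap.

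You claim that Propositions \ref{slope'} and \ref{slopehd} ``force the slope to remain constant along $D$, precluding the emergence of extra components such as $\bT^*_xX$ in $SS(j_!(\sF\otimes f^*\sN))$... and extending non-degeneracy globally.'' But these propositions only give inequalities (or an equality in \ref{slope'}(1) \emph{under} an $SS$-transversality and non-degeneracy hypothesis, which is circular here since non-degeneracy of $\sF\otimes f^*\sN$ is precisely what you are trying to establish). The unconditional statement \ref{slope'}(2) does, combined with lemma \ref{tensordtsw}, make the total dimension of $(\sF\otimes f^*\sN)|_C$ equal to $N\rk_\Lambda\sF\cdot(r_{\log}+1)$ for every transverse curve $C$ at every closed point of $D$, which is the right numerical input. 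What is missing is the \emph{mechanism} by which this constancy of a numerical invariant along curves implies a geometric bound on the singular support. The paper supplies exactly this mechanism: it factors an arbitrary transversal test map $g:X\to\bA^1_k$ through a relative curve fibration $h:X\to\bA^{d-1}_k$ with $h|_D$ \'etale (Lemma \ref{makerelcurve}), verifies that the fiberwise total dimension of $j_!(\sF\otimes f^*\sN)$ is constant, and then invokes Deligne--Laumon's semi-continuity theorem \cite[Th. 2.1.1]{lau} to conclude that $h$, and hence $g$, is universally locally acyclic. This, via Beilinson's weak-micro-support characterization of $SS$, yields $SS(j_!(\sF\otimes f^*\sN))\subseteq \bT^*_XX\cup\bT^*_DX$, after which the characteristic cycle coefficient is extracted by cutting with a transverse curve and using Theorem \ref{cc7.6} (not by the non-degeneracy formula \eqref{CCnondeg} on $X$ itself).

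Also, your attempt to deduce $CC$ via \eqref{CCnondeg} requires first establishing non-degeneracy of $\sF\otimes f^*\sN$ along $D$, which is an \'etale-local condition at all closed points of $D$, not just at the generic point $\xi$; the assertion that matching central characters at $\xi$ with those of $f^*\sN$ ``extends non-degeneracy globally'' is not justified, and the fact that $\sF$ itself may be degenerate at closed points of $D$ makes this step nontrivial. Even if you did establish non-degeneracy, the paper's route (weak micro-support via Deligne--Laumon, then curve restriction for the coefficient) is cleaner because it avoids discussing the definition of non-degeneracy directly. You should restructure the last step around Lemma \ref{makerelcurve} and Deligne--Laumon rather than trying to propagate non-degeneracy.
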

\begin{proof}
We may assume that $k$ is algebraically closed and that $\dim_k X\geq 2$.
The equality
$$
SS(j_! f^*\sN)=\bT^*_{X}X\bigcup \bT^*_DX
$$
follows from \cite[Th. 1.4]{bei}.
Let $Z\subset X$ be a smooth curve meeting $D$ transversally at a point $x$.
Then, $f|_{Z}:Z\longrightarrow S$ is \'etale in a neighborhood of $x$.
Thus, $(f^*\sN)|Z$ has pure logarithmic slope $r_{\log}$ at $x$.
From lemma \ref{tensordtsw} and proposition \ref{slope'}, we deduce
\begin{equation}\label{dimtotw}
\dt_x(j_!(\sF\otimes_{\Lambda} f^*\sN)|_{Z})=(r_{\log}+1)N\cdot\rk_{\Lambda} \sF.
\end{equation}
In particular, the left-hand side of \eqref{dimtotw} does not depend on $x$ nor on $Z$.
Then, the first part of Theorem \ref{sameCC} follows from proposition \ref{constant_swan_implies_easy_SS}.\\ \indent
We now compute the characteristic cycles $CC(j_!(\sF\otimes_{\Lambda} f^*\sN))$ and $CC(j_!f^*\sN)$. Applying Theorem \ref{cc7.6} to the smooth
morphism $f:X\longrightarrow S$, we have
\begin{equation}
CC(j_!f^*\sN)=(-1)^d\big(N\cdot[\bT^*_XX]+(r_{\log}+1)N\cdot[\bT^*_DX]\big).
\end{equation}
From \eqref{CCnondeg}, the coefficient of $\bT^*_{X}{X}$ in $CC(j_!(\sF\otimes_{\Lambda} f^*\sN))$ is $(-1)^d N\cdot\rk_{\Lambda} \sF$. We are thus left to compute the coefficient of $\bT^*_{D}{X}$ in $CC(j_!(\sF\otimes_{\Lambda} f^*\sN))$. Let $i : C\longrightarrow X$ be a smooth curve in $X$ transverse to $D$.
Since
$$
SS(j_!(\sF\otimes_{\Lambda} f^*\sN))=\bT^*_{X}X\bigcup \bT^*_DX,
$$
the curve  $i : C\longrightarrow X$ is properly $SS(j_!(\sF\otimes_{\Lambda} f^*\sN))$-transversal.
By Theorem \ref{cc7.6} again, we have
\begin{equation}\label{CCcutcurve}
i^!CC(j_!(\sF\otimes_{\Lambda} f^*\sN))=CC(i^*j_!(\sF\otimes_{\Lambda} f^*\sN)).
\end{equation}
Hence, example \ref{CCcurve} and proposition \ref{slope'} imply that the sought-after coefficient is
\begin{equation*}
(-1)^d(r_{\log}+1)N\cdot\rk_{\Lambda} \sF,
\end{equation*}
which finishes the proof of Theorem \ref{sameCC}.
\end{proof}

\begin{lemma}\label{!=*criterion}
Let $k$ be a perfect field of characteristic $p>0$.
Let $X$ be a smooth $k$-scheme.
Let $D$ be a smooth divisor in $X$.
Put $U:=X\setminus D$.
Let $j : U \longrightarrow X$ be the canonical injection.
Let $\sF$ be a locally constant and constructible sheaf of $\Lambda$-modules on $U$ such that
$SS j_! \sF \subset \bT^*_{X}X\bigcup \bT^*_DX$.
Suppose that for every smooth curve $C\subset X$ meeting $D$ transversally at a point $x$, we have $(Rj_{C\ast}\sF|_{C\cap U})_x=0$ where $j_C : C\cap U \longrightarrow C$ denotes the open immersion.
Then, $j_! \sF \longrightarrow Rj_*\sF$  is an isomorphism.
\end{lemma}
\begin{proof}
Let $x$ be a closed point of $D$.
We have to show that the germ of $Rj_*\sF$ at $x$ vanishes.
Let $i:C\longrightarrow X$ be a locally closed immersion from a smooth $k$-curve $C$ to $X$ such that $C$ meets $D$ transversally at $x$.
Consider the following cartesian diagram
\begin{equation*}
\xymatrix{\relax
C\cap U\ar[r]^-{i_C}\ar[d]_{j_C}\ar@{}|{\Box}[rd]&U\ar[d]^j\\
C\ar[r]_i&{X}}
\end{equation*}
Since $SS j_! \sF \subset \bT^*_{X}X\bigcup \bT^*_DX$, the morphism $i:C\longrightarrow X$ is $SS(j_! \sF)$-transversal.
Since $j_! \sF$ is perverse up to a shift, proposition \ref{51314} implies that $SS(j_! \sF)=SS(Rj_*\sF)$.
Hence, the morphism $i:C\longrightarrow X$ is $SS(Rj_*\sF)$-transversal.
From corollary \ref{basechange}, we deduce
\begin{equation}\label{vanish1}
i^*Rj_*\sF\cong Rj_{C*}\sF|_{C\cap U}
\end{equation}
Lemma \ref{!=*criterion} thus follows by assumption on $\sF$.
\end{proof}

\begin{corollary}\label{!=R*}
Under the conditions of Theorem \ref{sameCC}, we have
\begin{equation*}
Rj_*(\sF\otimes_{\Lambda} f^*\sN)=j_!(\sF\otimes_{\Lambda} f^*\sN).
\end{equation*}
\end{corollary}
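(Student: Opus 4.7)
My plan is to reduce the statement to showing $i^* Rj_* \mathcal{G} = 0$ where $\mathcal{G} := \sF \otimes_{\Lambda} f^*\sN$ and $i : D \to X$ is the closed immersion; via the excision triangle $j_! \mathcal{G} \to Rj_* \mathcal{G} \to i_* i^* Rj_* \mathcal{G} \to$, this is equivalent to the corollary. Since $Rj_* \mathcal{G}$ is constructible on the Noetherian scheme $X$, it suffices to verify that $(Rj_*\mathcal{G})_{\bar x} = 0$ for every closed geometric point $\bar x$ of $D$. The statement being étale-local, I will assume $k$ algebraically closed throughout.

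Fix a closed point $x \in D$. Choosing a regular system of parameters $(t_1, \ldots, t_d)$ of $\sO_{X,x}$ with $t_d$ a local equation of $D$ at $x$, I set $C := V(t_1, \ldots, t_{d-1})$ in a Zariski neighborhood of $x$. Then $C$ is a smooth connected curve through $x$, transverse to $D$. Theorem \ref{sameCC} identifies $SS(j_!\mathcal{G}) = \bT^*_X X \cup \bT^*_D X$, and transversality of $C$ with $D$ immediately yields that the closed immersion $i_C : C \to X$ is $SS(j_!\mathcal{G})$-transversal. Applying Corollary \ref{basechange} to $i_C$ produces a canonical isomorphism
\begin{equation*}
i_C^* Rj_* \mathcal{G} \xrightarrow{\sim} Rj'_*(\mathcal{G}|_{C_0}),
\end{equation*}
where $C_0 := C \setminus \{x\}$ and $j' : C_0 \to C$ is the canonical open immersion. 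Showing that the right hand side has zero stalk at $x$ will therefore imply $(Rj_* \mathcal{G})_x = 0$.

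To finish, I would check that $\mathcal{G}|_{C_0}$ is logarithmically isoclinic at $x$ of log slope $r_{\log} > 0$. Since $C$ is transverse to $D = f^{-1}(s)$, the restriction $f|_C$ is étale at $x$, so $f^*\sN|_{C_0}$ is logarithmically isoclinic at $x$ of log slope $r_{\log}$. By proposition \ref{slope'}$(2)$, the (non-logarithmic) slopes of $\sF|_{C_0}$ at $x$ are bounded by $r(D,\sF)$; since the residue field of $\sO_{C,x}$ is algebraically closed, the relation $r_{K,\log}(\cdot) + 1 = r_K(\cdot)$ from \ref{invM} upgrades this into the bound $r_{K,\log}(\sF|_{C_0}) \leq r(D,\sF) - 1 < r_{\log}$ on the log slopes of $\sF|_{C_0}$. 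Lemma \ref{tensordtsw}$(ii)$, applied to each logarithmic slope component of $\sF|_{C_0}$ separately, then forces $\mathcal{G}|_{C_0}$ to be logarithmically isoclinic of log slope $r_{\log}$. Because $\sF \neq 0$ implies $r(D,\sF) \geq 1$, we get $r_{\log} > 0$, so the wild inertia $P_x$ acts without nonzero fixed vectors on $\mathcal{G}|_{C_0}$; combined with $P_x$ being pro-$p$ and the tame quotient $I_x/P_x$ having $\ell$-cohomological dimension one, this gives $R\Gamma(I_x, \mathcal{G}|_{C_0}) = 0$, which is precisely $(Rj'_*(\mathcal{G}|_{C_0}))_x = 0$.

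The main delicate point in this plan is the slope comparison on the transversal curve: proposition \ref{slope'}$(2)$ controls non-logarithmic slopes rather than logarithmic ones, so the strict inequality $r_{\log} > r(D,\sF) - 1$ in the hypothesis of Theorem \ref{sameCC} is exactly what is needed to separate the log slopes of the two tensor factors on $C_0$ and apply lemma \ref{tensordtsw}$(ii)$; the reduction to $k$ algebraically closed is what makes the identity $r_{K,\log} + 1 = r_K$ available at the closed point $x$.
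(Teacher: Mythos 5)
Your proposal is correct and follows essentially the same route as the paper's proof: reduce to computing the stalk $(Rj_*\mathcal{G})_{\bar x}$, slice along a transverse smooth curve $C$ through $x$ (which is $SS(j_!\mathcal{G})$-transversal by Theorem~\ref{sameCC}), invoke Corollary~\ref{basechange} to commute $Rj_*$ with restriction to $C$, and then conclude from the pure-slope structure of $\mathcal{G}|_{C_0}$. The only differences are cosmetic: you phrase the slope comparison on $C_0$ in logarithmic terms and argue via Lemma~\ref{tensordtsw}(ii) applied componentwise, whereas the paper uses non-logarithmic slopes and the statement that $(\sF\otimes f^*\sN)|_V$ has pure slope $r_{\log}+1$ — equivalent since the residue field at $x$ is perfect so $r_K = r_{K,\log}+1$. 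You also replace the paper's citation of [Fu, Prop. 8.1.4] by a direct Galois-cohomology calculation; that calculation is fine, though the appeal to the $\ell$-cohomological dimension of the tame quotient $I_x/P_x$ is superfluous — once $M^{P_x}=0$ and $P_x$ is pro-$p$ with $\ell\neq p$, $R\Gamma(P_x,M)$ already vanishes, hence so does $R\Gamma(I_x,M)$.
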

\begin{proof}
We may assume that $k$ is algebraically closed.
Let $i:C\longrightarrow X$ be a locally closed immersion from a smooth $k$-curve $C$ to $X$ such that $C$ meets $D$ transversally at $x$.
Let $j_C : C\cap U \longrightarrow C$ be the open immersion.
Notice that $(f^*\sN)|_{C\cap U}$ has pure slope $r_{\log}+1$ at $x$ since $f|_C:C\longrightarrow S$ is \'etale in a neighborhood of $x$.
From proposition \ref{slope'}, the largest slope of $\sF|_{C\cap U}$ at $x$ is no greater than $r(D,\sF)$.
Hence $(\sF\otimes_{\Lambda} f^*\sN)|_{C\cap U}$ is totally wild ramified at $x$ of pure slope $r_{\log}+1$.
By \cite[Prop. 8.1.4]{fu}, we deduce
\begin{equation}\label{vanish2}
Rj_{C*}((\sF\otimes_{\Lambda} f^*\sN)|_{C\cap U})= j_{C!}((\sF\otimes_{\Lambda} f^*\sN)|_{C\cap U}).
\end{equation}
Hence, corollary \ref{!=R*} follows from the combination of Theorem \ref{sameCC} with lemma \ref{!=*criterion}.
\end{proof}

\section{Bound for the wild ramification of nearby cycles}

\subsection{}\label{defnc}
In this section, let $R$ be a henselian discrete valuation ring. Let $K$ be the fraction field of $R$ and let $F$ be the residue field of $R$. Suppose that $F$ is perfect of characteristic $p>0$. Put $S=\spec(R)$. Let $s$ be the closed point of $S$. Let $\bar s\longrightarrow S$ be a geometric point above $s$. Let $\eta$ be the generic point of $S$, $\eta^{\rt}$ a maximal tame cover of $\eta$, $\bar\eta$ a geometric point above $\eta^{\rt}$. Let $G$ be the Galois group of $\bar\eta$ over $\eta$. Let $P$ be the wild inertia subgroup of $G$.

Let $f:X\longrightarrow S$ be a morphism of schemes, $f_{\eta}:X_{\eta}\longrightarrow \eta$ its generic fiber and $f_s:X_s\longrightarrow s$ its closed fiber and $f_{\bar s}:X_{\bar s}\longrightarrow \bar s$ its geometric closed fiber.  Let $\Lambda$ be a finite field of characteristic $\ell\neq p$. For an object $\sF$ in $D^{+}(X_{\eta},\Lambda)$, we denote by $R\Psi(\sF,f)$ (resp. $R\Psi^{\rt}(\sF,f)$) the nearby cycles complex (resp. the tame nearby cycles complex) of $\sF$  with respect to $f:X\longrightarrow S$. For an object $\sF$ in $D^{+}(X,\Lambda)$, we denote by $R\Phi(\sF,f)$ the vanishing cycles complex of $\sF$  with respect to $f:X\longrightarrow S$. All these complex are objects of $D^+(X_{\bar s}, \Lambda)$ with $G$-actions.

Note that $R\Psi$, $R\Psi^{\rt}$ and $R\Phi$ also make sense when $S$ is a smooth curve over a perfect field with a closed point $s$. For nearby cycles and vanishing cycles on a more general base scheme, we refer to \cite{Orgogozo}.


The following definition was introduced in  \cite{tey}.

\begin{definition}
Let $\sF$ be an object in $D^b_c(X_{\eta},\Lambda)$. Let $\bar x\longrightarrow X_{\bar s}$ be a geometric point above a closed point $x\in X_{\bar s}$. We say that $r\in \mathbb{Q}_{\geq 0}$ is a {\it nearby slope for $\sF$ with respect to $f:X\longrightarrow S$ at $x$} if there is a locally constant and constructible sheaf $\sN$ of $\Lambda$-modules on $\eta$ with pure logarithmic slope $r$ such that
\begin{equation*}
R\Psi^{\rt}_{\bar x}(\sF\otimes_{\Lambda} f_{\eta}^*\sN,f)\neq 0.
\end{equation*}
We denote by $\Sl(\sF,f,x)$ the set of nearby slopes of $\sF$ with respect to $f:X\longrightarrow S$ at $x$.
\end{definition}

\begin{lemma}\label{tnc}
Suppose that $f:X\longrightarrow S$ is of finite type. Let  $\sF$ be an object in $D^b_c(X_{\eta},\Lambda)$. Let $\sN$ be an object in $D^b_c(\eta,\Lambda)$. Then, for any geometric point $\bar x\longrightarrow X_{\bar s}$ above a closed point $x$ in $X_{\bar s}$, there is a canonical $G/P$-equivariant isomorphism
\begin{equation}\label{tnc=ncp}
R\Psi^{\rt}_{\bar x}(\sF\otimes^L_{\Lambda}f_{\eta}^*\sN,f)\cong (R\Psi_{\bar x}(\sF,f)\otimes^L_{\Lambda}\sN|_{\bar\eta})^P.
\end{equation}
\end{lemma}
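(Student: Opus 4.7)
The plan is to combine two ingredients: the identification of tame nearby cycles with $P$-invariants of full nearby cycles, and a projection-type formula that lets one pull the pullback $f_\eta^*\sN$ out of the nearby cycles functor.

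For the first ingredient, one recalls the standard description of the stalk of tame nearby cycles at $\bar x$: if $X_{(\bar x)}$ denotes the strict localization of $X$ at $\bar x$, then
\begin{equation*}
R\Psi^{\rt}_{\bar x}(\sG,f)=R\Gamma(X_{(\bar x)}\times_S\eta^{\rt},\sG),\qquad R\Psi_{\bar x}(\sG,f)=R\Gamma(X_{(\bar x)}\times_S\bar\eta,\sG).
\end{equation*}
The natural projection $X_{(\bar x)}\times_S\bar\eta\longrightarrow X_{(\bar x)}\times_S\eta^{\rt}$ is a pro-Galois cover of group $P=\gal(\bar\eta/\eta^{\rt})$. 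Since $P$ is a pro-$p$ group and $\Lambda$ has characteristic $\ell\neq p$, the Hochschild--Serre spectral sequence degenerates and the functor of strict $P$-invariants is exact. This yields a canonical $G/P$-equivariant isomorphism
\begin{equation*}
R\Psi^{\rt}_{\bar x}(\sG,f)\iso R\Psi_{\bar x}(\sG,f)^P
\end{equation*}
for every $\sG\in D^b_c(X_\eta,\Lambda)$.

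For the second ingredient, I apply this to $\sG=\sF\otimes^L_{\Lambda}f_\eta^*\sN$ and then I want to transport the factor $f_\eta^*\sN$ outside of $R\Psi_{\bar x}$. The key observation is that $\eta$ is a point, so $\sN\in D^b_c(\eta,\Lambda)$ has $\sN|_{\bar\eta}$ a perfect complex of $\Lambda$-modules, and the restriction of $f_\eta^*\sN$ to $X_{(\bar x)}\times_S\bar\eta$ is canonically the constant complex with value $\sN|_{\bar\eta}$. Hence, by the projection formula for the structural morphism $X_{(\bar x)}\times_S\bar\eta\to\spec\Lambda$ applied to the perfect factor $\sN|_{\bar\eta}$, one gets a canonical $G$-equivariant isomorphism
\begin{equation*}
R\Psi_{\bar x}(\sF\otimes^L_{\Lambda}f_\eta^*\sN,f)\cong R\Psi_{\bar x}(\sF,f)\otimes^L_{\Lambda}\sN|_{\bar\eta},
\end{equation*}
where the $G$-action on the right is diagonal (the monodromy action on $R\Psi_{\bar x}(\sF,f)$ and the Galois action of $G$ on $\sN|_{\bar\eta}$ through $\bar\eta\to\eta$).

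Combining the two ingredients, applying $(-)^P$ to the second displayed isomorphism and using the first gives the required $G/P$-equivariant isomorphism \eqref{tnc=ncp}. The only point requiring some care is the projection formula of the second step: one must justify that the factor $f_\eta^*\sN$, although a complex of sheaves on $X_\eta$, becomes, after base change to $\bar\eta$, the constant complex associated to the perfect object $\sN|_{\bar\eta}$, so that derived tensor product commutes both with $Rj_*$ and with the pullback $i^*$ defining $R\Psi$. This is the main thing one has to check, but it reduces to a standard verification with perfect complexes.
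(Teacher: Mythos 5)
Your proposal is correct and follows essentially the same route as the paper's proof: both rely on the identification of the tame nearby cycles stalk with the $P$-invariants of the full nearby cycles stalk (valid because $P$ is pro-$p$ and $\ell\neq p$), together with a projection formula to extract the factor coming from $\sN$. The only organizational difference is that the paper applies the projection formula for $Rf_{\eta^{\rt}*}$ over $\eta^{\rt}$ (where $\sN|_{\eta^{\rt}}$ is still a genuine sheaf with nontrivial $G/P$-action, and the needed hypotheses are finite cohomological dimension plus local constancy of the cohomology sheaves of $\sN$) and takes $P$-invariants afterwards, whereas you first descend to $\bar\eta$, where $\sN|_{\bar\eta}$ becomes a constant perfect complex, and then pull it out of $R\Gamma$; in your version the residual care, as you correctly flag, is in checking that the resulting isomorphism is $G$-equivariant for the diagonal action before taking $P$-invariants, which is exactly the point the paper sidesteps by staying over $\eta^{\rt}$.
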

\begin{proof}
Consider the following cartesian diagram
\begin{equation*}
\xymatrix{\relax
X_{(\bar x)}\times_S\bar\eta\ar[r]\ar@{}|(0.5){\Box}[rd]\ar[d]_{f_{\bar \eta}}&X_{(\bar x)}\times_S \eta^{\rt}\ar[d]^{f_{\eta^{\rt}}}\ar[r]\ar@{}|(0.5){\Box}[rd]&X_{(\bar x)}\times_S \eta\ar[d]^{f_{\eta}}\ar[r]\ar@{}|(0.5){\Box}[rd]&X_{(\bar x)}\ar[d]^{f}\ar@{}|(0.5){\Box}[rd]&X_{(\bar x)}\times_S \bar s\ar[l]\ar[d]^{f_{ \bar s}}\\
\bar\eta\ar[r]&\eta^{\rt}\ar[r]&\eta\ar[r]&S&{\bar s}\ar[l]
}
\end{equation*}
Then we have
\begin{align}
R\Psi^{\rt}_{x}(\sF\otimes^L_{\Lambda}f_{\eta}^*\sN)&\cong R\Gamma(X_{(\bar x)}\times_S\eta^{\rt}, (\sF\otimes^L_{\Lambda}f_{\eta}^*\sN)|_{X_{(\bar x)}\times\eta^{\rt}})\\
&=R\Gamma(\eta^{\rt}, Rf_{\eta^{\rt} *}(\sF|_{X_{(\bar  x)}\times_S\eta^{\rt}}\otimes^L_{\Lambda}f^*_{\eta^{\rt}}(\sN|_{\eta^{\rt}})))\nonumber\\
&\cong R\Gamma(\eta^{\rt}, Rf_{\eta^{\rt} *}(\sF|_{X_{(\bar x)}\times_S\eta^{\rt}})\otimes^L_{\Lambda}\sN|_{\eta^{\rt}})\nonumber\\
&=R\Gamma(P, Rf_{\bar\eta *}(\sF|_{X_{(\bar x)}\times_S\bar\eta})\otimes^L_{\Lambda}\sN|_{\bar\eta})\nonumber\\
&=(Rf_{\bar\eta *}(\sF|_{X_{(\bar x)}\times_S\bar\eta})\otimes^L_{\Lambda}\sN|_{\bar\eta})^P\nonumber\\
&=(R\Gamma(X_{(\bar x)}\times_S\bar\eta, \sF|_{X_{(\bar x)}\times_S\bar\eta})\otimes^L_{\Lambda}\sN|_{\bar\eta})^P\nonumber\\
&=(R\Psi_{\bar x}(\sF,f)\otimes^L_{\Lambda}\sN|_{\bar\eta})^P,\nonumber
\end{align}
where the third identification comes from the projection formula
\begin{equation*}
Rf_{\eta^{\rt} *}(\sF|_{X_{(x)}\times_S\eta^{\rt}}\otimes^L_{\Lambda}f^*_{\eta^{\rt}}(\sN|_{\eta^{\rt}}))\cong Rf_{\eta^{\rt} *}(\sF|_{X_{(x)}\times_S\eta^{\rt}})\otimes^L_{\Lambda}\sN|_{\eta^{\rt}},
\end{equation*}
which is valid since $Rf_{\eta^{\rt}*}$ has finite cohomological dimension and each cohomology sheaf of $\sN$ is locally constant.
\end{proof}

\begin{corollary}\label{tncls}
Suppose that $f:X\longrightarrow S$ is of finite type. Let $\sF$ be an object in $D^b_c(X_{\eta},\Lambda)$. Let $\bar x\longrightarrow X_s$ be a geometric point above a closed point $x$ in $X_{\bar s}$.
Then
\begin{equation}\label{equality_slopes}
\Sl(\sF,f,x)=\bigcup_{i\in \mathbb Z} \Sl_{\log, K}(R^i\Psi_{\bar x}(\sF,f))
\end{equation}
\end{corollary}
\begin{proof}
Put
$$
M=\bigoplus_{i\in \mathds{Z}} R^i\Psi_{\bar x}(\sF,f).
$$
If $M=0$, then lemma \ref{tnc} ensures that both sides of  \eqref{equality_slopes} are empty so that lemma \ref{tncls} holds in that case.
We can thus suppose $M\neq 0$.
Let $s\geq 0$.
Let $N$ be a finitely generated $\Lambda$-module with continuous $G$-action.
Assume that $N$ has pure logarithmic slope $s$.
We denote by $\sN$  the locally constant and constructible sheaf of $\Lambda$-modules on $\eta$ associated to $N$.
By lemma \ref{tnc}, we have
\begin{equation}\label{big_equality}
\bigoplus_{i\in \mathds{Z}}  R^i\Psi^{\rt}_{\bar x}(\sF\otimes^L_{\Lambda}f_{\eta}^*\sN,f)\cong (M\otimes_{\Lambda}N )^P.
\end{equation}
If $s$ is not a nearby slope of $\sF$ with respect to $f$ at $x$, then $(M\otimes_{\Lambda}N )^P=0$ for every $N$ as above.
Then, lemma \ref{added_referee_lemma} $(2)$ implies that $s$ is not a logarithmic slope for $M$.
Conversely, suppose that $s$ is not a logarithmic slope of $M$.
Then, lemma \ref{added_referee_lemma} $(2)$ implies that the left hand side of \eqref{big_equality} vanishes for every locally constant and constructible sheaf of $\Lambda$-modules $\sN$ on $\eta$ with pure logarithmic slope $s$.
Hence, $s$ is not a nearby slope of $\sF$ with respect to $f$ at $x$.
This finishes the proof of corollary \ref{tncls}.
\end{proof}

\begin{lemma}\label{modularepresentation}
Suppose that $f:X\longrightarrow S$ is of finite type. Let $\sF$ be a constructible sheaf of $\Lambda$-module on $X_{\eta}$. We assume that there is a sequence of subsheaves 
$$0=\sF_0\subset\sF_1\subset \cdots\subset \sF_n=\sF$$
such that $\sF_1\cong \sF_r/\sF_{r-1}$ for any $1\leq r\leq n$. Let $H$ be a closed subgroup of $P$. Let $\bar x\longrightarrow X_{\bar s}$ be a geometric point. The action of $H$ on each $R^i\Psi_{\bar x}(\sF,f)$ is trivial if and only if that of $H$ on each $R^i\Psi_{\bar x}(\sF_1,f)$ is also trivial ($i\in\mathbb Z$).
\end{lemma}

\begin{proof}
Since $H$ is a pro-$p$ group and $\ch(\Lambda)=\ell$  $(\ell\neq p)$, every $\Lambda$-modules with continuous $H$-actions are semi-simple. 

We first prove the if part. Suppose an integer $1\leq r\leq n-1$ satisfies that the action of $H$ on each $R^i_{\bar x}\Psi(\sF_r,f)$ is trivial. The short exact sequence 
$$0\longrightarrow \sF_r\longrightarrow\sF_{r+1}\longrightarrow\sF_1\longrightarrow 0$$
gives a long exact sequence 
$$\cdots \longrightarrow R^i\Psi_{\bar x}(\sF_r,f)\longrightarrow R^i\Psi_{\bar x}(\sF_{r+1},f)\longrightarrow R^i\Psi_{\bar x}(\sF_1,f)\longrightarrow R^{i+1}\Psi_{\bar x}(\sF_r,f)\longrightarrow\cdots$$
Since the actions of $H$ on each $R^i\Psi_{\bar x}(\sF_r,f)$ and each $R^i\Psi_{\bar x}(\sF_1,f)$ are trivial, that of $H$ on each $R^i\Psi_{\bar x}(\sF_{r+1},f)$ are also trivial, for each $i\in\mathbb Z$. By the induction on $r$, we finish the proof of the if part.

Now we treat the only if part. Let $A$ be the set of integers such that, for any $i\in A$ and any $1\leq r\leq n$, the action of $H$ on $R^i\Psi_{\bar x}(\sF_r,f)$ is trivial. It is easy to see that $\bZ_{\leq -1}\subseteq A$. We take $q\in A$, for any $1\leq r\leq n$, we have an exact sequence 
$$\cdots\longrightarrow R^q\Psi_{\bar x}(\sF_1,f)\longrightarrow R^{q+1}\Psi_{\bar x}(\sF_r,f)\longrightarrow R^{q+1}\Psi_{\bar x}(\sF_{r+1},f)\longrightarrow \cdots$$
Since actions of $H$ on $R^{q}\Psi_{\bar x}(\sF_1,f)$ and on $R^{q+1}\Psi_{\bar x}(\sF_n,f)$ is trivial, then, that of $H$ on each $R^{q+1}\Psi_{\bar x}(\sF_r,f)$ $(1\leq r\leq n)$ is trivial. Hence $q+1\in A$. By the induction on $q$, we see that $A=\mathbb Z$. We finish the proof of the only if part. 
\end{proof}

\subsection{}\label{defsemistable}
Let $Z$ be a reduced subscheme of $X$ containing $X_s$. We say that $(X,Z)$ is a {\it semi-stable pair over} $S$ if $f:X\longrightarrow S$ is of finite type and if, \'etale locally, $X$ is \'etale over $\spec(R[t_1,\ldots, t_d]/(t_{r+1}\cdots t_d-\pi))$, where  $r<d$ and $Z=X_s\cup Z_f$ with $Z_f$ defined by $t_1\cdots t_m=0$,  where $m\leq r$.

\vspace{0.2cm}
 We now prove the main theorem of this article.
 
\begin{theorem}\label{genleal}
Let $k$ be a perfect field of characteristic $p>0$. Suppose that  $S$ is the henselization at a closed point of a smooth curve over $k$. Let $(X,Z)$ be a semi-stable pair over $S$ such that $f : X\longrightarrow S$ is smooth. Let $\sF$ be a locally constant and
constructible sheaf of $\Lambda$-modules on $U:=X\setminus Z$. Let $j : U\longrightarrow X$ be the canonical injection. Suppose that $\sF$ is tamely ramified  along the horizontal part of $Z$. Let $r_{\log}(\sF)$ be the maximum of the set of logarithmic slope of $\sF$ at generic points of the special fiber $X_s$ of $X$.

Then, for every $r>r_{\log}(\sF)$, the $r$-th upper numbering ramification subgroup of $G$ acts trivially on $R^i\Psi(j_{!}\sF,f)$ for every $i\in \mathbb Z_{\geq 0}$.
\end{theorem}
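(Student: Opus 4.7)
The plan is to recast the theorem as a vanishing of tame nearby cycles after wild twisting from the base, then apply the singular support calculation of Theorem \ref{sameCC} to reduce to a one-dimensional computation.

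\textbf{Step 1: reduction to nearby slopes.} I would first apply corollary \ref{tncls}: for every closed point $x$ of $X_s$ with a geometric point $\bar x$ above it, the union over $i$ of the logarithmic slopes of the Galois modules $R^i\Psi_{\bar x}(j_!\sF, f)$ coincides with the set of nearby slopes $\Sl((j_!\sF)|_{X_\eta}, f, x)$. It therefore suffices to prove that for every such $x$ and every logarithmic isoclinic, locally constant and constructible sheaf $\sN$ of $\Lambda$-modules on $\eta$ of logarithmic slope $r>r_{\log}(\sF)$, one has
\begin{equation*}
R\Psi^{\rt}_{\bar x}\bigl((j_!\sF)|_{X_\eta}\otimes_\Lambda f_\eta^*\sN,\ f\bigr)=0.
\end{equation*}
Since this vanishing is étale-local at $x$, I may assume henceforth that $S$ is a smooth curve over $k$ (with $s$ a closed point of $S$) and that $\sN$ extends to a locally constant constructible sheaf on $V := S\setminus\{s\}$.

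\textbf{Step 2: case $Z = X_s$.} Here $\sF$ is lisse on $U = X\setminus X_s$. The inequality \eqref{inequalityLogNonLog} gives $r>r_{\log}(X_s,\sF)\geq r(X_s,\sF)-1$, so Theorem \ref{sameCC} applies with $D=X_s$ and yields
\begin{equation*}
SS(j_!(\sF\otimes_\Lambda f^*\sN))=\bT^*_X X\cup\bT^*_{X_s}X,
\end{equation*}
together with $Rj_*(\sF\otimes_\Lambda f^*\sN)=j_!(\sF\otimes_\Lambda f^*\sN)$ from corollary \ref{!=R*}. Since $f$ is smooth with $X_s=f^{-1}(s)$, I would pick a smooth curve $i:C\hookrightarrow X$ through $x$ transverse to $X_s$; the explicit form of $SS$ makes $i$ properly $SS$-transversal. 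Combining corollary \ref{basechange} with the base-change compatibility of nearby cycles along $SS$-transversal immersions, I identify $R\Psi^{\rt}_{\bar x}(j_!(\sF\otimes f^*\sN),f)$ with $R\Psi^{\rt}_{\bar x}((\sF\otimes f^*\sN)|_{C\cap U},f|_C)$. Because $f|_C$ is étale at $x$, this tame nearby cycles stalk equals the $P$-invariants of the generic stalk of $(\sF\otimes f^*\sN)|_C$. By proposition \ref{slope'} and lemma \ref{tensordtsw}, this generic stalk is logarithmic isoclinic of logarithmic slope exactly $r>0$, so has no $P$-invariants, proving the vanishing.

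\textbf{Step 3: horizontal components and main obstacle.} When $Z = X_s\cup Z_f$ with $Z_f$ horizontal, step 2 applies unchanged near points of $X_s\setminus Z_f$. At a closed point $x\in X_s\cap Z_f$, the semi-stable condition provides, étale-locally, smooth horizontal divisors $D_1,\dots,D_l$ meeting $X_s$ transversally, along which $\sF$ is tamely ramified. I would extend Theorem \ref{sameCC} to this setting: the tameness of $\sF$ along each $D_j$ contributes to $SS(j_!(\sF\otimes f^*\sN))$ only through the conormal bundles $\bT^*_{D_j}X$, while the direction normal to $X_s$ is still controlled by the log slope of the twist. Choosing a smooth curve $C$ through $x$ transverse to $X_s$ and meeting each $D_j$ only at $x$, the semi-continuity results of section 3 (propositions \ref{slope'}, \ref{slopehd}, \ref{logslopehd} and corollary \ref{inequalitylog}) should ensure that $(\sF\otimes f^*\sN)|_{C\cap U}$ still has purely wild logarithmic slope $r$ at $x$, and the reduction of step 2 applies. \emph{The hard part} is precisely to make this local $SS$-computation at $x\in X_s\cap Z_f$ fully rigorous; it should follow from combining Beilinson's weak micro-support criterion (Theorem \ref{beilinsonmain}), the Deligne-Laumon semi-continuity of Swan conductors used in the proof of Theorem \ref{sameCC}, and the semi-continuity results of section 3.
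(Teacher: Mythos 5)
Step 1 matches the paper's reduction via corollary \ref{tncls}. In Step 2, you invoke a ``base-change compatibility of nearby cycles along $SS$-transversal immersions,'' which is neither stated nor proved in the paper and is a non-trivial theorem in its own right. The paper's Step 1 bypasses it: it writes the tame nearby cycles as the colimit $\varinjlim_{(n,p)=1} i_n^*Rj_{n*}$ over the tame Kummer covers $S_n=\spec(R[t]/(t^n-\varpi))$ of the trait (by \cite[VII 5.11]{SGA4-2}), checks that the slope hypothesis of Theorem \ref{sameCC} persists over each $X_n=X\times_S S_n$ (from proposition \ref{propramfil}(ii),(iii) one has $r(X_s,h_n^*\sF)-1\leq r_{\log}(X_s,h_n^*\sF)=n\cdot r_{\log}(X_s,\sF)<n\cdot r_{\log}=r_{\log}(s,\sN|_{\eta_n})$), applies corollary \ref{!=R*} to obtain $Rj_{n*}=j_{n!}$ for each $n$, and concludes since $i_n^*j_{n!}=0$. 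Making your curve restriction rigorous would amount to rederiving this colimit argument, including the transversality check over each~$S_n$.

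The essential gap is Step 3. It is not true that tameness of $\sF$ along each horizontal component $D_j$ contributes to $SS(j_!(\sF\otimes_\Lambda f^*\sN))$ only through $\bT^*_{D_j}X$: already for a Kummer sheaf on $\mathbb{G}_m^2\subset\bA^2_k$ the singular support of the extension by zero contains the full fiber $\bT^*_0\bA^2_k$ over the origin, and at a point $x\in X_s\cap Z_f$ in your setting the local product structure (tame along $Z_f$, wild along $X_s$ after the twist) forces $\bT^*_x X\subset SS(j_!(\sF\otimes_\Lambda f^*\sN))$, so \emph{no} curve through $x$ is $SS$-transversal. Hence neither a direct extension of Theorem \ref{sameCC} to normal crossings nor the curve-cutting of Step 2 can work at such points. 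The paper avoids the singular support computation in the normal crossings case entirely. Its Step 2 uses Abhyankar's lemma \cite[XIII 5.2]{SGA1} to produce a tame Kummer cover $\pi:X'\to X$ branched along $Z_f$ along which $\pi_U^*\sF$ extends (by Zariski--Nagata purity) to a lisse sheaf $\sG$ on $X'\setminus X'_s$; it observes that $\sF$ is a direct summand of $\pi_{U*}\pi_U^*\sF$ and that nearby cycles commute with the finite pushforward $\pi$, then applies the nearby cycles functor to the short exact sequence
\begin{equation*}
0\longrightarrow j'_!\pi_U^*\sF\longrightarrow \sG\longrightarrow \sG|_{Z_f^\circ}\longrightarrow 0.
\end{equation*}
The lisse middle term is handled by Step 1, and $\sG|_{Z_f^\circ}$ by induction on the number of horizontal components, proposition \ref{logslopehd} providing the bound $r_{\log}(Z_{m,s},\sG|_{Z_m^\circ})\leq r_{\log}(X_s,\sF)$ under restriction to a component. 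This d\'evissage --- not a normal crossings analogue of Theorem \ref{sameCC} --- is the missing idea.
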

\begin{proof}

 We may assume that $k$ is algebraically closed and that the special fiber $X_s$ is irreducible. To show that the action of $G^{r_{\log}(\sF)+}_{K,\cl}$ on each $R^i\Psi(\sF,f)$ is trivial is equivalent to show that for every $i\in \mathbb{Z}$, for every closed point $x$ of $X_s$, we have
\begin{equation}\label{rnc<r}
r_{\log, K}(R^i\Psi_{x}(\sF,f))\leq r_{\log}(\sF).
\end{equation}
Step 1. We first prove \eqref{rnc<r} in the case where $U=X_\eta$, that is $Z_f=\emptyset$. From corollary \ref{tncls}, we have to prove that for any locally constant and constructible sheaf $\sN$ of $\Lambda$-modules on $\eta$ with pure logarithmic slope $r_{\log}>r_{\log}(\sF)$, we have
\begin{equation}\label{psitameiszero}
R\Psi^{\rt}(\sF\otimes_{\Lambda}f^*_{\eta}\sN,f)=0.
\end{equation}
Let $n>0$ be an integer prime to $p$. Let $\varpi$ be a uniformizer of $R$. Put
\begin{equation*}
S_n=\spec(R[t]/(t^n-\varpi)),
\end{equation*}
 put ${X}_n={X}\times_SS_n$ and put $U_n=U\times_SS_n$. Let $\eta_n$ be the generic point of $S_n$. We thus have the following commutative diagram
\begin{equation*}
\xymatrix{\relax
U_n\ar[r]^{j_n}\ar[d]_{h_n}\ar@{}|-{\Box}[rd]&{X}_n\ar[d]^{g_n}&X_s\ar@{=}[d]\ar[l]_{i_n}\\
U\ar[r]_{j}&{X}&X_s\ar[l]^i}
\end{equation*}
By \cite[VII 5.11]{SGA4-2}, we have
\begin{equation}\label{psimod}
R\Psi^{\rt}(\sF\otimes_\Lambda f_{\eta}^*\sN)=\varinjlim_{(n,p)=1}i^*_nRj_{n*}(h^*_n\sF\otimes_\Lambda h^*_nf_{\eta}^*\sN).
\end{equation}
To show the vanishing \eqref{psitameiszero}, it is thus enough to show that for any integer $n>0$ prime to $p$, we have
\begin{equation}\label{tnc*=!}
Rj_{n*}(h^*_n\sF\otimes_\Lambda h^*_nf_{\eta}^*\sN)=j_{n!}(h^*_n\sF\otimes_\Lambda h^*_n f_{\eta}^*\sN).
\end{equation}
Since $S$ is the henselization of a smooth curve over $k$, since $f$ is of finite type and since $\sF$ and $\sN$ are constructible, we may descend to situation where $S$ is a smooth $k$-curve. From corollary \ref{!=R*}, to show \eqref{tnc*=!}, it is enough to show that
\begin{equation*}
r(X_s,h^*_n\sF)-1<r_{\log}(s,\sN|_{\eta_n})
\end{equation*}
From (ii) and (iii) of proposition \ref{propramfil}, we have
\begin{align}
r(X_s, h_n^*\sF)-1 \leq r_{\log}(X_s, h_n^*\sF) &= n\cdot  r_{\log}(X_s,\sF)=n\cdot r_{\log}(\sF)
\end{align}
Furthermore, $\sN|_{\eta_n}$ has pure logarithmic slope $n\cdot r_{\log}$ at $s$.  Hence,
\begin{equation*}
r(X_s, h_n^*\sF)-1<    n\cdot  r_{\log}=r_{\log}(s,\sN|_{\eta_n})
\end{equation*}
This concludes the proof of Theorem \ref{genleal} in the case where $Z$ has no horizontal part.

Step 2. We now prove the general case. We need to prove \eqref{rnc<r} for a fixed closed point $x$ of $X_s$. As Step 1, we may descend to the case where $S$ is an affine smooth $k$-curve with local coordinate $\varpi$ at the closed point $s$. Since this is an \'etale local question, we may assume
that $X$ is affine and \'etale over $\spec(\sO_S[t_1,\ldots, t_d]/(t_d-\varpi))$ such that the image of $x$ is the point associated to the ideal $(t_1,\ldots, t_d)$, and $Z=X_s\cup Z_f$ with $Z_f$ defined by $t_1\cdots t_m=0$,  where $m < d$. We denote by $\{Z_i\}_{i\in I}$ the set of irreducible components of $Z$ passing through $x$ and, for any subset $J\subseteq I$, we put $Z_J=\cap_{i\in J}Z_i$. Since each $Z_J$ is smooth at $x$, at the cost of replacing $X$ by a Zariski neighborhood of $x$, we may assume that the $Z_J$'s are connected and smooth.

 From Abhyankar's lemma \cite[XIII 5.2]{SGA1}, there exist prime to $p$ integers $n_1,\ldots n_m$ such that after setting
\begin{equation*}
X'=\spec(\sO_X[t'_1,\ldots, t'_m]/(t'^{n_1}_1-t_1,\ldots, t'^{n_m}_m-t_m))
\end{equation*}
the locally constant sheaf $\sF|_{X'\times_XU}$ is unramified at the generic points of $X'\times_X Z_f$. Consider the following cartesian diagram
\begin{equation*}
\xymatrix{\relax
U^{\prime}   \ar[r]^-{j^{\prime}}\ar@{}|(0.5){\Box}[rd]   \ar[d]_-{\pi_U} & X^{\prime}\setminus X'_s\ar[d] \ar[r]\ar@{}|(0.5){\Box}[rd]   &    X^{\prime} \ar[d]^-{\pi}  \\
                          U  \ar[r]^-{j}   &  X \setminus X_s  \ar[r]    &  X
                          }
\end{equation*}
where $X'_s=X'\times_Ss$.
By Zariski-Nagata's purity theorem, $\sF|_{X'\times_XU}=\pi_U^{\ast}\sF$ extends into a locally constant and constructible sheaf of $\Lambda$-modules $\sF'$ on $X'\setminus X'_s$.  Note that $\pi_U : U'\longrightarrow U$ is finite Galois \'etale. Hence, 
$$
\pi_{U\ast}\pi^{\ast}_U\sF\cong \sF\otimes_{\Lambda}\pi_{U\ast}\pi^{\ast}_U\Lambda.
$$ 
Since the cardinality of $\gal(U'/U)$ may be divisible by $\ell$, the locally constant sheaf $\pi_{U\ast}\pi^{\ast}_U\Lambda$ has a direct summand $\mathscr H$ which is a finite successive extension of the constant sheaf $\Lambda$. Hence $\pi_{U\ast}\pi^{\ast}_U\sF$ has a direct summand $\sF\otimes_{\Lambda}\mathscr H$ which is a successive extension of $\sF$. By lemma \ref{modularepresentation}, to show 
$$
r_{\log, K}(R^i\Psi_{{x}}(j_!\sF,f))\leq r_{\log}(X_s,\sF)
$$ 
for all $i\in\mathbb Z$, it is sufficient to show 
$$
r_{\log, K}(R^i\Psi_{{x}}(j_!\pi_{U*}\pi^{\ast}_U\sF,f))\leq r_{\log}(X_s,\sF)
$$. Let ${x}^{\prime}$ be the unique point of $ X_s^{\prime}$ above $x\in X_s$. Since the nearby cycles functor commutes with proper push-forward, there is a Galois-equivariant isomorphism
\begin{equation*}
R\Psi_{{x}}(j_{!}\pi_{U*}\pi^{\ast}_U\sF,f) \xrightarrow{\sim}R\Psi_{{x}^{\prime}}(j_{!}^{\prime}\pi^{\ast}_U\sF,f\circ \pi)
\end{equation*}
Hence, we are left to show that
\begin{equation*}
r_{\log, K}(R^i\Psi_{{x}^{\prime}}(j_{!}^{\prime}\pi^{\ast}_U\sF,f\circ \pi))\leq r_{\log}(X_s,\sF).
\end{equation*}
Observe that the map induced by $\pi$ at the level of the henselianization of $X$ and $X'$ at the generic points of $X_s$ and $X_s^{\prime}$ is unramified. Hence, $r_{\log}(X_s,\sF)=r_{\log}(X_s^{\prime},\pi^{\ast}_U\sF)$. Thus, we are left to show that
\begin{equation*}
r_{\log, K}(R^i\Psi_{{x}^{\prime}}(j_{!}^{\prime}\pi^{\ast}_U\sF,f\circ \pi))\leq r_{\log}(X_s^{\prime},\pi^{\ast}_U\sF).
\end{equation*}
Hence, we are left to prove \eqref{rnc<r} in the case where $\sF$ extends into a locally constant and constructible sheaf of $\Lambda$-modules $\sG$ on $X\setminus X_s$. We will thus make this assumption from now on.

We put $Z^{\circ}_f=Z_f\backslash X_s$. Applying the nearby cycles functor to the short exact sequence
\begin{equation*}
0\longrightarrow j_{!}\sF\longrightarrow \sG\longrightarrow   \sG|_{Z^{\circ}_f}\longrightarrow 0
\end{equation*}
produces a distinguished triangle of complexes with Galois actions
\begin{equation*}
R\Psi_{{x}}(j_{!}\sF,f)\longrightarrow R\Psi_{{x}}(\sG,f)\longrightarrow R\Psi_{{x}}(\sG|_{Z^{\circ}_f},f|_{Z_f})\longrightarrow
\end{equation*}
Apply Step 1 to the locally constant sheaf $\sG$, we are reduced to prove
\begin{equation}\label{whatislefttoprove}
r_{\log, K}(R^i\Psi_{{x}}(\sG|_{Z^{\circ}_f},f|_{Z_f}))\leq r_{\log}(X_s,\sF).
\end{equation}
for every integer $i$. We proceed by the induction on $m$. If $m=0$, there is nothing to do. Suppose that $m>0$ and suppose that \eqref{whatislefttoprove} holds when $Z_f$ has $m-1$ irreducible components. For $1\leq n\leq m$, let $Z_n$ (resp $Z^{\circ}_n$) be the component of $Z_f$ (resp. $Z^{\circ}_f$) defined by $t_n=0$.
Set $Z_{f}^{<m}:=\bigcup_{n=1}^{m-1}Z_n$ (resp. $Z_{f}^{\circ<m}:=\bigcup_{n=1}^{m-1}Z^{\circ}_n$).  Let $i_{m} : Z^{\circ}_m \longrightarrow Z^{\circ}_{f}$, $i_{<m} : Z^{\circ<m}_f \longrightarrow Z^{\circ}_{f}$ and $j_{m} : Z^{\circ}_m \setminus  Z_{f}^{{\circ}<m} \longrightarrow Z^{\circ}_{f}$ be the canonical injections. Applying the nearby cycles functor to the short exact sequence
\begin{equation*}
0\longrightarrow j_{m!}j_{m}^*( \sG|_{Z^{\circ}_{f}})\longrightarrow \sG|_{Z^{\circ}_f}\longrightarrow   i_{<m*}(\sG|_{Z_{f}^{{\circ}<m} })\longrightarrow 0
\end{equation*}
produces a distinguished triangle of Galois modules
\begin{equation*}
R\Psi_{{x}}(j_{m!}j_{m}^*( \sG|_{Z^{\circ}_{f}}),f|_{Z_{f}})\longrightarrow R\Psi_{{x}}(\sG|_{Z^{\circ}_f},f|_{Z_f})\longrightarrow R\Psi_{{x}}(\sG|_{Z_{f}^{\circ<m} },f|_{Z_{f}^{<m} }) \longrightarrow
\end{equation*}
 By the induction hypothesis, we have
\begin{equation*}
r_{\log, K}(R^i\Psi_{{x}}(\sG|_{Z_{f}^{\circ<m} },f|_{Z_{f}^{<m} }))\leq r_{\log}(X_s,\sF)
\end{equation*}
for every integer $i$. We are thus left to prove that
\begin{equation*}
R^i\Psi_{{x}}(j_{m!}j_{m}^*( \sG|_{Z^{\circ}_{f}}),f|_{Z_{f}})\leq r_{\log}(X_s,\sF)
\end{equation*}
for every integer $i$. Let $\jmath_{m} : Z^{\circ}_m \setminus  Z_{f}^{\circ<m} \longrightarrow Z^{\circ}_m$ be the canonical injection. Observe that
\begin{equation*}
j_{m!}j_{m}^*(\sG|_{Z^{\circ}_{f}})\simeq i_{m\ast}\jmath_{m!}\jmath_{m}^*(\sG|_{Z^{\circ}_m}).
\end{equation*}
 Hence,
\begin{equation*}
R\Psi(j_{m!}j_{m}^*(\sG|_{Z^{\circ}_{f}}),f|_{Z_{f}})\simeq   R\Psi(\jmath_{m!}\jmath_{m}^*(\sG|_{Z^{\circ}_m}), f|_{Z_m})
\end{equation*}
We are thus left to prove that
\begin{equation}\label{lastthing}
r_{\log, K}(R^i\Psi_{{x}}(\jmath_{m!}\jmath_{m}^*(\sG|_{Z^{\circ}_m}), f|_{Z_m}))\leq r_{\log}(X_s,\sF).
\end{equation}
for every integer $i$. Put $T_f:=Z_m \cap Z_{f}^{<m}$, $T^{\circ}_f=Z^{\circ}_m\cap Z_{f}^{\circ<m}$ and $T:=Z_{m,s}\cup T_f$. Note that $(Z_m, T)$ is a semi-stable pair over $S$ and that $T_f$ has $m-1$ irreducible components. The sheaf $\jmath_{m}^*(\sG|_{Z^{\circ}_m})$ on $Z^{\circ}_m \setminus  Z_{f}^{\circ<m}$ is extended to a locally constant sheaf $\sG|_{Z^{\circ}_m}$.

Applying the nearby cycle functor to the short exact sequence
\begin{equation*}
0\longrightarrow \jmath_{m!}\jmath_{m}^*(\sG|_{Z^{\circ}_m})\longrightarrow \sG|_{Z^{\circ}_m}\longrightarrow   \sG|_{T^{\circ}_f}\longrightarrow 0
\end{equation*}
produces a distinguished triangle of Galois modules
\begin{equation*}
R\Psi_{{x}}(\jmath_{m!}\jmath_{m}^*(\sG|_{Z^{\circ}_m}),f|_{Z_m})\longrightarrow R\Psi_{{x}}(\sG|_{Z^{\circ}_m},f|_{Z_m})\longrightarrow R\Psi_{{x}}(\sG|_{T^{\circ}_f},f|_{T_f})\longrightarrow
\end{equation*}
 From Step 1 applied to the locally constant sheaf $\sG|_{Z^{\circ}_m}$ on the generic fiber of $Z_m$, we have for every integer $i$
\begin{equation*}
r_{\log, K}(R^i\Psi_{{x}}(\sG|_{Z^{\circ}_m},f|_{Z_m}))\leq r_{\log}(Z_{m,s},\sG|_{Z^{\circ}_m})
\end{equation*}
Applying proposition \ref{slopehd} to the sheaf $\sG$ on $X\backslash X_s$ and the closed immersion $Z_m\longrightarrow X$, we have $r_{\log}(Z_{m,s},\sG|_{Z^{\circ}_m})\leq  r_{\log}(X_s,\sG)=r_{\log}(X_s,\sF)$. Hence, for each integer $i$,
\begin{equation*}
r_{\log, K}(R^i\Psi_{{x}}(\sG|_{Z^{\circ}_m},f|_{Z_m}))\leq r_{\log}(X_s,\sF)
\end{equation*}
By the induction hypothesis and proposition \ref{slopehd}, we have for every integer $i$
\begin{equation*}
r_{\log, K}(R^i\Psi_{{x}}(\sG|_{T^{\circ}_f},f|_{T_f}))\leq r_{\log}(Z_{m,s},\sG|_{Z^{\circ}_m}) \leq r_{\log}(X_s,\sF)
\end{equation*}
Hence, the inequality \eqref{lastthing} holds and this finishes the proof of Theorem \ref{genleal}.
\end{proof}

In view of Theorem \ref{genleal}, it is natural to formulate the following local version of the conjecture from \cite{Leal}.

\begin{conjecture}\label{conjLeallocal}
Let $S$ be an henselian trait with perfect residue field of characteristic $p>0$. Let $G$ be an absolute Galois group of the field of function of $S$. Let $(X,Z)$ be a semi-stable pair over $S$. Let $U:=X\setminus Z$ and $j : U \longrightarrow X$ the canonical injection. Let $\sF$ be a locally constant and constructible sheaf of $\Lambda$-modules on $U$. Suppose that $\sF$ is tamely ramified  along the horizontal part of $Z$. Let $r_{\log}(\sF)$ be the maximum of the set of logarithmic slopes of $\sF$ at  generic points of the special fiber of $X$.

Then, for every $r>r_{\log}(\sF)$, the $r$-th upper numbering ramification subgroup of $G$ acts trivially on $R^i\Psi(j_{!}\sF,f)$ for every $i\in \mathbb Z_{\geq 0}$.

\end{conjecture}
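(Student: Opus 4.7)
The plan is to reduce Conjecture \ref{conjLeallocal} to Theorem \ref{genleal} by two successive reductions: from semi-stable to smooth in equal characteristic, and then from an arbitrary henselian trait to the equal-characteristic case. The stripping of horizontal components of $Z$ is already treated, essentially verbatim, by Step~2 of the proof of Theorem \ref{genleal}; only the compatibility of $R\Psi$ with the successive triangles coming from $Z_f$, the Abhyankar Kummer cover used to extend $\sF$ across $X_s$, and the semi-continuity statement of Proposition \ref{logslopehd} are used there, and none of these ingredients depends on the smoothness of $f$ or on the residual characteristic of $S$. This reduces the question to the case $Z = X_s$.

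To pass from semi-stable to smooth in equal characteristic, one can proceed by a blow-up argument. \'Etale-locally, $X$ is \'etale over $\operatorname{Spec}(R[t_1,\ldots,t_d]/(t_{r+1}\cdots t_d - \pi))$, whose singular locus lies in $X_s$. A suitable sequence of blow-ups of $X$ along smooth strata of the singular locus, combined with a tame Kummer cover $S' \to S$ as in Abhyankar's lemma, produces a proper morphism $\pi : X' \to X \times_S S'$ with $f' := f \circ \pi : X' \to S'$ smooth. The sheaf $\pi^*\sF$ extends by Zariski--Nagata purity to a locally constant and constructible sheaf on $X' \setminus X'_{s'}$, to which Theorem \ref{genleal} applies. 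Since $R\Psi$ commutes with proper push-forward and $R^i\Psi(j_!\sF, f)$ is a direct summand of $R\pi_{s,*} R^i\Psi((j')_!\pi^*\sF, f')$ up to the tame twist induced by $S' \to S$, and since Proposition \ref{propramfil}(iii) controls the ramification under such a tame base change, the bound on the logarithmic slopes of $\sF$ at the generic points of $X_s$ transfers. The main technical point here is to verify that the logarithmic slopes of $\pi^*\sF$ at the generic points of $X'_{s'}$ are bounded by $e \cdot r_{\log}(\sF)$, with $e$ the ramification index of $S'/S$; this should follow from Proposition \ref{logslopehd} applied to the locally constant extension of $\pi^*\sF$ along each new component of $X'_{s'}$.

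The main obstacle is the mixed-characteristic case. The proof of Theorem \ref{genleal}, and the blow-up reduction above, relies on the singular support and characteristic cycle for \'etale sheaves, currently developed only for schemes over a perfect field of characteristic $p > 0$. One option is to spread out $(X, Z, \sF)$, compare with an equal-characteristic specialization and transfer the bound, invoking the compatibility of $R\Psi$ and of Abbes--Saito's ramification filtration with such specializations. A more direct, and more ambitious, alternative would be to extend Beilinson's and Saito's constructions to arithmetic schemes over a trait, which would allow the local arguments of Section~4 to go through unchanged. Either route requires substantial new input, and this is where the real difficulty in Conjecture \ref{conjLeallocal} lies.
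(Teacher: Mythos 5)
This statement is Conjecture~\ref{conjLeallocal}, which the paper \emph{does not prove}: it is formulated as an open problem, with Theorem~\ref{genleal} establishing only the special case where $S$ is equal characteristic and $f$ is smooth. Your proposal is a sketch of possible reductions rather than a proof, and you correctly acknowledge at the end that closing the mixed-characteristic gap ``requires substantial new input.'' That part of your assessment is sound. However, one of your proposed reduction steps contains a genuine error worth flagging.

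The reduction from semi-stable to smooth by a combination of blow-ups along singular strata and a tame Kummer base change $S'\to S$ does not work. A semi-stable (non-smooth) degeneration has a geometrically reducible special fiber, and this cannot be undone by modifications and tame base change. Concretely, in the local model $\spec(R[t_1,t_2]/(t_1t_2-\pi))$, base-changing by $\pi=s^e$ produces an $A_{e-1}$-singularity whose minimal resolution has a special fiber consisting of a chain of rational curves; the resulting morphism to $S'$ has a reduced normal crossings special fiber but is still not smooth. More structurally, the monodromy on $H^1$ of a family of curves acquiring a node is unipotent of infinite order, so no finite base change nor birational modification of the total space can make the family smooth. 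Thus the proposed reduction of Step~1 (the case $Z=X_s$ with $X/S$ semi-stable) to the smooth case cannot be executed, and the semi-stable case remains a real obstruction even in equal characteristic. Your observation that the inductive stripping of horizontal components in Step~2 of the proof of Theorem~\ref{genleal} is largely insensitive to the smoothness of $f$ is reasonable (the pairs $(Z_m,T)$ produced there remain semi-stable and the semi-continuity input Proposition~\ref{logslopehd} is stated in the generality needed), but this only pushes the problem onto the base case $Z=X_s$, which is exactly where the smoothness of $f$ is used essentially through Corollary~\ref{!=R*} and the characteristic cycle computation of Theorem~\ref{sameCC}.

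In short: the conjecture is open in the paper; your mixed-characteristic caveat is appropriate; but the semi-stable-to-smooth reduction you sketch is not merely technically incomplete, it is false, and this is a second genuine difficulty independent of the residue-characteristic issue.
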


\section{Ramification along the special fiber of a relative curve over a trait}

\subsection{}\label{notations} In this section, let $k$ be a perfect field of characteristic $p>0$. Let $S$ be a smooth connected $k$-curve, $s$ a closed point of $S$, $\bar s$ a geometric point above $s$, and $\bar\eta$ a geometric generic point of the strict henselization $S_{(\bar s)}$ of $S$ at $\bar s$. Put $V=S\setminus \{s\}$. Let $f :\mathcal{C}\longrightarrow S$ be a smooth proper relative curve over $S$ with geometrically connected fibers. Put $U=f^{-1}(V)$. Consider the cartesian diagram
\begin{equation}
\xymatrix{\relax
U\ar[r]^j\ar[d]_{f_V}\ar@{}|{\Box}[rd]&\mathcal C\ar[d]^f\ar@{}|{\Box}[rd]&D\ar[l]_i\ar[d]^{f_s}\\
V\ar[r]&S&s\ar[l]}
\end{equation}
Let $\Lambda$ be a finite field of characteristic $\ell\neq p$ and $\sF$ an locally constant and constructible sheaf of $\Lambda$-modules on $U$. Let $Z(\sF)$ be the set of closed points $x\in D$ such that $\bT_x \mathcal C$ is an irreducible component of the singular support of $j_! \sF$. 

We give a boundedness result for the cardinality of $Z(\sF)$, by applying our main result Theorem \ref{genleal}.  It is an $\ell$-adic analogue of \cite[Th. 2]{teyConjThese}.

\begin{theorem}\label{bounded}
Assume that each fiber of $f:\mathcal C\to S$ has genus $g\geq 2$. 
  Let $\sF$ be a locally constant and constructible sheaf of $\Lambda$-modules on  $U$. Then, we have
\begin{equation}
(2g-2)\cdot \rk_{\Lambda}\sF+2g\cdot \rk_{\Lambda}\sF\cdot  r_{\log}(D,\sF)\geq  \sharp Z(\sF)
\end{equation}
  \end{theorem}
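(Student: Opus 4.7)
The plan is to apply Saito's direct image formula (Theorem \ref{conductor}) to the smooth proper morphism $f:\mathcal{C}\to S$ and the sheaf $j_!\sF$, and to bound the two sides of the resulting identity. Because $\sF$ is locally constant on $U$, the singular support $SS(j_!\sF)$ restricts to the zero section over $U$; since $f$ is smooth, $f$ is properly $SS(j_!\sF)$-transversal over the dense open $V\subset S$. Proper base change gives $(R^if_*j_!\sF)_s=H^i(\mathcal{C}_s,0)=0$ and $(R^if_*j_!\sF)_{\bar\eta}=H^i(\mathcal{C}_{\bar\eta},\sF|_{\mathcal{C}_{\bar\eta}})$. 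As $\sF|_{\mathcal{C}_{\bar\eta}}$ is locally constant on the smooth proper curve $\mathcal{C}_{\bar\eta}$ of genus $g$, Grothendieck--Ogg--Shafarevich yields $\sum_i(-1)^i h^i=(2-2g)\rk_\Lambda\sF$ with $h^i:=\dim_\Lambda H^i(\mathcal{C}_{\bar\eta},\sF|_{\mathcal{C}_{\bar\eta}})$, and Theorem \ref{conductor} provides the identity
\[
(CC(j_!\sF),[df])_{\bT^*\mathcal{C},D} = (2g-2)\rk_\Lambda\sF - \sum_i(-1)^i\sw_s(H^i).
\]

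To upper bound the right-hand side, I would invoke the main Theorem \ref{genleal} for the semi-stable pair $(\mathcal{C},D)$ over the henselization of $S$ at $s$ (with empty horizontal part): it forces every logarithmic slope of each $R^b\Psi(j_!\sF,f)$ to be at most $r_{\log}(D,\sF)$. The spectral sequence $H^a(\mathcal{C}_{\bar s},R^b\Psi(j_!\sF,f))\Rightarrow H^{a+b}(\mathcal{C}_{\bar\eta},\sF|_{\mathcal{C}_{\bar\eta}})$ propagates this slope bound to the abutment, so $\sw_s(H^i)\leq r_{\log}(D,\sF)\cdot h^i$. Using $h^0,h^2\leq\rk_\Lambda\sF$ and $h^1=h^0+h^2+(2g-2)\rk_\Lambda\sF$, one gets $h^1\leq 2g\,\rk_\Lambda\sF$; combined with $-\sum_i(-1)^i\sw_s(H^i)\leq\sw_s(H^1)$, this bounds the right-hand side by $(2g-2)\rk_\Lambda\sF+2g\,\rk_\Lambda\sF\cdot r_{\log}(D,\sF)$.

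For the lower bound on the left-hand side, I would decompose
\[
CC(j_!\sF) = \rk_\Lambda\sF\cdot[\bT^*_\mathcal{C}\mathcal{C}] + a[\bT^*_D\mathcal{C}] + \sum_{x\in Z(\sF)}c_x[\bT^*_x\mathcal{C}] + \sum_i d_i[\bar L_i],
\]
where the $[\bar L_i]$ are the remaining two-dimensional conical components above $D$ (the line sub-bundles appearing in \eqref{CCnondeg}), with all coefficients non-negative and $c_x\geq 1$ by Saito's positivity for $CC$. The zero section does not contribute since $df$ is nowhere vanishing on $\mathcal{C}$. Each $[\bT^*_x\mathcal{C}]$ meets $[df]$ transversely at the single point $(x,df(x))$ (the horizontal span of $[df]$ and the vertical span of $[\bT^*_x\mathcal{C}]$ are complementary in $\bT^*\mathcal{C}$), contributing $c_x$. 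The conormal yields an excess intersection since $df|_D$ lies entirely in $\bT^*_D\mathcal{C}$; identifying the excess normal bundle with $\bT^*D$ of degree $2g-2$ shows its contribution equals $a(2g-2)\geq 0$. I would double-check this on the tame sanity check, where $a=\rk_\Lambda\sF$, the wild components vanish, and both sides collapse to $(2g-2)\rk_\Lambda\sF$. The remaining $[\bar L_i]$ meet $[df]$ properly, contributing non-negative integers. Assembling these, the left-hand side is at least $\sum_{x\in Z(\sF)}c_x\geq\sharp Z(\sF)$, and combining with the upper bound gives the theorem.

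The main technical obstacles I anticipate are the positivity $c_x\geq 1$ (which ultimately rests on Saito's structural results on $CC$ of constructible sheaves on surfaces, or on Proposition \ref{51314}(i) after reducing to a perverse shift) and the excess intersection computation for $[\bT^*_D\mathcal{C}]$, which can be pinned down by comparison with the explicit non-degenerate formula \eqref{CCnondeg} and the tame sanity check.
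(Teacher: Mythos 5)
Your proposal is correct and follows essentially the same route as the paper: apply Saito's conductor formula (Theorem \ref{conductor}) to $j_!\sF$, bound the Swan term via Theorem \ref{genleal} together with the nearby-cycle spectral sequence, and bound the intersection side below using positivity of the $CC$ coefficients (Proposition \ref{51314}(i)) and the fact that each fiber $\bT^*_x\mathcal{C}$ with $x\in Z(\sF)$ meets $[df]$ in a single point. The one place you diverge is in evaluating the contribution of the conormal component $a[\bT^*_D\mathcal{C}]$: you propose an excess-intersection argument identifying the excess bundle with $\bT^*D$, whereas the paper simply applies the conductor formula to the constant sheaf $\Lambda$ (where $CC(j_!\Lambda)=[\bT^*_\mathcal{C}\mathcal{C}]+[\bT^*_D\mathcal{C}]$ and the Swan term vanishes) to conclude $([\bT^*_D\mathcal{C}],[df])_{\bT^*\mathcal{C},D}=2g-2$ directly; this is precisely the ``tame sanity check'' you describe, promoted from a check to the actual proof, and it avoids having to pin down the excess bundle. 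Likewise, rather than separating the $[\bT^*_x\mathcal{C}]$ terms and verifying transversality, the paper treats all components $Z$ uniformly through the Fulton-type bound $([Z],[df])\geq\sharp(Z\times_{\bT^*\mathcal{C}}df)_{\red}$, which is slightly more economical but conceptually identical.
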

\begin{proof}
The Swan conductor $\sw_s(-)$ at $s$ is a well-defined function on $D^b_c(V,\Lambda)$ associating to $\sK$ the integer $\sum_{n}(-1)^n\sw_s (\mathcal{H}^n\sK)$. From Theorem \ref{genleal}, we have for every $n=0,1,2$,
 \begin{equation}
  \sw_s (R^n f_{V*}\sF)\leq \dim_{\Lambda} H^n(\mathcal{C}_{\bar\eta},\sF|_{\mathcal{C}_{\bar\eta}})\cdot  r_{\log}(D,\sF).
  \end{equation}
Note that for $n=0,2$, we have 
\begin{equation}
\dim_{\Lambda} H^n(\mathcal{C}_{\bar\eta},\sF|_{\mathcal{C}_{\bar\eta}})\leq \rk_{\Lambda}\sF.
\end{equation} 
The Euler-Poincaré characteristic formula applied to $\sF|_{\mathcal{C}_{\bar\eta}}$ gives furthermore
\begin{equation}
\dim_{\Lambda} H^1(\mathcal{C}_{\bar\eta},\sF|_{\mathcal{C}_{\bar\eta}})\leq 2g\cdot \rk_{\Lambda}\sF.
\end{equation} 
We deduce 
 \begin{equation}\label{ptitteequation}
-2g\cdot \rk_{\Lambda}\sF\cdot  r_{\log}(D,\sF)\leq\sw_s (Rf_{V*}  \sF)\leq 2\cdot \rk_{\Lambda}\sF\cdot  r_{\log}(D,\sF).
  \end{equation}
By Saito's conductor formula recalled in Theorem \ref{conductor}, we have
 \begin{align}\label{3}
(2-2g)\cdot \rk_{\Lambda}\sF + \sw_s (Rf_{V*}  \sF) = -(CC(j_!\sF), df)_{\bT^{\ast}{\mathcal{C},D}}.
\end{align}
Since $\sF$ is locally constant on $U$, we write
\begin{align}\label{theCC}
CC(j_!\sF) =\rank_{\Lambda}\sF\cdot[\bT^{\ast}_{\mathcal{C}}\mathcal{C} ]+\alpha [\bT^{\ast}_{D}\mathcal{C}]+\sum_Z \beta_Z [Z],
\end{align}
where $Z$ runs over the set of integral 2-dimensional closed conical subschemes in $\bT^{\ast}\mathcal{C}$ supported on $D$, distinct from the conormal bundle $\bT^{\ast}_{D}\mathcal{C}$ and the zero section $\bT^*_{\mathcal{C}}\mathcal{C}$. 
\\ \indent
The section $df : \mathcal{C}\longrightarrow T^{\ast}\mathcal{C}$ is a regular embedding of codimension $2$. Since $f:\mathcal{C}\longrightarrow S$ is smooth, we have $df\cap \bT^*_{\mathcal{C}}\mathcal{C}=\emptyset$. By applying Theorem \ref{conductor} to the constant sheaf $\Lambda$ on $U$, we obtain
\begin{equation}
([\bT^{\ast}_{D}\mathcal{C}],[df])_{\bT^{\ast}{\mathcal{C},D}}=(CC(j_!\Lambda), [df])_{\bT^{\ast}{\mathcal{C},D}}=2g-2.
\end{equation}
Let $Z$ be an integral 2-dimensional closed conical subscheme in $\bT^{\ast}\mathcal{C}$ supported on $D$, distinct from $\bT^{\ast}_{D}\mathcal{C}$ and $\bT^*_{\mathcal{C}}\mathcal{C}$. Notice that, for any closed point $x\in D$, the fiber $(\bT^{\ast}_{D}\mathcal{C})_x\subset \bT^*_x\mathcal{C}$ is a $1$-dimensional $k(x)$-vector space spanned by $df_x$.   Hence we have  $\bT^{\ast}_{D}\mathcal{C}\subseteq Z$, if $\dim_k(Z\times_{\bT^*\mathcal{C}} df)\geq 1$. Thus, by our assumption, $Z$ meets $df$ at most at a finite number of points. Hence, $df$  and $Z$ intersect properly. As a general fact from intersection theory \cite[7.1]{Fulton}, we have 
\begin{align}\label{0}
([Z] , [df])_{T^{\ast}{\mathcal{C},D}}\geq \sharp (Z\times_{\bT^{\ast}\mathcal{C}} df)_{\red}
\end{align}
In particular, if $Z=\bT_x^*\mathcal{C}$ for a closed point $x\in D$, we have $$([Z] , [df])_{T^{\ast}{\mathcal{C},D}}=\sharp (Z\times_{\bT^{\ast}\mathcal{C}} df)_{\red}=1$$
Hence, we have 
\begin{align}\label{ineqconductorformula}
(2g-2)\cdot \rk_{\Lambda}\sF-\sw_s (Rf_{V*}  \sF) &=(CC(j_!\sF), [df])_{\bT^{\ast}{\mathcal{C},D}} \\
&=\sum_Z \beta_Z ([Z],[df])_{\bT^*\mathcal{C},D} +(2g-2)\cdot \alpha\nonumber\\
&\geq \sum_{Z}\beta_Z\cdot\sharp(Z\times_{\bT^{\ast}\mathcal{C}} df)_{\red} +(2g-2)\cdot \alpha.\nonumber
\end{align}
Let us recall from proposition \ref{51314} that $\alpha$ and that each $\beta_Z$ are positive integers. 
We have 
\begin{equation}\label{intersectionineq}
\sum_{Z}\beta_Z\cdot\sharp(Z\times_{\bT^{\ast}\mathcal{C}} df)_{\red}\geq \sharp Z(\sF)
\end{equation}
Thus, from \eqref{ptitteequation}, \eqref{ineqconductorformula} and \eqref{intersectionineq}
\begin{align*}
(2g-2)\cdot \rk_{\Lambda}\sF+2g\cdot \rk_{\Lambda}\sF\cdot  r_{\log}(D,\sF)&\geq\\
(2g-2)\cdot \rk_{\Lambda}\sF- \sw_s (Rf_{V*}  \sF)
&\geq   \sharp Z(\sF).
\end{align*}
This finishes the proof of Theorem  \ref{bounded}.
\end{proof}

\begin{remark}
When the fibers of $f:\mathcal C\to S$ have genus $g=0$, we have $Z(\sF)=\emptyset$.  Indeed, for the case where $g=0$, we may assume that $k$ is algebraically closed. The \'etale fundamental group of $\mathcal C_{\eta}$ is isomorphic to the Galois group $\mathrm{Gal}(\overline{\eta}/\eta)$. Hence, there is a sheaf of $\Lambda$-modules $\mathscr G$ on $\eta$ such that $\mathscr F|_{\mathcal C_{\eta}}\cong f^*_{\eta}\mathscr G$. By passing to the limit, after replacing $S$ by an \'etale neighborhood of $s$, we can find a locally constant and constructible sheaf of $\Lambda$-modules $\sG$ on $V$ such that $\sF\cong f_V^*\sG$. From \cite[Th. 1.4]{bei}, we see that 
\begin{equation*}
SS(j_!\sF)=\bT^*_{\mathcal C}\mathcal C\bigcup\bT^*_D\mathcal C.
\end{equation*}

When the fibers of $f:\mathcal C\to S$ have genus $g=1$, we refer to the more general Theorem \ref{mainprop} in the next section due to T. Saito. It is valid for abelian schemes over curves.
\end{remark}

\section{Ramification along the special fiber of an abelian scheme over a trait (after T. Saito)}

\subsection{}
In this section, $K$ denotes a field of characteristic $p\geq 0$, $A$ an abelian variety over $K$ with origin $e_A$ and $g:X\longrightarrow A$ a finite \'etale covering of degree $m$ such that $g^{-1}(e_A)$ contains a rational point $e_X\in X(K)$. Notice that $X$ has a $K$-abelian variety structure with origin $e_X$ making $g:X\longrightarrow A$ an \'etale isogeny (cf. \cite[\S 18]{Mum}).

\begin{lemma}\label{lemma1}
Assume that the \'etale isogeny $g:X\longrightarrow A$ is Galois. Then, $\ker g$ is a constant finite abelian $K$-group scheme and there is an isomorphism of abelian groups
\begin{align}\label{keraut}
\psi:(\ker g)(K) \longrightarrow \aut (X/A),\ \ \  x\mapsto +x:X\longrightarrow X \nonumber.
\end{align}
\end{lemma}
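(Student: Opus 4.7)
The plan is to analyze the action of $T := \ker g$ on $X$ by translation and combine it with the Galois hypothesis after passing to a separable closure. Since $g$ is \'etale, $T$ is a finite \'etale $K$-group scheme of order $m$, abelian as a closed subgroup scheme of the abelian variety $X$. The map $\psi$ is well-defined because for $t \in T(K)$, the translation $+t:X\longrightarrow X$ is a $K$-morphism satisfying $g\circ(+t) = g$, and it is a group homomorphism since $+_{t_1+t_2} = (+t_1)\circ(+t_2)$. It is injective because $+t = \id$ forces $e_X + t = e_X$, hence $t = 0$. The content of the lemma is therefore the surjectivity of $\psi$ together with the assertion that $T$ is a constant group scheme.

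I would first work over $\overline K$. Translation yields an injection $\psi_{\overline K}: T(\overline K)\hookrightarrow \aut(X_{\overline K}/A_{\overline K})$. Both sets have cardinality $m$: the source because $T$ is \'etale of rank $m$, and the target because any \'etale isogeny of abelian varieties is automatically geometrically Galois (translations by $T(\overline K)$ provide at least $m$ deck transformations, while a connected \'etale cover of degree $m$ admits at most $m$ such). Hence $\psi_{\overline K}$ is a bijection.

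To descend this to $K$, I would check that $\psi_{\overline K}$ is equivariant for the natural action of $\gal(\overline K/K)$ on both sides: for every $\sigma$ and every $t$, one has $\sigma\cdot(+t) = +\sigma(t)$ because the group law on $X$ is defined over $K$. The hypothesis that $g$ is Galois over $K$ means precisely that $|\aut(X/A)| = m$, equivalently that $\gal(\overline K/K)$ acts trivially on $\aut(X_{\overline K}/A_{\overline K})$. Transporting through $\psi_{\overline K}^{-1}$, the Galois action on $T(\overline K)$ is trivial as well, so $T$ is the constant group scheme attached to the abelian group $T(\overline K)$, and $|T(K)| = m$. Combined with the already-established injectivity of $\psi$, this forces $\psi$ to be a bijection onto $\aut(X/A)$.

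There is no substantial obstacle here; the only delicate point is keeping straight the distinction between $g$ being \emph{geometrically} Galois (which is automatic for \'etale isogenies between abelian varieties) and $g$ being Galois \emph{over $K$} (the stated hypothesis, equivalent to the $K$-rationality of every deck transformation). The whole argument hinges on converting this $K$-rationality into triviality of the Galois action on $T(\overline K)$ via the canonical equivariant identification $\psi_{\overline K}$.
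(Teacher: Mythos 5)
Your proof is correct and follows essentially the same strategy as the paper: use the Galois hypothesis to show $\ker g$ is constant, define $\psi$ by translation, and conclude by combining injectivity with equal cardinality. The only organizational difference is that you make explicit the Galois-equivariance of $\psi_{\overline K}$ and the equivalence ``Galois over $K$ $\Leftrightarrow$ trivial $\gal(\overline K/K)$-action on $\aut(X_{\overline K}/A_{\overline K})$,'' whereas the paper reaches the constancy of $\ker g$ more tersely by observing that the transitive $\aut(X/A)$-action on the fiber carries the rational point $e_X$ onto every geometric point of $g^{-1}(e_A)$; your version spells out a step the paper leaves implicit, but the underlying idea is the same.
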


\begin{proof}
Indeed, $\ker g=g^{-1}(e_A)$ is a finite  \'etale $K$-scheme over $e_A$. Since $\ker g$ contains a rational point and is an $\mathrm{Aut}(X/A)$-torsor, it is a disjoint union of rational points in $X(K)$. Hence, $\ker g$ is a constant finite abelian $K$-group scheme. We have a bijection $$\varphi:\mathrm{Aut}(X/A)\longrightarrow \ker(g)(K),\ \ \ \sigma\mapsto \sigma(0).$$
For any $x\in (\ker g)(K)$, we have the following commutative diagram
\begin{equation*}
\xymatrix{\relax
\{x\}\times_kX\ar[dr]_g\ar@{^(->}[r]&g^{-1}(e_A)\times_kX\ar[d]^{g\times g}\ar[r]\ar@{}|-{\Box}[rd]&X\times_kX\ar[d]^{g\times g}\ar[r]^-{{m}_X}&X\ar[d]^g\\
&\{e_A\}\times_kA\ar[r]&A\times_kA\ar[r]^-{{m}_A}&A}
\end{equation*}
The outline of the diagram gives the following commutative diagram
\begin{equation*}
\xymatrix{\relax
X\ar[rd]_g\ar[r]^{+x}&X\ar[d]^g\\
&A}
\end{equation*}
Hence, $+x$ is an element of $\aut (X/A)$ for any $x\in\ker(g)(K)$. It is easy to see that $\psi$ is a groups homomorphism and is the inverse of $\varphi$. We finish the proof.
\end{proof}

\begin{lemma}\label{lemma2}
Assume that the \'etale isogeny $g:X\longrightarrow A$ is Galois. Let $n$ be a positive integer such that $(n,m)=1$. Then, for any $a\in A[n](K)$, we have 
\begin{equation*}
g^{-1}(a)\cong\coprod_{m}\spec(K).
\end{equation*}
\end{lemma}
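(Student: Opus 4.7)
The plan is to reduce the claim to producing a single $K$-rational point in $g^{-1}(a)$: once such a point exists, lemma \ref{lemma1} shows that $(\ker g)(K)$ acts simply transitively on $g^{-1}(a)$ by translation, forcing every other geometric point of the fiber to be $K$-rational and giving $g^{-1}(a)\cong\coprod_m\spec(K)$.

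To produce such a point, I would start from an arbitrary geometric point $x_0\in g^{-1}(a)(\overline{K})$ (which exists since $g$ is finite \'etale). The first useful observation is that $g(nx_0)=na=0$, so $nx_0$ lies in $(\ker g)(\overline K)$, which equals $(\ker g)(K)$ by lemma \ref{lemma1}. Since $\ker g$ is a finite abelian group of order $m$ and $\gcd(n,m)=1$, multiplication by $n$ is a bijection on $(\ker g)(K)$, so one finds $w\in(\ker g)(K)$ with $nw=nx_0$. Setting $x_1:=x_0-w$ yields a point of $g^{-1}(a)(\overline{K})$ killed by $n$.

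The heart of the argument — and the main obstacle — is then the Galois descent step. For $\sigma\in\gal(\overline K/K)$, the difference $z_\sigma:=\sigma(x_1)-x_1$ lies in $(\ker g)(\overline K)=(\ker g)(K)$, so $mz_\sigma=0$; on the other hand $nz_\sigma=\sigma(nx_1)-nx_1=0$. From $\gcd(n,m)=1$ and B\'ezout one concludes $z_\sigma=0$, so $x_1\in X(K)$. This is the one place coprimality between the level $n$ of the torsion point and the degree $m$ of the isogeny is truly used; without it, $\sigma$ could permute the fiber non-trivially. Having $x_1\in g^{-1}(a)(K)$, the translation action of $(\ker g)(K)$ exhibits $g^{-1}(a)$ as the disjoint union $\{x_1+z\mid z\in(\ker g)(K)\}$ of $m$ copies of $\spec(K)$, concluding the proof.
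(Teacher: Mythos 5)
Your proof is correct. It reaches the same goal — producing a $K$-rational point of $g^{-1}(a)$ and then using the transitive translation action of $(\ker g)(K)$ — but via a more hands-on argument than the paper's. The paper applies $R\Gamma(G_K,-)$ to the short exact sequence $0\to(\ker g)(\overline K)\to X(\overline K)\to A(\overline K)\to 0$ and shows the connecting class $\delta(a)\in H^1(G_K,\ker g)=\Hom(G_K,\ker g)$ vanishes because it is killed both by $m$ (order of $\ker g$) and by $n$ (since $na=0$). Your argument is the explicit, cocycle-level version of this: you first normalize an arbitrary preimage $x_0$ to a preimage $x_1$ with $nx_1=0$ (possible since multiplication by $n$ is bijective on $(\ker g)(K)$), and then observe that the cocycle $z_\sigma=\sigma(x_1)-x_1$ is literally zero because it is killed by both $m$ and $n$. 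So where the paper shows a cohomology class is trivial, you show the specific cocycle vanishes on the nose after a clever normalization. This buys you an argument that avoids any reference to Galois cohomology or the long exact sequence, at the small cost of having to exhibit the normalization step explicitly. Both uses of the coprimality $(n,m)=1$ are essentially the same B\'ezout argument, just phrased at different levels of abstraction.
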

\begin{proof}
Let $\langle a\rangle$ be the constant $K$-subgroup scheme of $A$ generated by $a\in A[n](K)$. Its cardinality is divisible by a power of $n$. We have an exact sequence of finite \'etale abelian $K$-groups
\begin{equation}\label{kergg-1aa}
0\longrightarrow\ker g\longrightarrow g^{-1}\langle a\rangle\longrightarrow \langle a\rangle\longrightarrow 0
\end{equation}
By Lemma \ref{lemma1}, $\ker g$ is a constant finite abelian $K$-group scheme of cardinality $m$. The cardinalities of $\ker g$ and $\langle a\rangle$ are co-prime, hence $\mathrm{Ext}^1(\langle a\rangle,\ker g)=0$. Thus, the short exact sequence \eqref{kergg-1aa} is split and $g^{-1}\langle a\rangle$ is a constant finite abelian $K$-group scheme. Hence $g^{-1}(a)\cong\coprod_{m}\spec(K)$. 
\end{proof}

\begin{lemma}\label{coro3}
Assume that the \'etale isogeny $g:X\longrightarrow A$ is Galois. Then, for any $a\in A(K)$ and for any $b\in (g^{-1}(a))(K)$, the following  diagram of schemes is cartesian
\begin{equation}\label{ba}
\xymatrix{\relax
X\ar[d]_g\ar[r]^{+b}\ar@{}|-{\Box}[rd]&X\ar[d]^g\\
A\ar[r]_{+a}&A}
\end{equation}
\end{lemma}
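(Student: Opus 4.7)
The approach is to reduce the cartesianness to a purely formal observation: a commutative square whose two horizontal arrows are isomorphisms is automatically cartesian. Granting this, the content of the lemma collapses to checking that the square \eqref{ba} commutes, together with the elementary remark that both horizontal arrows are isomorphisms (with inverses $+(-b)$ and $+(-a)$).

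First I would verify commutativity. Since the \'etale isogeny $g:X\to A$ is a homomorphism of abelian varieties (cf.\ \cite[\S 18]{Mum}), for any functorial point $x$ of $X$ one has
\[
g(x+b)=g(x)+g(b)=g(x)+a,
\]
the last equality being the hypothesis $b\in g^{-1}(a)(K)$. This is precisely the identity $g\circ(+b)=(+a)\circ g$ asserting commutativity of \eqref{ba}.

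Second, I would spell out the formal fact. Given a commutative square
\[
\xymatrix{W\ar[d]_{f}\ar[r]^-{\alpha}&W'\ar[d]^{f'}\\ Y\ar[r]_-{\beta}&Y'}
\]
in which $\alpha$ and $\beta$ are isomorphisms, commutativity gives $f=\beta^{-1}\circ f'\circ\alpha$, and a direct check shows that $w\mapsto(\alpha(w),f(w))$ and $(w',y)\mapsto\alpha^{-1}(w')$ are mutually inverse maps between $W$ and $W'\times_{Y'}Y$; hence the square is cartesian. Applied to our situation with $\alpha=(+b)$ and $\beta=(+a)$, this yields the lemma. No genuine obstacle is anticipated: the only non-formal input is the defining equality $g(b)=a$.
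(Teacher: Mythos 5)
Your proof is correct and follows essentially the same route as the paper's: the paper also deduces commutativity from the group-homomorphism property of $g$ (packaged in a larger diagram involving $m_X$, $m_A$) and then concludes cartesianness from the fact that the horizontal arrows $+b$ and $+a$ are isomorphisms. Your phrasing via functorial points, $g(x+b)=g(x)+a$, is just a more streamlined rendering of the same observation.
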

\begin{proof}

We have the following commutative diagram
\begin{equation*}
\xymatrix{\relax
\{b\}\times_kX\ar[dr]_g\ar@{^(->}[r]&g^{-1}(a)\times_kX\ar[d]^{g\times g}\ar[r]\ar@{}|-{\Box}[rd]&X\times_kX\ar[d]^{g\times g}\ar[r]^-{{m}_X}&X\ar[d]^g\\
&\{a\}\times_kA\ar[r]&A\times_kA\ar[r]^-{{m}_A}&A}
\end{equation*}
The outline of the diagram above implies that \eqref{ba} is commutative. Since $+b$ and $+a$ are isomorphisms, the diagram \eqref{ba} is furthermore cartesian.

\end{proof}

\begin{corollary}\label{coro4}
Assume that the \'etale isogeny $g:X\longrightarrow A$ is Galois. Let $\Lambda$ be a commutative finite ring in which $p$ is invertible. Let $\sF$ be a locally constant and constructible sheaf of $\Lambda$-modules on $A$ trivialized by $X$. Then, for any $a\in A[n](K)$ with $(n,m)=1$, we have 
\begin{equation*}
(+a)^*\sF\cong\sF.
\end{equation*}
\end{corollary}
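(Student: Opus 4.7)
The plan is to use Galois descent along the cover $g : X \longrightarrow A$. By Lemma \ref{lemma2} applied to $a \in A[n](K)$ with $(n,m)=1$, there is a rational point $b \in g^{-1}(a)(K)$. Lemma \ref{coro3} then provides a cartesian square
\begin{equation*}
\xymatrix{
X \ar[r]^-{+b} \ar[d]_-{g} & X \ar[d]^-{g} \\
A \ar[r]_-{+a} & A
}
\end{equation*}
yielding a canonical isomorphism $g^{\ast}(+a)^{\ast}\sF \simeq (+b)^{\ast}g^{\ast}\sF$. Since $\sF$ is trivialized by $g$, a choice of isomorphism $g^{\ast}\sF \simeq \Lambda_X^r$ (with $r = \rk_{\Lambda}\sF$) shows that $(+b)^{\ast}g^{\ast}\sF \simeq \Lambda_X^r$, so $(+a)^{\ast}\sF$ is trivialized by $g$ as well.

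By Galois descent along the $G$-torsor $g$, locally constant constructible sheaves of $\Lambda$-modules on $A$ trivialized by $g$ are equivalent to finite-dimensional $\Lambda$-representations of $G = \aut(X/A)$, which by Lemma \ref{lemma1} is identified with $(\ker g)(K)$ acting on $X$ by translations. Under this equivalence, both $\sF$ and $(+a)^{\ast}\sF$ correspond to $G$-equivariant structures on the trivial sheaf $\Lambda_X^r$, so it remains to verify that these two $G$-representations are isomorphic.

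The key observation, which I expect to be the main technical point, is that translation by $b$ on $X$ commutes with every element of $G$: indeed $G$ acts by translations by elements of $\ker g \subset X(K)$, and $X$ is commutative. Hence $+b : X \longrightarrow X$ is $G$-equivariant, and the $G$-equivariant structure on $(+b)^{\ast}g^{\ast}\sF$ obtained by pulling back through $+b$ is intertwined by the canonical isomorphism $(+b)^{\ast}\Lambda_X^r \simeq \Lambda_X^r$ with the original $G$-equivariant structure on $g^{\ast}\sF$. Consequently $\sF$ and $(+a)^{\ast}\sF$ correspond to the same $G$-representation, so they are isomorphic on $A$. The compatibility check can be formalized cleanly using the contracted-product presentation $\sF \cong X \times^{G} M$ where $M = (g^{\ast}\sF)_{e_X}$, together with the identity $g \circ (+b) = (+a) \circ g$.
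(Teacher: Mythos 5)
Your proposal is correct and follows essentially the same route as the paper: produce a rational lift $b\in g^{-1}(a)(K)$ via Lemma \ref{lemma2}, use the cartesian square of Lemma \ref{coro3} to compare descent data for $\sF$ and $(+a)^*\sF$, and observe that because $\aut(X/A)$ acts by translations by elements of $\ker g$ (Lemma \ref{lemma1}) and $X$ is an abelian variety, $+b$ commutes with all of $\aut(X/A)$ — which is exactly the paper's statement that $(+a)^\sharp = \id$ on $\aut(X/A)$. The Galois-descent and contracted-product language is a mild reformulation of the paper's passage to the representation $\rho_{\sF}$, not a different proof.
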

\begin{proof}
By lemma \ref{lemma2}, we know that $g^{-1}(a)$ is a disjoint union of $m$ copies of $\spec(K)$. Let $b$ be a rational point in $g^{-1}(a)$. By lemma \ref{coro3}, 
the pull-back by $+a:A\longrightarrow A$ induces an automorphism 
\begin{equation*}
(+a)^\sharp:\aut(X/A)\longrightarrow\aut (X/A),\ \ \ \sigma\mapsto (-b)\circ \sigma\circ (+b).
\end{equation*}
Let $\rho_{\sF}:\aut(X/A)\longrightarrow \mathrm{GL}(\Lambda^r)$ be the representation corresponding to $\sF$. Then, $(+a)^*\sF$ corresponds to the representation $\rho_{\sF}\circ(+a)^{\sharp}$.
By lemma \ref{lemma1}, $\sigma=+x$ for some $x\in g^{-1}(e_A)\subset X(K)$. Since $(+a)^\sharp(+x)= (-b)\circ (+x)\circ (+b)=+x$, we deduce that $(+a)^\sharp$ is the identity. Thus, we have $(+a)^*\sF\cong\sF$.
\end{proof}

\begin{lemma}\label{lemma5}
Let $\Lambda$ be a commutative finite ring in which $p$ is invertible. Let $\sF$ be a locally constant and constructible sheaf of $\Lambda$-modules on $A$. Then, there exists a Galois \'etale covering $h:Y\longrightarrow A$ trivializing $\sF$ such that
\begin{itemize}
\item[(i)]
There is a Galois extension $K'/K$ such that $h:Y\longrightarrow A$ is the  composition of a map $h':Y\longrightarrow A_{K'}$ with the canonical projection $\pi':A_{K'}\longrightarrow A$;
\item[(ii)]
$h':Y\longrightarrow A_{K'}$ is a finite Galois \'etale isogeny of abelian varieties over $K'$.
\end{itemize}
\end{lemma}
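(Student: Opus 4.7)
The plan is to construct $Y$ and $K'/K$ directly from the Galois correspondence, using that the geometric \'etale fundamental group of $A$ is the product of its Tate modules, and then to verify the Galois property by a combinatorial argument tracing how the global cover $Y_0 \to A$ splits after base change.

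Set up the picture as follows. Let $\rho : \pi_1(A, \bar a) \longrightarrow \mathrm{GL}_r(\Lambda)$ be the representation corresponding to $\sF$, where $\pi_1$ denotes the \'etale fundamental group. Since $\Lambda$ is finite, $\rho$ has finite image $G$, and the Galois correspondence provides a Galois \'etale cover $h_0 : Y_0 \longrightarrow A$ over $K$ with group $G$ trivializing $\sF$. Set $H = \rho(\pi_1(A_{\bar K}, \bar a))$, a normal subgroup of $G$; the quotient $G/H$ is identified with a finite quotient of $\mathrm{Gal}(\bar K/K)$, which cuts out a finite Galois subextension $K'_0 \subset \bar K$ of $K$. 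Since $\pi_1(A_{\bar K}) \cong \prod_{\ell \neq p} T_\ell A$ is abelian of profinite order prime to $p$, so is $H$, and the corresponding subcover of $Y_0 \otimes_K \bar K$ geometrically realizes $H$ as the Galois group of an \'etale isogeny of abelian varieties over $\bar K$.

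Next, I would enlarge $K'_0$ to a finite Galois extension $K'/K$ satisfying in addition: (a) $Y_0 \otimes_K K'$ decomposes into its geometrically connected components; (b) some such component $Y \subset Y_0 \otimes_K K'$ carries a $K'$-rational point above $e_A$; and (c) the kernel of the resulting isogeny $h' : Y \longrightarrow A_{K'}$ is a constant group scheme over $K'$. With the chosen base point, $Y$ acquires the structure of an abelian variety over $K'$ and $h'$ becomes a finite \'etale isogeny with constant kernel isomorphic to $H$, which gives assertion (ii). Setting $h = \pi' \circ h'$, the sheaf $\sF$ is trivialized by $h$, since $h$ factors through $Y \hookrightarrow Y_0 \otimes_K K' \longrightarrow Y_0 \xrightarrow{h_0} A$ and $h_0$ itself trivializes $\sF$.

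The main obstacle will be to verify that $h : Y \longrightarrow A$ is Galois \'etale over $K$, i.e.\ that $\lvert \aut_K(Y/A) \rvert = \deg h = \lvert H \rvert \cdot [K' : K]$. I would analyze the natural homomorphism $\aut_K(Y/A) \longrightarrow \mathrm{Gal}(K'/K)$ induced by the action on the $K'$-structure of $Y$: its kernel is $\aut_{K'}(Y/A_{K'}) = (\ker h')(K') \cong H$ by (c). For the surjectivity, the key observation is that the group $G = \aut(Y_0/A)$ acts on $Y_0 \otimes_K K'$ by $K'$-linear automorphisms commuting with the $\mathrm{Gal}(K'/K)$-action on the second factor, and permutes the set of components transitively, since the corresponding $G$-set identifies with the coset space $G/H$ under left translation. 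Hence for any $\tau \in \mathrm{Gal}(K'/K)$, the component $\tau(Y)$ equals $g \cdot Y$ for some $g \in G$, and the composite $g^{-1} \circ \tau$ restricts to a $K$-automorphism of $Y$ over $A$ whose effect on $K'$ is $\tau$. This yields the surjection and hence the required count $\lvert \aut_K(Y/A) \rvert = \lvert H \rvert \cdot [K' : K]$.
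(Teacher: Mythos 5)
Your construction is essentially the one in the paper: start from a Galois \'etale cover $Z\to A$ trivializing $\sF$, pass to a finite Galois extension $K'/K$ over which the fiber above $e_A$ acquires a rational point, and let $Y$ be the connected component of $Z_{K'}$ through that point. The paper states the conclusion without justifying that $h:Y\to A$ is Galois over $K$; your extra paragraph, using the exact sequence $1\to\aut_{K'}(Y/A_{K'})\to\aut_K(Y/A)\to\mathrm{Gal}(K'/K)\to 1$ together with the transitive $G$-action on the set of components, supplies exactly the missing verification and is correct. (Note that the paper's single hypothesis on $K'$ already forces your (a) and (c): the fiber of $Z\to A$ over $e_A$ is a $G$-torsor, and a torsor with a rational point is the trivial torsor $G$, so all components of $Z_{K'}$ have $K'$-points over $e_{A_{K'}}$ and the kernel of $h'$ is automatically constant.) One small inaccuracy: $\pi_1(A_{\bar K})\cong\prod_\ell T_\ell A$ including $\ell=p$, so its profinite order is not prime to $p$ in general (the $p$-part is $T_pA\cong\bZ_p^{r}$ with $r$ the $p$-rank); only the abelianness is used, so this does not affect your argument.
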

\begin{proof}
Let $Z\longrightarrow A$ be a finite Galois \'etale covering trivializing 
$\sF$. Let $K'$ be a finite Galois extension of $K$ such that the image of $Z_{K'}(K')\longrightarrow A_{K'}(K')$ contains the origin $e_{A_{K'}}$ of $A_{K'}$. Let $Y$ be a connected component of $Z_{K'}$ containing  a pre-image $e^{\prime}$ of $e_{A_{K'}}$ in $Z_{K'}(K')$. The induced morphism $h:Y\longrightarrow A$ satisfies the conditions of lemma \ref{lemma5}.
\end{proof}

\begin{proposition}\label{invariant}
Let $\Lambda$ be a commutative finite ring in which $p$ is invertible. Let $\sF$ be a locally constant and constructible sheaf of $\Lambda$-modules on $A$. Then, there exists a positive integer $m(\sF)$ depending on $\sF$ such that, for any positive integer $n$ co-prime to $m(\sF)$ and for any $a\in A[n](K)$, we have $(+a)^*\sF\cong \sF$.
\end{proposition}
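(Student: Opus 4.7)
My plan is to reduce the proposition to Corollary~\ref{coro4} by passing to a finite Galois extension of $K$ over which the trivializing cover becomes an \'etale isogeny of abelian varieties, applying Corollary~\ref{coro4} there, and then descending the resulting isomorphism.

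First, I would apply Lemma~\ref{lemma5} to $\sF$, obtaining a finite Galois extension $K'/K$, a Galois \'etale covering $h:Y\longrightarrow A$ trivializing $\sF$, and a factorization $h = \pi' \circ h'$, where $\pi':A_{K'}\longrightarrow A$ is the canonical projection and $h':Y\longrightarrow A_{K'}$ is a finite Galois \'etale isogeny of abelian varieties over $K'$. Set $m' = \deg(h')$ and
\[
m(\sF) := m' \cdot [K':K].
\]
Given a positive integer $n$ coprime to $m(\sF)$ and a point $a \in A[n](K)$, we then have $(n,m')=1$ and $a_{K'}\in A_{K'}[n](K')$.

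Applying Corollary~\ref{coro4} over $K'$ to the Galois \'etale isogeny $h'$ and to the sheaf $\pi'^*\sF$ (which is still trivialized by $h'$), we obtain an isomorphism
\[
(+a_{K'})^*\pi'^*\sF \cong \pi'^*\sF \quad \text{on } A_{K'}.
\]
Since $+a:A\longrightarrow A$ is defined over $K$, we have $\pi' \circ (+a_{K'}) = (+a) \circ \pi'$, and this rewrites as $\pi'^*((+a)^*\sF) \cong \pi'^*\sF$.

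The remaining step would be to descend this isomorphism from $A_{K'}$ to $A$. For this I plan to exploit the canonicity of the isomorphism provided by Corollary~\ref{coro4}. Tracing through its proof, the induced automorphism $(+a_{K'})^\sharp$ of $\aut(Y/A_{K'}) \cong \ker h'$ is independent of the choice of preimage $b \in h'^{-1}(a_{K'})(K')$, because conjugation in the abelian group $\ker h'$ is trivial. Since $\mathrm{Gal}(K'/K)$ permutes such preimages while fixing $a_{K'}$, this independence forces the isomorphism to be $\mathrm{Gal}(K'/K)$-equivariant. By \'etale Galois descent along the finite \'etale Galois cover $\pi'$, the isomorphism then descends to the desired $(+a)^*\sF \cong \sF$ on $A$.

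The hard part will be this final descent step. The extension to $K'$ and the application of Corollary~\ref{coro4} are immediate, but verifying that the resulting isomorphism admits a $\mathrm{Gal}(K'/K)$-equivariant structure requires a careful inspection of its construction, and relies crucially on the abelianness of $\ker h'$ to rule out any non-canonical choices entering the definition of the isomorphism.
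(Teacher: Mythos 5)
Your reduction to Lemma~\ref{lemma5} and Corollary~\ref{coro4} over $K'$ is the right starting point, and the first half of your argument matches the paper's setup. But the descent step is a genuine gap, and your choice $m(\sF)=m'\cdot[K':K]$ is not the right integer.

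To see the problem, translate into representation theory. Write $G=\aut(Y/A)$ and $H=\aut(Y/A_{K'})$, so $H\trianglelefteq G$ and $G/H\cong\gal(K'/K)$. The sheaf $\sF$ corresponds to a representation $\rho$ of $G$, and $\pi'^*\sF$ to $\rho|_H$. What Corollary~\ref{coro4} gives you is that $(+a_{K'})^\sharp=\id_H$, hence the \emph{identity} is an intertwiner $\rho|_H=(\rho\circ(+a)^\sharp)|_H$. To descend you would need this identity map to respect the two $\gal(K'/K)$-structures, i.e.\ the two extensions of $\rho|_H$ to $G$, namely $\rho$ and $\rho\circ(+a)^\sharp$ themselves. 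That is exactly the conclusion you want, so the appeal to ``canonicity'' is circular. And the abstract group-theoretic statement implicitly used — that an automorphism of $G$ restricting to the identity on a normal subgroup $H$ of index coprime to $n$ must fix isomorphism classes of $G$-representations — is false: take $G=\bZ/4$, $H=2\bZ/4$, and $\phi(x)=-x$, which fixes $H$ pointwise but sends a faithful character to its inverse. Nothing in your bound $m(\sF)=m'\cdot[K':K]$ excludes this phenomenon.

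The paper circumvents descent entirely. It works directly with $G=\aut(Y/A)$ and shows that $(+a)^\sharp$ is an \emph{inner} automorphism of $G$, which suffices because conjugate representations of a finite group are isomorphic. Concretely, with $b$ a preimage of $a$ over $K'$: since $nb\in\ker h'$, Lemma~\ref{lemma1} gives $+nb\in\aut(Y/A_{K'})\subset G$, so $\left((+a)^\sharp\right)^n=\mathrm{Ad}(+nb)$ is inner; and $(+a)^\sharp$ has order dividing $m''=\sharp\aut\bigl(\aut(Y/A)\bigr)$. Taking $m(\sF)=m'\cdot m''$, coprimality of $n$ to $m''$ (not to $[K':K]$) kills the outer part: choosing $u,v$ with $un+vm''=1$, one gets $(+a)^\sharp=\bigl((+a)^\sharp\bigr)^{un}\in\mathrm{Inn}(G)$. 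The factor controlling the order of $(+a)^\sharp$ inside $\aut(G)$ is the ingredient missing from your proposal, and it cannot be replaced by $[K':K]=|G/H|$.
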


\begin{proof}
Let $h:Y\xrightarrow{h'} A_{K'}\xrightarrow{\pi'}A$ be a finite Galois \'etale covering trivializing $\sF$ as  in lemma \ref{lemma5}.  We put $m'=[Y:A_{K'}]$. We denote by $m''$ the cardinality of $\aut(\aut(Y/A))$. We put $m(\sF)=m'm''$. Let  $n$ be a positive integer co-prime to $m(\sF)$. Pick $a\in A[n](K)$. We abusively denote by $a$ its pull-back in $A_{K'}[n](K')$. Since $n$ is co-prime to $m'$, lemma \ref{lemma2} ensures that $h'^{-1}(a)$ is a disjoint union of copies of $\spec(K')$. Pick $b\in h'^{-1}(a)$. From lemma \ref{coro3}, we have the following cartesian diagrams
\begin{equation*}
\xymatrix{\relax
Y\ar[d]_{h'}\ar[r]^-{+b}\ar@{}|-{\Box}[rd]&Y\ar[d]^{h'}\\
A_{K'}\ar[d]_{\pi'}\ar[r]^-{+a}\ar@{}|-{\Box}[rd]&A_{K'}\ar[d]^{\pi'}\\
A\ar[r]^-{+a}&A}
\end{equation*} 
The pull-back by $+a:A\longrightarrow A$ induces an automorphism 
\begin{equation*}
(+a)^\sharp:\aut(Y/A)\longrightarrow\aut (Y/A),\ \ \ \sigma\mapsto (-b)\circ \sigma\circ (+b).
\end{equation*}
On the one hand, since $na=e_A$, we have $nb\in (\ker h')(K')$. From lemma \ref{lemma1}, we deduce
\begin{equation*}
+nb\in \aut(Y/A_{K'})\subset\aut(Y/A).
\end{equation*}
 Hence
\begin{equation*}
\left((+a)^\sharp\right)^n=\mathrm{Ad}(+nb)\in\mathrm{Inn}(\aut(Y/A)).
\end{equation*}
On the other hand, since $(+a)^\sharp\in \aut(\aut(Y/A))$, we get 
\begin{equation*}
\left((+a)^{\sharp}\right)^{m''}=\mathrm{id}_{\aut(Y/A)}.
\end{equation*}
 Since $(n,m'')=1$, we deduce that 
\begin{equation*}
(+a)^\sharp \in \mathrm{Inn}(\aut(Y/A)).
\end{equation*}
Hence, the representations of $\aut(Y/A)$ induced by $\sF$ and that of $(+a)^*\sF$ are isomorphic. Thus, we have $(+a)^*\sF\cong \sF$.
\end{proof}

\subsection{}\label{dense}
Let $S$ be a strict henselian trait with an algebraically closed residue field of charateristic $p>0$. We denote by $s$ its closed point, by $\eta$ its generic point and by $\overline\eta$ a geometric point above $\eta$.  Let $\cA$ be an abelian scheme over $S$ of relative dimension $g>0$. Let $n$ be a positive integer co-prime to $p$. The $S$-group scheme $\cA[n]$ is finite \'etale over $S$. Since $S$ is a strictly henselian trait and since $\cA_s[n]$ is a constant group scheme over $s$ of group $(\mathbb Z/n\mathbb Z)^{2g}$, the $S$-group scheme $\cA[n]$ is a constant group scheme over $S$ of group $(\mathbb Z/n\mathbb Z)^{2g}$. Restricting to the generic fiber, $\cA_\eta[n]$ is a constant group scheme of $(\mathbb Z/n\mathbb Z)^{2g}$ over $\eta$. 
There are canonical bijections between the three sets $\cA_{\eta}[n](\eta)$,  $\cA_{s}[n](s)$ and $\cA[n](S)$. Notice that the set of closed points 
\begin{equation*}
\bigcup_{i\geq 1}\cA_{\overline\eta}[n^i](\overline\eta)\subset \cA_{\overline\eta}(\overline\eta)
\end{equation*}
is dense in $\cA_{\overline\eta}$. Hence, the union of rational points 
\begin{equation*}
\bigcup_{i\geq 1}\cA_{\eta}[n^i](\eta)\subset \cA_{\eta}(\eta)
\end{equation*}
is also dense in $\cA_{\eta}$.

\subsection{}\label{settingACF}
Let $k$ be an algebraically closed field of characteristic $p>0$, $C$ a connected smooth $k$-curve, $c$ a closed point of $C$, $\jmath:V=C-\{c\}\longrightarrow C$ the canonical injection. Let $S$ be the henselization of $C$ at $c$ and let $\eta$ be the generic point of $S$. Let $\cA$ be an abelian scheme over $C$ of relative dimension $g>0$. We have the following commutative diagrams
\begin{equation*}
\xymatrix{\relax
U\ar[d]_{f_V}\ar[r]^j\ar@{}|{\Box}[rd]&\cA\ar[d]^{f}\ar@{}|{\Box}[rd]&\cA_c\ar[l]\ar[d]^{f_c}& &\cA_\eta\ar[d]\ar[r]\ar@{}|{\Box}[rd]&\cA_S\ar[d]\ar@{}|{\Box}[rd]&\cA_{ c}\ar[d]\ar[l]\\
V\ar[r]_{\jmath}&C&c\ar[l]& &\eta\ar[r]&S& c\ar[l]}
\end{equation*}
Let $n$ be a positive integer co-prime to $p$. For an element $a\in \cA_c[n](c)$, we denote by $a_S$ the corresponding element in 
$\cA_S[n](S)$.  Then, there exists an \'etale neighborhood $\pi:C'\longrightarrow C$ of $c$ such that 
The $S$-point $a_S:S\longrightarrow \cA_S$  descends to a $C^{\prime}$-point $a_{C'}:C'\longrightarrow \cA_{C'}$. 
Notice that $\bT^*\cA_{C'}=\bT^*\cA\times_\cA\cA_{C'}$. The $C'$-point $a_{C'}$ induces 
an isomorphism 
$$d(+a):\bT^*\cA\times_{\cA}\cA_c=\bT^*\cA_{C'}\times_{\cA_{C'}}\cA_c\xrightarrow{d(+a_{C'})} \bT^*\cA_{C'}\times_{\cA_{C'}}\cA_c=\bT^*\cA\times_{\cA}\cA_c,$$ 
which is independent of the choice of $a_{C'}$.
For any closed point $x\in \cA_c$, it induces an isomorphism of fibers $d(+a)_x:\bT^*_x\cA\to \bT^*_{x-a}\cA$.

Let $\Lambda$ be a finite field of characteristic $\ell\neq p$ and let $\sF$ be a non-trivial locally constant and constructible sheaf of $\Lambda$-modules on $U$.

\begin{theorem}\label{mainprop}
We use the notations and assumptions of \ref{settingACF}. Then, 
\begin{itemize}
\item[(i)]
The ramification of $\sF$ along the divisor $\cA_c$ of $\cA$ is non-degenerate. In particular, for any closed point $x\in \cA_c$, the fiber of the singular support  $SS(j_!\sF)\longrightarrow \mathcal A$ above $x$ is a union of lines in $\mathbb \bT^*_x\mathcal A$. 
\item[(ii)]
For a closed point $x\in\cA_c$, we put $E_x=SS(j_!\sF)\times_{\cA}x$. Then, there exists a positive integer $r$ depending on $\sF$ such that, for any closed point $x\in\cA_c$ and any $a\in \cA_c[n](c)$ with $(n,rp)=1$, we have 
\begin{equation}\label{d+a=x-a}
d(+a)_x(E_x)=E_{x-a}
\end{equation}
In particular, assuming that $f:\cA\longrightarrow C$ is a trivial family of abelian varieties, i.e., $\cA\cong \cA_c\times_k C$, we have 
\begin{equation}\label{SS=E*Ac}
SS(j_!\sF)=\bT^*_{\cA}\cA\bigcup (E_0\times_k\cA_c),
\end{equation}
where $0$ denotes the origin of $\cA_c$ and $E_0\times_k\cA_c$ is considered as a closed conoical subset of $\bT^*\cA\times_\cA\cA_c\subseteq \bT^*\cA$ by the isomorphism $\bT^*\cA\times_\cA\cA_c\cong \bT^*_0\cA\times_k\cA_c$.
\end{itemize}
\end{theorem}

\begin{proof}
By proposition \ref{invariant}, we can find an integer $r$ depending on $\sF|_{\cA_\eta}$, such that, for any positive integer $n$ co-prime to $r$ and any rational point $a_\eta\in \cA_{\eta}[n](\eta)$, there is an isomorphism $(+a_\eta)^*(\sF|_{\cA_\eta})\cong\sF|_{\cA_\eta}$.  Pick $a_{\eta}\in \bigcup_{(n,rp)=1}\cA_{\eta}[n](\eta)$ and denote by $a_S$ and $a$ the corresponding elements in $\bigcup_{(n,rp)=1}\cA_{S}[n^i](S)$ and $\bigcup_{(n,rp)=1}\cA_{c}[n^i](c)$ respectively (\ref{dense}). Then, there exists an \'etale neighborhood $\pi:C'\longrightarrow C$ of $c$ such that 
\begin{itemize}
\item[(1)]
The $S$-point $a_S:S\longrightarrow \cA_S$  descends to a $C^{\prime}$-point 
\begin{equation}
a_{C'}:C'\longrightarrow \cA_{C'};
\end{equation}
\item[(2)]
The isomorphism $(+a_\eta)^*(\sF|_{\cA_\eta})\cong \sF|_{\cA_\eta}$  descends to an isomorphism 
\begin{equation}\label{+aC}
(+a_{C'})^*(j_!\sF|_{\cA_{C'}})\cong j_!\sF|_{\cA_{C'}}
\end{equation}
\end{itemize}

(i). Let $W$ be a dense open subset of $\cA_c$ along which the ramification of $\sF$  is non-degenerate (\cite[Lemma 3.2]{wr}). Since the non-degeneracy property is \'etale local, the non-degenerate loci of $\sF|_{\cA_{V'}}$, where $V'=C'-\{c\}$, and 
$\sF$ along $\cA_c$ are the same. Hence, the isomorphism \eqref{+aC} implies that the ramification of $\sF$ is also non-degenerate along $W+a\subset \cA_c$. Since $\mathcal X=\bigcup_{(n,rp)=1}\cA_{c}[n](c)$ is  dense in $\cA_c$ (\ref{dense}), we have 
\begin{equation*}
\cA_c=\bigcup_{a\in \mathcal X}W+a.
\end{equation*}
Hence, the ramification of $\sF$ along $\cA_c$ is non-degenerate. We finish the proof of (i).

(ii). The isomorphism \eqref{+aC} implies that 
\begin{align}
(+a_{C'})^\circ(SS(j_!\sF|_{\cA_{C'}}))&=SS(j_!\sF|_{\cA_{C'}})\label{+a^circSS=SS}\\
d(+a)(SS(j_!\sF|_{\cA_{C'}})\times_{\cA}\cA_c)&=SS(j_!\sF|_{\cA_{C'}})\times_{\cA}\cA_c\nonumber
\end{align}
 Taking the fiber of $SS(j_!\sF)\longrightarrow\cA$ at a closed point $x\in\cA_c$, \eqref{+a^circSS=SS} implies that
 \begin{equation}
d(+a)_x(E_x)=E_{x-a},
\end{equation}
for any $a\in\mathcal X$. The equality \eqref{d+a=x-a} is proved.

 Assume that $\cA\cong \cA_c\times_kC$ is a trivial family. Since $SS(j_!\sF)$ is a closed conical subset of $\bT^*\cA$, for any $x\in\cA_c(c)$, we have 
 \begin{equation}
\ol{\bigcup_{a\in \mathcal X}d(+a)_x(E_x)}=\ol{\bigcup_{a\in \mathcal X}E_{x-a}}\subseteq SS(j_!\sF).
 \end{equation}
 For any $x\in \cA_c(c)$, we denote by $x_C\in\cA(C)$ its lift by the base change. Since $\mathcal X$ is dense in $\cA_c$, we have
 \begin{align*}
 d(+x_C)_0(E_0)\subseteq \ol{\bigcup_{a\in \mathcal X}d(+a)_0(E_0)}\subseteq SS(j_!\sF)\\
 d(-x_C)_0(E_x)\subseteq \ol{\bigcup_{a\in \mathcal X}d(+a)_x(E_x)}\subseteq SS(j_!\sF)
 \end{align*}
 for any $x\in \cA_c(c)$. Hence, 
 \begin{equation}
 d(+x_C)_0(E_0)\subseteq E_{-x},\ \ \ \textrm{and}\ \ \ d(-x_C)_0(E_x)\subseteq E_{0},
 \end{equation}
 for any $x\in \cA_c(c)$. Therefore, $d(+x_C)_0(E_0)=E_{-x}$ for any $x\in \cA_c(c)$. Hence, we have 
 \begin{equation}
 \ol{\bigcup_{x\in \cA_c(c)}d(+x_C)_0(E_0)}=\ol{\bigcup_{x\in \cA_c(c)}E_{-x}}=SS(j_!\sF)\times_{\cA}\cA_c
 \end{equation}
Notice that $\bT^*\cA\times_\cA\cA_c\cong \bT^*_0\cA\times_k \cA_c$ is a trivial vector bundle. We have 
\begin{equation}
SS(j_!\sF)\times_{\cA}\cA_c= \ol{\bigcup_{x\in \cA_c(c)}d(+x_C)_0(E_0)}=E_0\times_k\cA_c.
 \end{equation}
 We obtain \eqref{SS=E*Ac}.
   \end{proof}

\begin{corollary}\label{l-adicNP}
We use the notations and assumptions from \ref{settingACF}. Let  $x$ be a closed point in $\cA_c$. Let $T$ be a 
smooth $k$-curve  in $\cA$ meeting the divisor $\cA_c$ transversely at $x$ and such that the immersion $T\longrightarrow \cA$ is $SS(j_!\sF)$-transversal at $x$. Then, the slope decomposition of $\sF|_T$ at $x$ and that of $\sF$ at the generic point of $\cA_c$ have the same Newton polygon (\ref{invM}) .
\end{corollary}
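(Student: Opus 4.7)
The plan is to reduce to the non-degenerate isoclinic case via Theorem \ref{mainprop}, and then compare slopes piece by piece, combining the slope bounds from Proposition \ref{slope'} with the compatibility of characteristic cycles under properly transversal pullback (Theorem \ref{cc7.6}). Below, $T_0$ stands for $T\setminus(T\cap\cA_c)$; after shrinking $T$ around $x$, we may assume $T_0=T\setminus\{x\}$ and that $T$ meets $\cA_c$ only at $x$.

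By Theorem \ref{mainprop}, the ramification of $\sF$ along $\cA_c$ is non-degenerate. Hence, \'etale locally around $x$, we may decompose $\sF=\bigoplus_{\alpha\in I}\sF_\alpha$, where each $\sF_\alpha$ is a locally constant and constructible sheaf of $\Lambda$-modules on $U$, isoclinic of slope $r_\alpha\geq 1$ at the generic point of $\cA_c$ and with non-degenerate ramification along $\cA_c$. Writing $M_\alpha$ (resp.\ $M$) for the Galois module associated with $\sF_\alpha$ (resp.\ $\sF$) at the generic point of $\cA_c$, the slope decomposition $M=\bigoplus_r M^{(r)}$ is recovered by $M^{(r)}=\bigoplus_{\alpha:\,r_\alpha=r}M_\alpha$. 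Consequently, the Newton polygon of $M$ is encoded by the multiset $\{(r_\alpha,\rk_\Lambda\sF_\alpha)\}_{\alpha\in I}$. Since $\sF|_{T_0}=\bigoplus_\alpha \sF_\alpha|_{T_0}$, it is enough to prove that each $\sF_\alpha|_{T_0}$ is isoclinic at $x$ of slope $r_\alpha$ and rank $\rk_\Lambda\sF_\alpha$.

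Fix $\alpha$. Since $SS(j_!\sF_\alpha)\subseteq SS(j_!\sF)$, the immersion $i:T\longrightarrow\cA$ is $SS(j_!\sF_\alpha)$-transversal at $x$; the non-degeneracy of the ramification of $\sF_\alpha$ along $\cA_c$ ensures that $i$ is in fact properly $SS(j_!\sF_\alpha)$-transversal. Proposition \ref{slope'} (1) then gives that the largest slope of $\sF_\alpha|_{T_0}$ at $x$ equals $r_\alpha$, and Proposition \ref{slope'} (2) that all its slopes are at most $r_\alpha$. To upgrade this to full isoclinicity, I compute $\dt_x(\sF_\alpha|_{T_0})$: by Theorem \ref{cc7.6}, $i^!CC(j_!\sF_\alpha)=CC(i^*j_!\sF_\alpha)$. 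By Example \ref{CCcurve}, the coefficient of $[\bT_x^*T]$ on the right equals $-\dt_x(\sF_\alpha|_{T_0})$. On the left, formula \eqref{CCnondeg} presents $CC(j_!\sF_\alpha)$ as a combination of the zero section and either $[\bT_{\cA_c}^*\cA]$ (when $r_\alpha=1$) or of the line components $[\overline L_\chi]$ (when $r_\alpha>1$). Pulling back via $i^!$: the zero section contributes $(-1)^{\dim\cA-1}[\bT^*_TT]$, while each $[\overline L_\chi]$ contributes $(-1)^{\dim\cA-1}[F_\chi:F]\cdot[\bT_x^*T]$, because $di_x$ is injective on each of the $[F_\chi:F]$ lines in the fiber of $\overline L_\chi$ over $x$ by $SS$-transversality; the tame case is handled similarly. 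Summing over $\chi$ yields the coefficient $\sum_\chi\frac{r_\alpha\dim_\Lambda M_{\alpha,\chi}^{(r_\alpha)}}{[F_\chi:F]}\cdot[F_\chi:F]=r_\alpha\rk_\Lambda\sF_\alpha$, whence $\dt_x(\sF_\alpha|_{T_0})=r_\alpha\rk_\Lambda\sF_\alpha$. Combined with the slope bound, the equality case of $\sum_s s\,\dim N^{(s)}\leq r_\alpha\sum_s\dim N^{(s)}$ forces each slope of $\sF_\alpha|_{T_0}$ at $x$ to equal $r_\alpha$.

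The main obstacle is the total dimension computation: although conceptually a direct application of Theorem \ref{cc7.6} and formula \eqref{CCnondeg}, it requires carefully tracking how $i^!$ acts on each line component $[\overline L_\chi]$ and absorbing the Galois conjugates through the factor $[F_\chi:F]$. The non-degeneracy afforded by Theorem \ref{mainprop} is essential here, as it guarantees both the local isoclinic decomposition of $\sF$ near $x$ and the good behavior of the fibers of the characteristic subsets over $x$ needed to evaluate the pullback cleanly.
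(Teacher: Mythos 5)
Your proposal is correct, and it follows the same overall skeleton as the paper (apply Theorem \ref{mainprop} to get non-degeneracy, then decompose $\sF$ \'etale-locally near $x$ into isoclinic non-degenerate summands $\sF_\alpha$ and handle each summand). The paper finishes in one line by citing \cite[Prop.~2.22]{wr}, which directly gives that the restriction $\sF_\alpha|_{T_0}$ at $x$ is isoclinic with the same slope as $\sF_\alpha$. You instead take only the largest-slope statement from Proposition \ref{slope'}(1) (which itself rests on \cite[Prop.~2.22.2]{wr}) and then \emph{re-derive} the full isoclinicity by a characteristic-cycle computation: you combine Theorem \ref{cc7.6} with the non-degenerate formula \eqref{CCnondeg} and Example \ref{CCcurve} to show $\dt_x(\sF_\alpha|_{T_0})=r_\alpha\cdot\rk_\Lambda\sF_\alpha$, and then the slope bound forces all slopes to be $r_\alpha$.

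This is a genuinely different closing step. It buys you an argument that stays mostly inside the characteristic-cycle machinery developed in the paper, rather than importing the full strength of \cite[Prop.~2.22]{wr}. The cost is that the pullback $i^!$ of the line components $[\ol L_\chi]$ must be evaluated carefully: the equality $di_*\pr_2^*[\ol L_\chi]=[F_\chi:F]\cdot[\bT^*_xT]$ is not a triviality. You correctly observe that $SS$-transversality makes $di_x$ an isomorphism on each line of the fiber of $\ol L_\chi$ over $x$, but the statement that the fiber cycle $\pr_2^*[\ol L_\chi]$ has total length exactly $[F_\chi:F]$ (rather than some intersection multiplicity exceeding the generic degree of $\ol L_\chi\to\cA_c$) is precisely where the non-degeneracy from Theorem \ref{mainprop} is being used a second time; it would be worth stating this explicitly, since it is the technical heart of your alternative argument. (Also, a cosmetic remark: since $M_\alpha$ is isoclinic of slope $r_\alpha$, the notation $M^{(r_\alpha)}_{\alpha,\chi}$ should simply be $M_{\alpha,\chi}$, and the invocation of Proposition \ref{slope'}(2) is redundant once (1) has fixed the largest slope.)
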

\begin{proof}
From Theorem \ref{mainprop}, the ramification of $\sF$ along $\cA_c$ is non-degenerate around $x$.  By \cite[Lemma 3.1]{wr}, after replacing $\cA$ by a sufficiently small \'etale neighborhood $U$ of $x$ in $\cA$, the sheaf $\sF$ decomposed as a direct sum of locally constant and constructible sheaves $\{\sF_i\}_{i\in I}$ on $U$ such that the ramification of each $\sF_i$ along $\cA_c$ is non-degenerate and isoclinic. Notice that $T\longrightarrow \cA$ is also $SS(j_!\sF_i)$-transversal for each $i\in I$. By \cite[Prop. 2.22]{wr}, the ramification of $\sF_i|_T$ at $x$ is also isoclinic at $x$ with the same slope as $\sF_i$. Hence, the slope decomposition of $\sF|_T$ at $x$ and the slope decomposition of $\sF$ at the generic point of $\cA_c$ have the same Newton polygon. 
\end{proof}

In \cite{Tsuzuki}, Tsuzuki showed the constancy of Newton polygons for convergent $F$-isocrystals on  abelian varieties over a finite field. Corollary \ref{l-adicNP} offers a similar constancy of Newton polygons for the ramification of locally constant and constructible \'etale sheaves on abelian schemes over a curve  ramified along a vertical divisor.

\begin{proposition}\label{saitoapplication}
We use the notations and assumptions of \ref{settingACF}. Then, 
\begin{itemize}
\item[(i)]
For any irreducible component $T$ of $SS(j_!\sF)$ different from $\mathbb T^*_{\cA_c}\cA$, we have 
$$T\bigcap \mathbb T^*_{\cA_c}\cA\subset \mathbb T^*_{\cA}\cA.$$
\item[(ii)]
We have the following conductor formula 
\begin{equation}\label{condform=0}
\sum_{i=0}^{2g}(-1)^i\mathrm{sw}_c(R^if_{V*}\sF)=0.
\end{equation}
\end{itemize}
\end{proposition}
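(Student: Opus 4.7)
The plan is to deduce (i) from Theorem \ref{mainprop} together with the explicit form of the characteristic cycle under non-degenerate ramification, and then to derive (ii) from (i) by a direct component-by-component computation with Saito's conductor formula.

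For (i), Theorem \ref{mainprop} provides non-degeneracy of the ramification of $\sF$ along $\cA_c$ at every closed point, so the characteristic cycle formula \eqref{CCnondeg} describes the irreducible components of $SS(j_!\sF)$ as the zero section $\bT^*_\cA\cA$, the conormal $\bT^*_{\cA_c}\cA$, and the closures $\overline{L_\chi}$ of the lines determined by the non-logarithmic characteristic forms. The case $T = \bT^*_\cA\cA$ is trivial. For $T = \overline{L_\chi}$, the discussion following \eqref{CCnondeg} asserts that at every closed point $x \in \cA_c$ the fiber of $SS(j_!\sF)$ is a finite union of distinct $1$-dimensional subspaces of $\bT^*_x\cA$ corresponding to the distinct components of the CC, so $(\overline{L_\chi})_x$ is a line different from the conormal line $\bT^*_x\cA_c$. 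Two distinct $1$-dimensional subspaces of $\bT^*_x\cA$ meet only at $0$, hence $T_x \cap (\bT^*_{\cA_c}\cA)_x \subset \{0\}$ and therefore $T \cap \bT^*_{\cA_c}\cA \subset \bT^*_\cA\cA$. The main subtle point here is that this transversality must hold at every closed point rather than merely on a dense open of $\cA_c$; this is precisely what Theorem \ref{mainprop} delivers, by propagating non-degeneracy from a dense open locus to all of $\cA_c$ via translations by torsion sections.

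For (ii), I apply Theorem \ref{conductor} to $j_!\sF$ and $f : \cA \longrightarrow C$. The morphism $f$ is proper (since $\cA$ is an abelian scheme over $C$) and smooth over $V$, hence properly $SS(j_!\sF)$-transversal over $V$ because $SS(j_!\sF)|_U = \bT^*_UU$. Proper base change gives $(R^if_*j_!\sF)_c = 0$ (as $j_!\sF$ vanishes on $\cA_c$) and $(R^if_*j_!\sF)_{\bar\eta} = H^i(\cA_{\bar\eta},\sF|_{\cA_{\bar\eta}})$. Since $\cA_{\bar\eta}$ is a proper smooth abelian variety and $\sF|_{\cA_{\bar\eta}}$ is lisse, the Euler-Poincar\'e characteristic $\chi(\cA_{\bar\eta},\sF|_{\cA_{\bar\eta}}) = \rk_{\Lambda}\sF \cdot \chi(\cA_{\bar\eta})$ vanishes. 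Theorem \ref{conductor} therefore yields
\begin{equation*}
(CC(j_!\sF),[df])_{\bT^*\cA,\cA_c} = -\sum_{i=0}^{2g}(-1)^i\sw_c(R^if_{V*}\sF),
\end{equation*}
so (ii) is equivalent to the vanishing of the left-hand intersection number.

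To establish this vanishing, I compute the contribution of each irreducible component of $SS(j_!\sF)$ separately. The intersection $([\bT^*_\cA\cA],[df])_{\cA_c}$ is zero because $f$ is smooth, so $df$ is nowhere vanishing on $\cA$. The intersection $([\bT^*_{\cA_c}\cA],[df])_{\cA_c}$ is zero by applying the conductor formula to the constant sheaf $\Lambda$ on $U$: here $CC(j_!\Lambda) = (-1)^{g+1}([\bT^*_\cA\cA] + [\bT^*_{\cA_c}\cA])$, the Swan conductors of $R^if_{V*}\Lambda$ vanish by smooth and proper base change, and $\chi(\cA_{\bar\eta}) = 0$. Finally, for every other irreducible component $T$ of $SS(j_!\sF)$, part (i) ensures that $T_x \cap \bT^*_x\cA_c \subset \{0\}$ at every closed point $x \in \cA_c$, whereas $df|_x = (f^*dt)|_x$ is a non-zero element of the conormal line $\bT^*_x\cA_c$; therefore $df|_x \notin T_x$ and $T$ does not meet the graph of $df$ above $\cA_c$, so $([T],[df])_{\cA_c} = 0$. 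Summing the contributions proves $(CC(j_!\sF),[df])_{\bT^*\cA,\cA_c} = 0$, whence (ii).
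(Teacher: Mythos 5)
Your treatment of part (ii) is essentially the paper's: invoke Theorem \ref{conductor}, note $(R^if_*j_!\sF)_c = 0$ by proper base change and $\sum_i(-1)^i\dim H^i(\cA_{\bar\eta},\sF|_{\cA_{\bar\eta}}) = \rk_\Lambda\sF\cdot\chi(\cA_{\bar\eta}) = 0$, and then show $(CC(j_!\sF),[df])_{\bT^*\cA,\cA_c} = 0$ component by component, using (i) for the components other than the zero section and the conormal. That part is fine; in fact you spell out a couple of vanishings (for the constant sheaf, and for the Euler characteristic) that the paper leaves implicit.

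Your proof of part (i) has a genuine gap. You claim that the remark following \eqref{CCnondeg}, combined with Theorem \ref{mainprop}, ``asserts that at every closed point $x\in\cA_c$ the fiber of $SS(j_!\sF)$ is a finite union of \emph{distinct} $1$-dimensional subspaces of $\bT^*_x\cA$ corresponding to the \emph{distinct} components of the CC.'' But the paper's statement is weaker: it only says the fiber is a finite union of lines, with no claim that they remain pairwise distinct at every closed point, and no claim of a bijection with irreducible components. Under non-degenerate ramification, $\overline{L_\chi}$ and $\bT^*_{\cA_c}\cA$ are sub-line-bundles of $\bT^*\cA|_{\cA_c}$ over $\cA_c$, i.e. two sections of the same projective bundle over a positive-dimensional base; two such sections can perfectly well cross along a proper closed subset of $\cA_c$. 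Non-degeneracy guarantees each component is a line bundle, but by itself does not forbid such crossings, and this is exactly the phenomenon (i) has to rule out. If non-degeneracy alone were enough, the paper would not need a separate argument for (i). What actually kills the crossing in the paper is the group structure: Proposition \ref{invariant} makes $SS(j_!\sF)\times_{\cA}\cA_c$ invariant under $d(+a_{C'})$ for a dense set of torsion sections $a_c$, and $d(+a_{C'})$ carries the conormal line $\bT^*_{\cA_c,x}\cA$ to $\bT^*_{\cA_c,x-a_c}\cA$; if the union $B$ of the ``extra'' components of $SS(j_!\sF)$ contained a single non-zero conormal vector, the orbit argument and the density of torsion points would force $\bT^*_{\cA_c}\cA\subset B$, contradicting the fact that the components of $B$ differ from $\bT^*_{\cA_c}\cA$. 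That translation-and-density step is the missing idea in your argument.
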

\begin{proof}
(i) Let $B$ be  the union of the irreducible components of $SS(j_!\sF)$ different from $\mathbb T^*_{\cA_c}\cA$ and $\bT^*_{\cA}\cA$. Hence, either we have $SS(j_!\sF)\times_\cA\cA_c=B$ or $SS(j_!\sF)\times_\cA\cA_c=B\bigcup\bT^*_{\cA_c}\cA$. It is sufficient to show that $B\bigcap \mathbb T^*_{\cA_c}\cA\subset \mathbb T^*_{\cA}\cA$.  By proposition \ref{invariant}, we can find an integer $r$ depending on $\sF|_{\cA_\eta}$, such that, for any positive integer $n$ co-prime to $r$ and any rational point $a\in \cA_{\eta}[n](\eta)$, there is an isomorphism $(+a)^*(\sF|_{\cA_\eta})\cong\sF|_{\cA_\eta}$.  We take an element $a$ in $\mathcal X=\bigcup_{(n,rp)=1}\cA_c[n](c)$. By the proof of proposition \ref{mainprop}, we have 
\begin{equation}\label{d+aSS=SS}
d(+a)(SS(j_!\sF|_{\cA_{C'}})\times_{\cA}\cA_c)=SS(j_!\sF|_{\cA_{C'}})\times_{\cA}\cA_c.
\end{equation}  
We have the following commutative diagram of vector bundles on $\cA_c$
\begin{equation}\label{bun+a}
\xymatrix{\relax
0\ar[r]&\bT^*_{\cA_c}\cA\ar[d]^{\phi}\ar[r]&\bT^*\cA\times_{\cA}\cA_c\ar[r]\ar[d]^{d(+a)}&\bT^*\cA_c\ar[r]\ar[d]^{d(+a)}&0\\
0\ar[r]&\bT^*_{\cA_c}\cA\ar[r]&\bT^*\cA\times_{\cA}\cA_c\ar[r]&\bT^*\cA_c\ar[r]&0}
\end{equation}
where the horizontal lines are exact sequences and $\phi$ is the restriction of $d(+a):\bT^*\cA\times_{\cA}\cA_c\longrightarrow \bT^*\cA\times_{\cA}\cA_c$ to $\bT^*_{\cA_c}\cA$, which is also an isomorphism. Hence 
$$
d(+a)(\bT^*_{\cA_c}\cA)=\bT^*_{\cA_c}\cA.
$$
Thus, \eqref{d+aSS=SS} implies that $d(+a)(B)=B$.  It is valid for any $a\in\mathcal X$. Suppose that the intersection of  $B$ and $\bT^*_{\cA_c}\cA$ is not contained in $\bT^*_{\cA}\cA$, i.e., there exists a closed point $x\in\cA_c$, such that $\mathbb T^*_{\cA,x}\cA\subseteq B_x$.  Thus, for any $a\in \mathcal X$, we have 
 $$\bT^*_{\cA_c,x-a}\cA=d(+a)(\bT^*_{\cA_c,x}\cA)\subset d(+a)(B)= B.$$ 
 Since $\mathcal X=\bigcup_{(n,rp)=1} \cA_c[n](c)$ is dense in $\cA_c$, we have 
$$
\bT^*_{\cA_c}\cA=\ol{\bigcup_{a\in \mathcal X} \bT^*_{\cA_c,x-a}\cA}.
$$ 
Hence, $\bT^*_{\cA_c}\cA\subseteq B$ which contradicts the fact that each irreducible component of $B$ is different from $\bT^*_{\cA_c}\cA$. This finishes the proof of proposition \ref{saitoapplication} $(i)$.\\ \indent
We now prove \ref{saitoapplication} $(ii)$. By Saito's conductor formula recalled in Theorem \ref{conductor}, we have
 \begin{align}\label{3}
\sum_{i=0}^{2g}(-1)^i\mathrm{sw}_c(R^if_{V*}\sF)= -(CC(j_!\sF), [df])_{\bT^{\ast}{\mathcal{A},\cA_c}}
\end{align}
where $df$ denotes a section of $\bT^*\cA$  as recalled in \ref{def df}.
Since $f:\cA\longrightarrow S$ is smooth, we have $df\bigcap \bT^*_{\cA}\cA=\emptyset$.
By applying \eqref{3} to the constant sheaf $\Lambda$ on $U$, we obtain that
\begin{equation*}
([\bT^{\ast}_{\cA_c}\mathcal{A}],[df])_{\bT^{\ast}{\mathcal{A},\cA_c}}=(-1)^{g+1}(CC(j_!\Lambda), [df])_{\bT^{\ast}{\mathcal{A},\cA_c}}=0.
\end{equation*}
Note that the image of the canonical map $\bT^*C\times_C\cA_c\to \bT^*\cA\times_{\cA}\cA_c$ is the conormal bundle $\bT^*_{\cA_c}\cA$. From $(i)$, for any irreducible component $T$ of $SS(j_!\sF)$ different from $\bT^*_{\cA_c}\cA$, we have $df\bigcap T=\emptyset$. In summary, we have $(CC(j_!\sF), [df])_{\bT^{\ast}{\mathcal{A},\cA_c}}=0$. From \eqref{3}, we deduce the sought-after vanishing \eqref{condform=0}.
\end{proof}

\subsection{}
In an earlier version of this article, the vanishing property of the conductor \eqref{condform=0} was proved in the relative dimension $1$ case by using Theorem \ref{genleal}. T. Saito then pointed out the remarkable fact \ref{invariant} to the authors. By using it, we could obtain the non-degeneracy of the ramification of \'etale sheaves on abelian schemes along a vertical divisor (Theorem \ref{mainprop}) as well as  \eqref{condform=0} without any restriction on the relative dimension.



\end{document}